\documentclass[12pt]{article}

\usepackage{epsfig}

\usepackage{caption}
\usepackage{amsfonts}

\usepackage{amssymb}
\usepackage{mathrsfs}
\usepackage{amsmath} 

\usepackage{graphicx} 
\usepackage{float}
\usepackage{pict2e}
\usepackage{enumerate}
\usepackage{enumitem}
\usepackage{authblk} 

\usepackage{graphics} 
\usepackage{tikz} 
\usepackage{ulem} 
\usepackage{cases}
\usetikzlibrary{calc}

\usepackage[f]{esvect}
\newcommand{\cevv}[1]{\reflectbox{\ensuremath{\vv{\reflectbox{\ensuremath{#1}}}}}}

\usepackage{tikz-3dplot}
\usetikzlibrary{hobby}

\usepackage{subcaption}

\usetikzlibrary{shapes.geometric}
\usetikzlibrary{decorations.pathmorphing,calligraphy}

\normalem 
\numberwithin{equation}{section}

\usepackage[pdfauthor={derajan},pdftitle={How to do this},pdfstartview=XYZ,bookmarks=true,
colorlinks=true,linkcolor=blue,urlcolor=blue,citecolor=blue,pdftex,bookmarks=true,linktocpage=true,hyperindex=true]{hyperref}

\usepackage{subdepth}

\usepackage{color}

\definecolor{darkgreen}{RGB}{0,154,23}
\definecolor{darkbrown}{RGB}{183,60,18}

\newcommand{\C}{\mathcal{C}}

\usepackage[colorinlistoftodos,prependcaption,textsize=scriptsize, textwidth=24mm,]{todonotes}

\usetikzlibrary{patterns}

\addtolength{\topmargin}{-.5in} \addtolength{\textheight}{1.5in}
\addtolength{\oddsidemargin}{-.6in}
\addtolength{\evensidemargin}{-.6in} \addtolength{\textwidth}{1.2in}

\makeatother

\begin{document} 
	\newtheorem{theorem}{Theorem}
	\newtheorem{observation}[theorem]{Observation}
	\newtheorem{corollary}[theorem]{Corollary}
	\newtheorem{algorithm}[theorem]{Algorithm}
	\newtheorem{definition}[theorem]{Definition}
	\newtheorem{guess}[theorem]{Conjecture}
	
	\newtheorem{problem}[theorem]{Problem}
	\newtheorem{question}[theorem]{Question}
	\newtheorem{lemma}[theorem]{Lemma}
	\newtheorem{proposition}[theorem]{Proposition}
	\newtheorem{fact}[theorem]{Fact}
	
	\newtheorem{claim}{Claim} 
	\newtheorem{claimA}{Claim}
	\renewcommand{\theclaimA}{1\alph{claimA}}
	\newtheorem{claimB}{Claim} 
	\renewcommand{\theclaimB}{2\alph{claimB}}
	\newtheorem{claimC}{Claim} 
	\renewcommand{\theclaimC}{3\alph{claimC}}
	
	\captionsetup[figure]{labelfont={bf},name={Fig.},labelsep=period}
	
	\makeatletter
	\newcommand\figcaption{\def\@captype{figure}\caption}
	\newcommand\tabcaption{\def\@captype{table}\caption}
	\makeatother

	\newtheorem{acknowledgement}[theorem]{Acknowledgement}
	
	\newtheorem{axiom}[theorem]{Axiom}
	\newtheorem{case}[theorem]{Case}
	\newtheorem{conclusion}[theorem]{Conclusion}
	\newtheorem{condition}[theorem]{Condition}
	\newtheorem{conjecture}[theorem]{Conjecture}
	\newtheorem{criterion}[theorem]{Criterion}
	\newtheorem{example}[theorem]{Example}
	\newtheorem{exercise}[theorem]{Exercise}
	\newtheorem{notation}{Notation} 
	\newtheorem{solution}[theorem]{Solution}
	\newtheorem{summary}[theorem]{Summary} 
	
	\newenvironment{proof}{\noindent {\bf
			Proof.}}{\rule{2.5mm}{2.5mm}\par\medskip}
	\newcommand{\remark}{\medskip\par\noindent {\bf Remark.~~}}
	
	\newcommand{\qed}{\null\nobreak\rule{0.6em}{0.6em}}

	\let\svthefootnote\thefootnote
	\newcommand\freefootnote[1]{%
		\let\thefootnote\relax%
		\footnotetext{#1}%
		\let\thefootnote\svthefootnote%
	}
	
	\title{\large{\bf Proof of a conjecture of Voss on bridges of longest cycles}}

	\author{Jie Ma\thanks{School of Mathematical Sciences, University of Science and Technology of China, Hefei, Anhui 230026, and Yau Mathematical Sciences Center, Tsinghua University, Beijing 100084, China. Research supported by National Key Research and Development Program of China 2023YFA1010201 and National Natural Science Foundation of China grant 12125106. Email: {\tt jiema@ustc.edu.cn}.}  \quad \quad Rongxing Xu\thanks{School of Mathematical Sciences, Zhejiang Normal University, Jinhua, Zhejiang, 321000, China. Research supported by National Natural Science Foundation for Young Scientists of China grant 12401472. Email: {\tt xurongxing@zjnu.edu.cn}.}}

	\maketitle
	
	\begin{abstract}
		Bridges are a classical concept in structural graph theory and play a fundamental role in the study of cycles. A conjecture of Voss from 1991 asserts that if disjoint bridges $B_1, B_2, \ldots, B_k$ of a longest cycle $L$ in a $2$-connected graph overlap in a tree-like manner (i.e., induce a tree in the {\it overlap graph} of $L$), then the total {\it length} of these bridges is at most half the length of $L$. Voss established this for $k \leq 3$ and used it as a key tool in his 1991 monograph on cycles and bridges. 
		In this paper, we confirm the conjecture in full via a reduction to a cycle covering problem.
	\end{abstract} 
	
	\section{Introduction}
	Let $G$ be a graph and $H$ a subgraph of $G$.  
	An \emph{$H$-bridge} of $G$ is either  
	(i) an edge in $E(G) \setminus E(H)$ with both endpoints in $V(H)$, or  
	(ii) a subgraph consisting of a component $D$ of $G - V(H)$ together with all edges between $V(D)$ and $V(H)$.
	For an $H$-bridge $B$, the vertices in $V(H) \cap V(B)$ are called the \emph{attachments} of $B$.
	In this paper, we often consider $H$ to be a cycle.

	The concept of bridges has naturally emerged in the development of graph theory, particularly in the study of cycles. 
	As emphasized by Bondy in his influential survey~\cite{GGL} (p.~58), ``bridges clearly play a very important role in the study of paths and circuits, and it can be argued that their role is central.''  
	
	A cornerstone result in the study of cycles is Tutte's theorem~\cite{Tutte1956}, which strengthens Whitney’s theorem~\cite{Whitney1931} by asserting that every $4$-connected planar graph is Hamiltonian.  
	A key ingredient in Tutte's celebrated proof is the so-called Bridge Lemma, which characterizes the bridges of certain cycles in planar graphs by their attachments.  
	Since then, bridges have facilitated numerous generalizations and refinements concerning Hamiltonicity, including results of Thomassen~\cite{Thomassen1983} (a small omission was corrected by Chiba and Nishizeki \cite{CN1986}), Thomas and Yu~\cite{TY1994,TY1997}, Kawarabayashi and Ozeki~\cite{KO2015}, as well as \cite{JY2002,OZ2018,Sanders1997,TYZ2005}.  
	Beyond Hamiltonicity, bridges have also played a central role in the study of longest cycles in general graphs, as explored in~\cite{BE1980,CSYZ2006,CY2002,CYZ2012,JW1992,WY2023}, among others.
	
	In light of these advances, an important direction of research has been to understand how cycles interact with their bridges and how these bridges are arranged along the cycle. 
	This leads to the notion of the \emph{overlap graph}. 
	Let $L$ be a cycle in a graph $G$. 
	Two $L$-bridges $B_1$ and $B_2$ are said to \emph{overlap} if $L$ cannot be partitioned into two subpaths $L_1$ and $L_2$ such that the attachments of $B_i$ lie entirely on $L_i$ for each $i=1,2$. 
	The \emph{overlap graph} of $G$ with respect to $L$, denoted $O_G(L)$, is the graph whose vertices correspond to the $L$-bridges, with an edge between two vertices if and only if the corresponding bridges overlap.
	Using this concept, Tutte~\cite{Tutte1959} gave a characterization of planar graphs, proving that a graph $G$ is planar if and only if, for every cycle $L$, the overlap graph $O_G(L)$ is bipartite. This characterization underlies most planarity-testing algorithms (see \cite{AP1961,Goldstein1963,HT1974}). 
	Voss~\cite{Voss1991} further showed that for any cycle $L$ in a $3$-connected graph $G$, the overlap graph $O_G(L)$ is connected.
	
	In his monograph~\cite{Voss1991}, Voss investigated various problems on cycles, with particular emphasis on the role of bridges.  
	To provide a measure on the size of a bridge, he~\cite{Voss1991} introduced the following parameter (which is called the span of a bridge in~\cite{GGL}).
	
	\begin{definition}\label{def-length}
		For a subgraph $H$ in a graph $G$, the {\bf length} $\lambda(B)$ of an $H$-bridge $B$ is the maximum number of edges in a tree within $B$ whose leaves are exactly the attachments of $B$.
	\end{definition}
	
	\noindent
	For a cycle $L$, an $L$-bridge has length one if and only if it is a chord of $L$.  
	Thomassen’s Chord Conjecture (see, e.g., \cite{AG1985,Bondy2014,Thomassen1989}) then asserts that every longest cycle $L$ in a $3$-connected graph contains such a bridge.  
	The conjecture remains open, with significant progress in~\cite{BM2008,KNZ2007,Thomassen1997,Zhang1987}.
	
	Voss~\cite{Voss1991} proposed the following conjecture on longest cycles $L$ in a graph $G$, aiming to provide quantitative control over the size of $L$-bridges relative to the length of $L$ (i.e., the {\it circumference} of $G$).  
	The conjecture is also discussed in Bondy’s comprehensive survey on cycles~\cite{GGL} (see Conjecture~5.11).

	\begin{conjecture}[Voss \cite{Voss1991}, p.~54]\label{main-conj}
		Let $G$ be a $2$-connected graph with a longest cycle $L$. 
		Let $B_1$, $B_2, \ldots, B_k$ be $L$-bridges that are pairwise vertex-disjoint and induce a tree in $O_G(L)$. Then
		$$\sum_{i=1}^{k} \lambda(B_i) \leq \lfloor |E(L)|/2 \rfloor.$$
	\end{conjecture}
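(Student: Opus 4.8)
\medskip\noindent\textbf{Proof plan.} The plan is to reduce the statement to a purely combinatorial cycle‑covering fact and then read off the inequality from the hypothesis that $L$ is longest. For each $i$ fix a tree $T_i\subseteq B_i$ whose leaf set is exactly the attachment set of $B_i$ and with $|E(T_i)|=\lambda(B_i)$; as the bridges are pairwise vertex‑disjoint, so are the $T_i$, and $W:=L\cup\bigcup_{i=1}^{k}T_i$ is a subgraph of $G$. I claim it is enough to produce an integer $t\ge 1$ and cycles $C_1,\dots,C_t$ in $W$ such that every edge of $\bigcup_i E(T_i)$ lies on at least two of them and every edge of $L$ lies on at least $t-1$ of them. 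Indeed, each $C_s$ is then a cycle of $G$, so $|E(C_s)|\le|E(L)|$, and summing lengths while using the two covering conditions gives
\[
2\sum_{i=1}^{k}\lambda(B_i)+(t-1)\,|E(L)|\ \le\ \sum_{s=1}^{t}|E(C_s)|\ \le\ t\,|E(L)|,
\]
so $2\sum_i\lambda(B_i)\le|E(L)|$, and since $\sum_i\lambda(B_i)$ is an integer this yields $\sum_i\lambda(B_i)\le\lfloor|E(L)|/2\rfloor$. Thus it remains to build such a family of cycles for the configuration $(L;T_1,\dots,T_k)$, using only that $B_1,\dots,B_k$ overlap in a tree.

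The case $k=1$ is easy and models the rest. Writing the attachments as $v_1,\dots,v_m$ in cyclic order along $L$, letting $P_i$ be the arc of $L$ between $v_i$ and $v_{i+1}$ (indices mod $m$), and letting $R_i$ be the unique path of $T_1$ from $v_i$ to $v_{i+1}$, the set $C_i:=R_i\cup(L-P_i)$ is a cycle (as $T_1$ meets $L$ only in $\{v_1,\dots,v_m\}$). An edge $e$ of $T_1$ lies on $C_i$ exactly when $v_i$ and $v_{i+1}$ lie on opposite sides of $e$ in $T_1$, and the number of such $i$ equals the number of sign changes around the cyclic $\pm$‑pattern recording on which side of $e$ each $v_j$ lies, which is even and positive, hence $\ge 2$; an edge of $P_\ell$ lies on $C_i$ precisely for $i\ne\ell$, i.e.\ on $m-1$ of the cycles. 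So $C_1,\dots,C_m$ works with $t=m$, and the reduction already yields the conjecture for $k=1$.

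For general $k$ I would induct along the overlap tree $\mathcal{T}$ of $B_1,\dots,B_k$. Take a leaf $B_k$ of $\mathcal{T}$ with neighbour $B_p$. Since $\mathcal{T}$ is an \emph{induced} tree in $O_G(L)$, the bridge $B_k$ overlaps $B_p$ but no other $B_j$; consequently, relative to the attachments of the bridges other than $B_p$, the attachments of $B_k$ are confined to one region of $L$, inside which they interleave only with those of $B_p$. Applying the induction hypothesis to $B_1,\dots,B_{k-1}$ gives cycles covering $\bigcup_{i<k}E(T_i)$ at least twice and $E(L)$ at least $t'-1$ times; one then splices $T_k$ into this family locally, rerouting through $T_k$ those portions of the existing cycles that run along the region occupied by the attachments of $B_k$ (and inserting a few further cycles of the $k=1$ type built from $T_k$ and short arcs), so that each edge of $T_k$ acquires coverage $\ge 2$ while no edge of $L$ loses too much coverage.

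The hard part---and the real content of the proof---is exactly this splicing step: one must pin down how a bridge that overlaps precisely one other bridge sits along $L$, how the inductively‑built cycles enter, cross and leave that part of $L$, and then verify that after the surgery every modified object is still a single cycle and that both covering inequalities still hold. Note that $W$ need not be planar (already a single ``double star'' bridge with interleaved attachments forces a $K_{3,3}$ in $W$), so no appeal to planar face‑coverings is available; it is precisely here that one needs $O_G(L)$ restricted to $\{B_1,\dots,B_k\}$ to be a \emph{tree} rather than merely bipartite. The remaining ingredients---the reduction in the first paragraph and the base case---are routine.
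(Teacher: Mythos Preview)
Your reduction and the $k=1$ base case are correct and close in spirit to the paper's approach: the paper also replaces each $B_i$ by a maximal tree $T_i$ and proves a cycle--covering lemma on $L\cup\bigcup_i T_i$. (The paper works with the ``small'' cycles $C$ and uses that $L\triangle C$ is a cycle of length $\le|E(L)|$, whereas you work with the complementary cycles $L\triangle C$ directly; these are equivalent viewpoints.)

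The genuine gap is the inductive step, and the specific invariant you propose will not survive it. Your condition ``every edge of $L$ lies on at least $t-1$ of the $t$ cycles'' says that the sets $E(L)\setminus E(C_s)$ are pairwise disjoint. When you peel off the leaf bridge $B_k$ and then ``splice in'' new cycles for $T_k$ (say $m$ cycles of your $k=1$ type), the total becomes $t=t'+m$, but an edge of $L$ is now missed by up to one old cycle \emph{and} one new cycle, so you only get coverage $\ge t-2$, and after $k$ rounds this degrades to $\ge t-k$, which yields nothing. The vaguer ``rerouting'' you mention cannot repair this without a much finer control on how the inductively built cycles meet the region of $L$ occupied by $T_k$'s attachments. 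This is exactly where the paper invests its effort: it proves a covering theorem with a \emph{fixed} invariant independent of $t$ --- every edge of $L$ in \emph{exactly} two cycles, every tree edge in at least four, and $L\triangle C$ a cycle for every $C$ --- and, crucially, adds two further structural conditions (each cycle meets each bridge in zero or two attachments; for each attachment and each of four ``types'' there is exactly one cycle of that type). These extra conditions are what make the induction go: they let one identify specific pairs of cycles in the two inductive subproblems whose symmetric difference is again a single cycle with the right properties, and they guarantee that deleting/merging finitely many cycles restores the exact-two count on $E(L)$. Your plan has the right architecture, but the invariant you wrote down is not the one that can be carried through the splice; finding an inductively stable invariant is the real content of the proof.
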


	The cases $k \leq 3$ were established by Voss himself~\cite{Voss1991}, who used them as key tools in his study of problems and properties of longest cycles (see Chapters 3, 7, and 11 of~\cite{Voss1991}).
	
	We note that, if true, the inequality is best possible.  
	For $k=1$, consider $G=K_{2,3}$, where the longest cycle has length $4$ and its unique bridge has length $2$.  
	For $k \geq 2$, let $G$ be the graph obtained from a $2k$-cycle $L = v_1v_2\cdots v_{2k}v_1$ by adding the edge $v_1v_{k+1}$ and, for each $i \in \{2,3,\ldots,k\}$, the edge $v_iv_{2k+2-i}$. 
	Then $G$ has exactly $k$ $L$-bridges whose total length is $k$, which equals half of $|E(L)|=2k$.
	We also remark that the conjecture fails if the subgraph of $O_G(L)$ induced by $B_1,B_2,\ldots,B_k$ is disconnected or contains a cycle, as shown in Fig.~\ref{fig:2examples}.  
	To be precise, we see that in the left graph, $O_G(L)$ is disconnected and $\lambda(B_1)+\lambda(B_2)=4 > |E(L)|/2=3$, while in the right graph (the Petersen graph), $O_G(L)$ contains a triangle and $\lambda(B_1)+\lambda(B_2)+\lambda(B_3)=5 > |E(L)|/2=9/2$.
	
	\begin{figure}[H]
		\begin{minipage}[t]{0.49\textwidth}
			\centering
			\begin{tikzpicture}[>=latex,
				roundnode/.style={circle, draw=black, fill=white, minimum size=1.5mm, inner sep=0pt}]
				\draw ($(0,0)+(0:2 and 1.3)$) arc (0:360:2 and 1.3); 
				\node [roundnode] (A1) at ($(0,0)+(75:2 and 1.3)$){}; 
				\node [roundnode] (A2) at ($(0,0)+(105:2 and 1.3)$){};
				\node [roundnode] (A3) at ($(0,0)+(180:2 and 1.3)$){};
				\node [roundnode] (A4) at ($(0,0)+(255:2 and 1.3)$){};
				\node [roundnode] (A5) at ($(0,0)+(285:2 and 1.3)$){};
				\node [roundnode] (A6) at ($(0,0)+(0:2 and 1.3)$){};
				\draw(A2)--(A4);
				\draw(A1)--(A5);
				\node [roundnode] at (-0.51,0) {};
				\node at (-1,0) {$B_1$};
				\node [roundnode] at (0.51,0) {};
				\node at (1,0) {$B_2$};
				\node at (2.4,0) {$G$};
				\node at (1.7,1) {$L$};
			\end{tikzpicture} 
		\end{minipage}  
		\begin{minipage}[t]{0.49\textwidth}
			\centering
			\begin{tikzpicture}[>=latex,	
				blacknode/.style={circle, draw=black, fill=white, minimum size=1.5mm, inner sep=1pt}] 
				\draw (0,0) circle(1.5cm);
				\node [blacknode] (A1) at (90:1.5cm){};
				\node [blacknode] (A2) at (130:1.5cm){};
				\node [blacknode] (A3) at (170:1.5cm){};
				\node [blacknode] (A4) at (210:1.5cm){};
				\node [blacknode] (A5) at (250:1.5cm){};
				\node [blacknode] (A6) at (290:1.5cm){};	
				\node [blacknode] (A7) at (330:1.5cm){};
				\node [blacknode] (A8) at (10:1.5cm){};
				\node [blacknode] (A9) at (50:1.5cm){};  
				\node [blacknode] (u) at (0,0){}; 
				\draw(u)--(A1);
				\draw (A7)--(u)--(A4);
				\draw(A9) to [bend left=30] (A5);
				\draw(A2) to [bend right=30] (A6);
				\draw(A3) to [bend left=30] (A8);
				\node at (0.35,0.1) {$B_1$};
				\node at (-0.55,0.88) {$B_2$};
				\node at (-1.1,0) {$B_3$};
				\node at (1.8,0) {$G$};
				\node at (1.5,1) {$L$};
			\end{tikzpicture} 
		\end{minipage} 
		\caption{Two examples for Conjecture~\ref{main-conj}} 
	\end{figure}
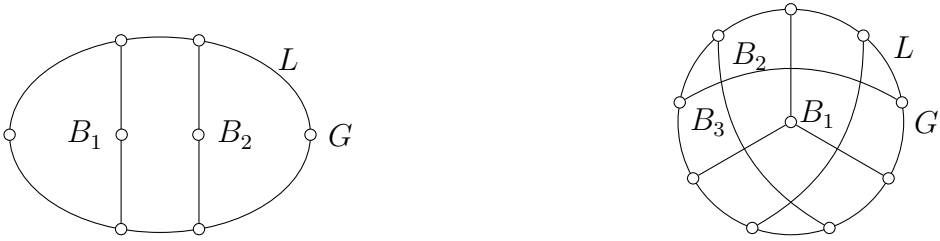\label{fig:2examples}

	The main result of this paper resolves Conjecture~\ref{main-conj} completely.
	
	\begin{theorem}\label{thm:main}
		Conjecture~\ref{main-conj} holds for all positive integers $k$.  
	\end{theorem}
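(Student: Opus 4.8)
The plan is to pass to a configuration in which every bridge is a tree, recast the desired inequality as the existence of a suitable family of cycles through $H:=L\cup B_1\cup\cdots\cup B_k$, and construct that family by induction on $k$ using the structure of the overlap tree. First I would reduce to trees. For each $i$ fix a tree $T_i\subseteq B_i$ with $\lambda(B_i)$ edges whose leaf set is exactly the attachment set of $B_i$, as permitted by Definition~\ref{def-length}, and put $H=L\cup T_1\cup\cdots\cup T_k$. Then $H\subseteq G$ and $L\subseteq H$, so $L$ is still a longest cycle of $H$; the $L$-bridges of $H$ are precisely $T_1,\dots,T_k$; they are pairwise vertex-disjoint and have the same attachments as the $B_i$, so they induce in $O_H(L)$ the same tree $\mathcal{T}$ that $B_1,\dots,B_k$ induce in $O_G(L)$. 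Since $2$-connectivity of $G$ forbids a bridge with a single attachment, each $\lambda(B_i)=|E(T_i)|\ge 1$. Hence it suffices to prove $\sum_i|E(T_i)|\le\lfloor|E(L)|/2\rfloor$; that is, we may assume every bridge is a tree.

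Next I would make the reduction to a \emph{cycle-covering problem}. Given a tree $T_i$ with attachments $a^i_1,\dots,a^i_{m_i}$ in their cyclic order along $L$, let $P^i_j$ be the path of $T_i$ from $a^i_j$ to $a^i_{j+1}$ (indices mod $m_i$). Doubling the edges of $T_i$ makes it Eulerian, and a closed boundary walk meeting the attachments in this cyclic order breaks into $P^i_1,\dots,P^i_{m_i}$, so $\sum_j|E(P^i_j)|\ge 2\lambda(B_i)$ (with equality once $T_i$ is drawn in a disc with its leaves on the boundary in this order, which a routine further reduction arranges and which is in any case unnecessary for the inequality). The heart of the argument is the claim that there is a finite family $\mathcal{C}=\{C_1,\dots,C_t\}$ of cycles of $H$, where outside $L$ each $C_r$ is the edge-disjoint union of certain of the paths $P^i_j$, such that \emph{every $P^i_j$ is used by exactly one $C_r$} and \emph{every edge of $L$ lies on all members of $\mathcal{C}$ but one}. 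Granting this, $L$ being a longest cycle of $H$ gives $|E(C_r)|\le|E(L)|$ for each $r$, so
\[
t\,|E(L)|\ \ge\ \sum_{r=1}^{t}|E(C_r)|\ =\ \sum_{r=1}^{t}\bigl(|E(C_r)\cap E(L)|+|E(C_r)\setminus E(L)|\bigr)\ =\ (t-1)|E(L)|+\sum_{i,j}|E(P^i_j)| ,
\]
using the ``missed by exactly one cycle'' property for the first summand and the partition property for the second. Thus $2\sum_i\lambda(B_i)\le\sum_{i,j}|E(P^i_j)|\le|E(L)|$, and taking the floor completes the proof.

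It remains to build $\mathcal{C}$, by induction on $k$. For $k=1$ one takes $\mathcal{C}=\{(L-A^1_j)\cup P^1_j:1\le j\le m_1\}$, where $A^1_j$ is the arc of $L$ between $a^1_j$ and $a^1_{j+1}$: each is a cycle, each $P^1_j$ is used once, and an edge lying on $A^1_j$ is missed precisely by the $j$-th cycle. For $k\ge2$ I would remove a leaf $B_k$ of $\mathcal{T}$, whose unique $\mathcal{T}$-neighbour is some $B_j$. Because $B_k$ overlaps no $B_i$ with $i\ne j,k$ and the bridges are vertex-disjoint, for each such $i$ all attachments of $B_k$ lie strictly within one arc between consecutive attachments of $B_i$, so $B_k$ is \emph{localised}; meanwhile its attachments interleave those of $B_j$. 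Applying the inductive hypothesis to $H-T_k$ produces a family $\mathcal{C}'$, which I would then modify only in the stretch of $L$ occupied by $B_k$: splice $P^k_1,\dots,P^k_{m_k}$ into (or between) the cycles of $\mathcal{C}'$ running through that stretch, using the interleaving with $B_j$ to route the new excursions so that no edge of $L$ becomes missed twice.

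The substantive difficulty is entirely this last step: carrying out the local surgery that absorbs the localised leaf bridge while preserving both global properties of $\mathcal{C}$. To make it go through I expect one must strengthen the inductive statement — recording for each cycle of the family exactly which arc of $L$ it misses and through which ``part'' of $H$ it makes its excursions, equivalently fixing a plane embedding of $H$ in which $L$ bounds a disc and the bridges lie inside or outside according to the bipartition of $\mathcal{T}$, and describing $\mathcal{C}$ relative to it — so that a leaf of $\mathcal{T}$ can be handled by a purely local modification. Everything else, namely the reductions and the base case above, is routine.
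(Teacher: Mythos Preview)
Your reduction to trees $T_i$ and your passage to a cycle-covering problem on $H=L\cup T_1\cup\cdots\cup T_k$ are correct and match the paper's strategy precisely; the double-counting argument you give is the same one the paper uses (cf.\ the derivation of Theorem~\ref{thm:main} from Theorem~\ref{main-thm1}), and your base case $k=1$ is the paper's base case up to replacing each cycle $C$ by $L\triangle C$.

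The gap is exactly where you locate it: the inductive step. Two things differ from the paper. First, your target family --- each $P^i_j$ used exactly once, each edge of $L$ missed by exactly one cycle --- is more rigid than the paper's (each $L$-edge in exactly two cycles, each tree-edge in at least four, and $L\triangle C$ a cycle for every $C$; Theorem~\ref{main-thm1}). The looser ``at least four'' is important: during the induction coverage of tree-edges may increase, and one never needs to force it back down to a prescribed value. Second, and more seriously, your proposed strengthening (record the missed arc, fix a planar embedding) is not the device that works. The paper passes to \emph{directed} cycles and imposes, at every attachment $x$, a local ``type'' invariant (Theorem~\ref{main-thm2}, condition~(C4)): for each of the four ways a dicycle can enter and leave $x$, exactly one dicycle in the family realises that pattern; together with~(C3) (each dicycle meets each bridge in zero or two attachments) this is what lets two inductively obtained families be merged, because one knows exactly which dicycles pass through a prescribed attachment and in which direction.

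The shape of the induction is also different from ``remove a leaf and splice''. The paper splits the unique overlap-neighbour $T_2$ of the leaf bridge into two overlapping subtrees $T_{21},T_{22}$, builds two smaller instances, and merges their dicycle families by deleting or taking symmetric differences of a few specific dicycles pinpointed via~(C4). In two of the three cases this requires first subdividing an edge of $L$ and attaching an auxiliary pendant edge to $T_2$ so that each subinstance still has a tree as its overlap graph --- a manoeuvre your sketch does not anticipate and which the ``localised leaf, purely local surgery'' picture does not suggest.
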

	
	It is worth emphasizing that we prove this result by reducing the problem to one involving specified cycle coverings, which we then solve.  
	To state this result formally, we need to introduce one more concept: the \emph{symmetric difference} of two subgraphs $H_1$ and $H_2$ of a graph $G$, denoted $H_1 \triangle H_2$, which is the subgraph induced by the edge set $E(H_1) \triangle E(H_2)$.  
	The following is the corresponding result on cycle coverings. 
	
	\begin{theorem}
		\label{main-thm1}
		Let $L$ be a cycle in a $2$-connected graph $G$, and let $T_1, T_2, \ldots, T_k$ be $L$-bridges that are pairwise vertex-disjoint and induce a tree in $O_G(L)$, with each $T_i$ being a tree whose leaves are exactly the attachments of $T_i$. Then there exists a collection $\mathcal{C}$ of cycles in $G$ such that each edge of $L$ lies in exactly two cycles of $\mathcal{C}$, each edge not in $L$ lies in at least four cycles of $\mathcal{C}$, and for each cycle $C \in \mathcal{C}$, $L \triangle C$ is a cycle.
	\end{theorem}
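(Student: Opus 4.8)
The plan is to construct $\mathcal{C}$ explicitly: first dispose of the case of a single bridge cleanly, then weave the bridges together along the overlap tree. First I would replace $G$ by $H := L \cup T_1 \cup \dots \cup T_k$; then the $T_i$ are exactly the $L$-bridges of $H$, so $O_H(L)$ is the given tree, and $H$ is $2$-connected (deleting a vertex of $L$ leaves $L$ connected with every $T_i$ still attached by its remaining leaves, and deleting an interior vertex of some $T_i$ splits $T_i$ into subtrees each of which still meets $L$). Two structural remarks: every cycle of $H$ uses an edge of $L$, since a cycle avoiding $E(L)$ would be a connected subgraph of the vertex-disjoint trees $T_1,\dots,T_k$, hence contained in one of them, which is impossible; and if $C$ is a cycle with $L\triangle C$ a cycle then, writing $P=E(C)\cap E(L)$, $R=E(L)\setminus E(C)$ and $S=E(C)\setminus E(L)$, one checks that $P$ and $R$ are complementary unions of arcs of $L$, that $S$ is a set of paths internally disjoint from $L$ pairing up the endpoints of those arcs, and that $P\cup S$ and $R\cup S$ are each a single cycle; conversely such data produces a valid cycle $C=P\cup S$ with partner $L\triangle C=R\cup S$. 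Two special cases will be used: $C=A\cup Q$ with $A$ an arc of $L$ and $Q$ a path of a single bridge joining two of its attachments; and \emph{weaving} cycles, where $S$ consists of bridge-paths through several bridges whose attachments interleave on $L$, which exist precisely when those bridges overlap.

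For $k=1$, let $a_1,\dots,a_r$ be the attachments of $T_1$ listed in the cyclic order in which they occur along $L$, let $Q_i$ be the path of the tree $T_1$ from $a_i$ to $a_{i+1}$ (indices mod $r$), and let $A_i$ be the arc of $L$ from $a_i$ to $a_{i+1}$ containing no other attachment. Then $C_i:=A_i\cup Q_i$ is a cycle (the interior of $Q_i$ avoids $L$ and the interior of $A_i$ avoids $T_1$), $L\triangle C_i=\overline{A_i}\cup Q_i$ is again a cycle, the arcs $A_1,\dots,A_r$ partition $E(L)$, and -- the key point -- every edge $e$ of $T_1$ lies on an even number, hence at least two, of the paths $Q_i$: deleting $e$ splits the attachments into two nonempty sets, and the number of $i$ with $a_i,a_{i+1}$ on opposite sides is the number of sign changes of a cyclic $0/1$-sequence containing both symbols. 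Therefore $\mathcal C:=\{C_1,C_1,C_2,C_2,\dots,C_r,C_r\}$ covers each edge of $L$ exactly twice, each edge of $T_1$ at least four times, and consists of valid cycles.

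For $k\ge 2$ I would induct along the overlap tree. Taking the union $\bigcup_i \mathcal{C}_0^{(i)}$ of the one-bridge collections $\mathcal{C}_0^{(i)}$ built as above covers each edge of $L$ exactly $k$ times and each bridge-edge at least twice, so the task is to compress the $L$-multiplicity down to $2$ while keeping every bridge-edge on at least two cycles (a final doubling then yields four). The mechanism is \emph{merging}: since two tree-adjacent bridges $T_t,T_s$ overlap, their attachments interleave on $L$, so a cycle $A_i^{(t)}\cup Q_i^{(t)}$ of $\mathcal{C}_0^{(t)}$ together with a suitable cycle of $\mathcal{C}_0^{(s)}$ can be replaced by one or two weaving cycles that run through a path of $T_t$ and a path of $T_s$ and contribute exactly the same edges of $L$ as the two replaced cycles; this trades two units of $L$-coverage coming from two different bridges for one. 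Rooting the overlap tree and repeatedly merging every non-root bridge into its parent collapses the $k$-fold cover of $L$ to a single cover, after which every edge of $L$ lies on one cycle and every bridge-edge on at least two; doubling completes the construction. In inductive language: delete a leaf bridge $T_k$ of the overlap tree with tree-neighbour $T_j$, apply the statement to $H-(V(T_k)\setminus V(L))$, and re-route through $T_k$ the controlled family of cycles of the resulting collection that already traverse the stretch of $L$ interleaved with the attachments of $T_k$, using the $T_j$--$T_k$ overlap.

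I expect the merging/re-routing step to be the main obstacle: one must show that the overlaps certified by the edges of the tree are \emph{enough} to bring the $L$-multiplicity to exactly $2$ -- never over- or under-covering an edge of $L$, never letting a bridge-edge fall below the target -- and to do this coherently over the whole tree. This is precisely where both hypotheses enter: the overlap graph being a tree supplies a conflict-free order in which to perform the merges, and each bridge being a tree makes the consecutive-attachment paths $Q_i$ well-defined and delivers the even-covering property; dropping either hypothesis breaks the conclusion, exactly as the two introductory examples show. Finally, Theorem~\ref{thm:main} drops out by a short double count on the collection $\mathcal{C}$: summing $|E(L\triangle C)|\le|E(L)|$ over $C\in\mathcal{C}$ and comparing with the coverage numbers ($2$ on $E(L)$, at least $4$ off $L$) yields $\sum_i\lambda(B_i)\le\lfloor|E(L)|/2\rfloor$.
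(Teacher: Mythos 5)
Your reduction to $H=L\cup T_1\cup\cdots\cup T_k$, your base case $k=1$ (arcs between consecutive attachments plus tree paths, each taken twice, with the even-sign-change argument giving multiplicity at least $4$ on tree edges), and your closing double count are all correct and essentially identical to what the paper does. The problem is the step you yourself flag as ``the main obstacle'': the merging/re-routing of the one-bridge families along the overlap tree is not a proof, and the difficulties it waves at are precisely what the entire body of the paper is devoted to. Concretely: (a) to know \emph{which} cycles of the sub-collections to delete or merge, one needs much finer control than ``each edge of $L$ is covered the right number of times'' --- the paper is forced to strengthen the statement to Theorem~\ref{main-thm2}, working with \emph{directed} cycles and imposing the invariants \ref{C3} and \ref{C4} (every dicycle meets each bridge in $0$ or $2$ attachments, and at each attachment there is exactly one dicycle of each of the four local types); without such an invariant you cannot even identify a ``suitable cycle of $\mathcal{C}_0^{(s)}$'' to pair with $A_i^{(t)}\cup Q_i^{(t)}$, nor guarantee that the replacement $C$ still satisfies $L\triangle C$ being a single cycle. (b) Peeling off a leaf bridge does not leave a configuration satisfying the hypotheses: the paper must split the neighbouring bridge $T_2$ into two subtrees $T_{21},T_{22}$ and, in two of its three cases, \emph{modify the graph} (subdivide an edge of $L$ and add an artificial edge to a new attachment $a$) just so that both pieces are legitimate instances of the induction; your sketch of ``apply the statement to $H-(V(T_k)\setminus V(L))$'' ignores this. (c) When two cycles are merged by symmetric difference, the bridge edges on their common path lose two units of coverage; the paper needs the claims that the paths $Q_1,\dots,Q_4$ are pairwise nearly edge-disjoint to keep the count at $\ge 4$. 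None of this bookkeeping is present in your proposal.

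There is also a structural reason to doubt your specific intermediate target. You aim first for a collection covering each edge of $L$ \emph{exactly once} and each bridge edge at least twice, and then double it. Already for $k=2$ the paper's construction does not decompose into two such single covers: its weaving cycles (items \ref{s23} and \ref{s24} of the $s=2$ step) form a closed necklace in which each ``$u$-to-$v$'' arc of $L$ lies on exactly two consecutive weaving cycles, so extracting a single cover from them amounts to finding a perfect matching in an odd cycle when the number of alternation blocks is odd. So the double cover appears to be genuinely indecomposable, and the ``collapse to a single cover, then double'' strategy is likely not realizable as stated. In short, the skeleton is right and the base case is sound, but the inductive step --- the actual content of the theorem --- is missing.
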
 
	
	Since the proof is short, we present this reduction immediately.
	
	\medskip
	
	\noindent{\bf Proof of Theorem~\ref{thm:main}, assuming Theorem~\ref{main-thm1}.}
	Let $B_1, B_2, \dots, B_k$ be $L$-bridges that satisfy the conditions of Theorem~\ref{thm:main}.
	For each $L$-bridge $B_i$, let $T_i$ be the largest tree in $B_i$ whose leaves are the attachments of $B_i$. 
	Define $H$ as the union of $L$ and $T_i$ for all $1\leq i \leq k$, which is 2-connected.
	By Theorem~\ref{main-thm1}, there exists a family $\mathcal{C}$ of cycles for $H$ as described. 
	Since $L$ is a longest cycle in $G$ and $L \triangle C$ is a cycle for every $C \in \mathcal{C}$, we have $|E(L \triangle C)| \leq |E(L)|$. Note that $|E(L \triangle C)| = |E(L) \setminus E(C)| + |E(C) \setminus E(L)|$ and $|E(L)| = |E(L) \setminus E(C)| + |E(L) \cap E(C)|$. Thus $|E(C) \setminus E(L)| \leq |E(L) \cap E(C)|$ for every $C \in \mathcal{C}$. Observe that $E(C) \setminus E(L) = \bigcup_{i=1}^{k} E(C) \cap E(T_i)$ and by the assumption, $B_1, B_2, \ldots, B_k$ are pairwise vertex-disjoint, so we have $\sum_{i=1}^{k} |E(C) \cap E(T_i)| = |E(C) \setminus  E(L)| \leq  |E(L) \cap E(C)|$ for every $C \in \mathcal{C}$.
	
	From the properties of the cycle family $\mathcal{C}$, each edge in $L$ lies in exactly two cycles of $\mathcal{C}$, and each edge in $\bigcup_{i=1}^{k} T_i$ lies in at least four. Therefore, we can derive that
	\begin{equation*} 
		4 \sum_{i=1}^{k} \lambda(B_i) = 4 \sum_{i=1}^{k} |E(T_i)| \leq \sum_{C \in \mathcal{C}} \sum_{i=1}^{k} |E(C) \cap E(T_i)| \leq \sum_{C \in \mathcal{C}} |E(C) \cap E(L)| = 2|E(L)|,
	\end{equation*}   
	which finishes the proof of Theorem~\ref{thm:main}. \qed
	
	\medskip
	
	The remainder of the paper is organized as follows. 
	In Section~\ref{sec-stronger thm}, we prove Theorem~\ref{thm:main} using 
	Theorem~\ref{main-thm1} and then present a slightly stronger version of 
	it that provides additional information on the cycle 
	family for inductive arguments. 
	Section~\ref{sec-pre} contains preliminary results needed for this stronger 
	version (Theorem~\ref{main-thm2}). 
	In Section~\ref{sec-proof}, we give the full proof of Theorem~\ref{main-thm2}. 
	Finally, we discuss some related open problems in the last section.

	\section{A strengthened version of Theorem~\ref{main-thm1}}\label{sec-stronger thm}
	
	Let $L$ be a cycle in a graph $G$. A natural idea for proving Theorem~\ref{main-thm1} is to use induction on the number of $L$-bridges. In the base case, where there are only one or two $L$-bridges, one can explicitly construct the corresponding family of cycles. On the other hand, for the case with at least three $L$-bridges, one may consider using induction hypothesis on $(G_1, L)$ and $(G_2, L)$, where $G_1$ and $G_2$ are subgraphs of $G$ with less $L$-bridges than $G$. This gives two families of cycles, $\mathcal{C}_1$ and $\mathcal{C}_2$, respectively. Then one can construct a new family $\mathcal{C}$ of cycles, satisfying the theorem, based on those in $\mathcal{C}_1 \cup \mathcal{C}_2$. However, to ensure that the induction works, both $O_L(G_1)$ and $O_L(G_2)$ must be trees. So we have to add some edges for certain cases in this process. Then, in the construction of $\mathcal{C}$, we also need  to address the cycles which contain the new edges. We will either delete these cycles or combine some pair of cycles by taking their symmetric difference. This necessitates a deeper understanding of the cycles involved, which motivates us to consider directed cycles instead. 
	
	We begin by defining four types of directed cycles (or, briefly, \emph{dicycles}), which play a central role in this paper. 
	Throughout, we denote by $\vv{H}$ a directed graph whose underlying graph is $H$. 
	For two vertices $x$ and $y$, we write $(x,y)$ (or $x \to y$) to denote an arc from $x$ to $y$, that is, an arc with tail $x$ and head $y$.

	Let $L$ be a cycle in a graph $G$, whose vertices are arranged in cyclic order (often assumed {\it clockwise} in a planar drawing of $L$).
	We denote by $v^+$ (resp. $v^-$) the next (resp. previous) vertex of $v$ on $L$ in this order.  Let $C$ be a cycle of $G$ that contains an edge $ab$, where $a$ is a vertex on $L$ and $b$ is not. We say that a dicycle $\vv{C}$ is of  \emph{type $ij$} with respect to $a$ if the following conditions are satisfied.
	\begin{itemize}
		\item $ij=00$: $(b,a)$ and $(a,a^+)$ are in $\vv{C}$; 
		\item $ij=01$: $(a,b)$ and $(a^+,a)$ are in $\vv{C}$;
		\item $ij=10$: $(b,a)$ and $(a,a^-)$ are in $\vv{C}$;
		\item $ij=11$: $(a,b)$ and $(a^-,a)$ are in $\vv{C}$.
	\end{itemize} 
	For convenience, we interpret $i=0$ (resp., $i=1$) as indicating that the dicycle contains $a^+$ (resp., $a^-$). Similarly, $j=0$  (resp., $j=1$) indicates that the dicycle contains the arc $(b,a)$ (resp., $(a,b)$). Note that for each attachment $v$ and $ij \in \mathbb{Z}_2^{2}$, it is possible that there are different dicycles of type $ij$ with respect to $v$.

	Given a graph $G$, let $\C(G)$ be the family of all possible dicycles of $G$. For a subfamily $\C \subseteq \C(G)$ and an edge $e \in E(G)$, let $$\mathcal{C}_e = \{\vv{C} \in \C: e\in E(C)\}.$$
	
	We are now ready to present the strengthened form of Theorem~\ref{main-thm1}, the proof of which will occupy the remainder of the paper.

	\begin{theorem}
		\label{main-thm2}
		Let $L$ be a cycle in a $2$-connected graph $G$ and  let $T_1, T_2, \ldots, T_s$ be all the $L$-bridges that are pairwise vertex-disjoint and induce a tree in $O_G(L)$, with each $T_i$ being a tree. Then there exists a subfamily $\mathcal{C} \subseteq \C(G)$ such that the following conditions hold.
		\begin{enumerate}[label= {(C\arabic*)}] 
			\item \label{C1} $|\mathcal{C}_e|=2$ for every $e \in E(L)$ and $|\mathcal{C}_e|\geq 4$ for every $e \in E(G)\setminus E(L)$. 
			\item\label{C2} For each dicycle in $\mathcal{C}$ with underlying graph $C$,  $L \triangle C$ is a cycle.
			\item \label{C3} Every dicycle in $\mathcal{C}$ contains either no or exactly two attachments of each $L$-bridge.
			\item \label{C4} For any $ij \in \mathbb{Z}_2^2$ and any vertex $x$ which is an attachment of some $L$-bridge, there is exactly one dicycle in $\mathcal{C}$ which is of type $ij$ with respect to $x$.
		\end{enumerate}   
	\end{theorem}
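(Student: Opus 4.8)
The plan is to prove Theorem~\ref{main-thm2} by induction on the number $s$ of $L$-bridges: construct the family $\mathcal{C}$ explicitly when $s\le 2$, and when $s\ge 3$ split the pair $(G,L)$ into two pairs with fewer $L$-bridges, apply the induction hypothesis to each, and stitch the two resulting families together. The point of working with directed cycles and the four types, rather than with undirected cycles, is precisely to make this stitching controllable: when two cycles are merged by a symmetric difference the result may be a single cycle or a disjoint union of several cycles, and which neighbour of an attachment $x$ on $L$ a dicycle uses and in which direction it traverses the incident bridge edge --- exactly the data encoded by the type $ij$ with respect to $x$ --- is what one needs in order to predict and engineer the outcome. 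Condition~\ref{C4}, forcing all four types to occur exactly once at every attachment, is the invariant that guarantees a partner of the required type is always available.

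\emph{Base cases.} For $s=1$ let $a_1,\dots,a_m$, with $m\ge 2$ (every $L$-bridge of a $2$-connected graph has at least two attachments), be the attachments of $T_1$ in cyclic order along $L$; for each $i$ let $P_i$ be the path in the tree $T_1$ joining $a_i$ to $a_{i+1}$, and let $A_i$ be the arc of $L$ from $a_i$ to $a_{i+1}$ containing no other attachment; take $\mathcal{C}$ to consist of the two orientations of each cycle $P_i\cup A_i$. Then \ref{C2} holds because $L\triangle(P_i\cup A_i)$ merely replaces $A_i$ by $P_i$; \ref{C3} holds because $P_i\cup A_i$ meets $T_1$ in exactly the attachments $a_i,a_{i+1}$; the arcs $A_i$ partition $E(L)$, so each edge of $L$ lies in exactly two of these $2m$ dicycles, while each edge of $T_1$, separating the leaves $a_1,\dots,a_m$ of $T_1$ into two nonempty classes, lies in $P_i$ for the at least two values of $i$ at which the cyclic sequence crosses between the classes --- giving \ref{C1}; and the four dicycles through $a_i$, the two orientations of $P_{i-1}\cup A_{i-1}$ and of $P_i\cup A_i$, realise the four types with respect to $a_i$ --- giving \ref{C4}. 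For $s=2$ one carries out a similar but more delicate explicit construction, now using paths through the two overlapping bridges and arcs of $L$, again arranged so that all four types occur exactly once at each attachment.

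\emph{Inductive step.} Assume $s\ge 3$, and let $T_s$ be a leaf of the tree induced by $T_1,\dots,T_s$ in $O_G(L)$, overlapping there only the bridge $T_r$. Let $G_1$ be $G$ with the interior of $T_s$ removed: it is still $2$-connected, and its $L$-bridges are exactly $T_1,\dots,T_{s-1}$, still inducing a tree (the original tree minus a leaf). Let $G_2$ be the union of $L$, the bridge $T_s$, and one auxiliary bridge --- a chord or a short internally disjoint path --- whose attachments are chosen to interleave with those of $T_s$ exactly as two suitable attachments of $T_r$ do, so that $G_2$ is a $2$-connected two-bridge instance; placing the auxiliary requires a short case analysis exploiting the $2$-connectivity of $G$ and the interleaving of the attachments of $T_r$ and $T_s$ along $L$. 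Applying the induction hypothesis to $(G_1,L)$ and the base case to $(G_2,L)$ yields families $\mathcal{C}_1$ and $\mathcal{C}_2$. One then assembles $\mathcal{C}$ from $\mathcal{C}_1$ and $\mathcal{C}_2$ by discarding the dicycles that traverse the auxiliary edge and, for each discarded dicycle, inserting the symmetric difference of it with the dicycle of the other family having the matching type at the two auxiliary attachments; the type-matching (available by \ref{C4} for $\mathcal{C}_1$ and $\mathcal{C}_2$) ensures both that each symmetric difference is a single dicycle whose underlying graph $C$ has $L\triangle C$ a cycle, and that the edges of $L$ that would otherwise be over-covered cancel. One finally re-verifies \ref{C1}--\ref{C4} for $\mathcal{C}$.

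\emph{Main obstacle.} I expect essentially all of the difficulty to lie in the inductive step, and within it in two places. First, one must show that the auxiliary path can always be placed so that $G_2$ is $2$-connected with the right overlap structure, and more generally that both reduced instances meet the hypotheses of the theorem; several configurations of the attachments of $T_r$ and $T_s$ along $L$ must be handled. Second, and this is the real heart of the matter, is the bookkeeping in the merge: proving that matching types turns each symmetric difference into a single cycle rather than a pair of cycles, and --- the genuinely delicate point --- that after all deletions, insertions and cancellations every edge of $L$ is covered \emph{exactly} twice, not merely with the right parity, while every edge off $L$ remains covered at least four times, and that \ref{C4} is restored. It is precisely the rigidity imposed by \ref{C4}, every type present exactly once, that makes this exact accounting close, and I expect the bulk of the paper to be devoted to carrying it out.
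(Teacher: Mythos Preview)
Your overall architecture --- induction on $s$, explicit constructions for $s\le 2$, a two-piece split for $s\ge 3$, and gluing via type-matched symmetric differences --- is exactly what the paper does, and your explanation of \emph{why} the directed types and \ref{C4} are the right invariant is on the mark. The gap is in the specific decomposition you propose for the inductive step.

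You form $G_1$ by deleting the leaf bridge $T_s$ and $G_2$ by putting $T_s$ back together with a fresh auxiliary chord whose endpoints $p,q$ sit at two attachments of $T_r$. The paper does \emph{not} delete the leaf; it splits the neighbouring bridge $T_r$ (their $T_2$) into two overlapping subtrees $T_{21}$, $T_{22}$ along a carefully chosen pair of attachments $a,b$, putting $T_{21}$ with $T_3,\dots,T_s$ into $G_1$ and $T_{22}$ with the leaf $T_1$ into $G_2$. The difference matters for two reasons.

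First, in your scheme the auxiliary edge lives only in $G_2$. When you take the symmetric difference of a $\mathcal{C}_2$-dicycle $\vec{D}_2$ through the auxiliary with any $\vec{D}_1\in\mathcal{C}_1$, the auxiliary edge survives, since $\vec{D}_1$ cannot contain it; the resulting cycle is not a cycle of $G$. In the paper's split, the seam consists of the attachments $a,b$ and (in the harder cases) the added edges $xa,xb$, and these belong to \emph{both} $T^*_{21}$ and $T^*_{22}$, hence to both reduced graphs; the symmetric difference then genuinely cancels them.

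Second, you assume that for a discarded $\vec{D}_2$ you can find $\vec{D}_1\in\mathcal{C}_1$ with the matching type at \emph{both} of $p$ and $q$. But \ref{C4} only guarantees the type at each attachment separately; the dicycle of type $ij$ at $p$ in $\mathcal{C}_1$ will in general pair $p$ with some other attachment of $T_r$, not with $q$. The paper engineers this: $a$ and $b$ are chosen (after adding $xa,xb$ and subdividing $L$ if necessary) so that the arc $L'[a,b]$ contains no attachment of any bridge of $G_1^*$ other than $a,b$. Then \ref{C3}+\ref{C4} \emph{force} the dicycle of a given type at $a$ in $\mathcal{C}^1$ to be exactly $\vec{T}_2[b,a]\cup \vec{L}'[a,b]$ (and variants), hence to pass through $b$ with the complementary type. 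This identification of the seam dicycles (the paper's Claims~\ref{claim-four cycles} and \ref{claim2-six cycles}) is what makes the exact bookkeeping close; it is not available in your split, because between two pre-existing attachments of $T_r$ the arc of $L$ generally contains other attachments.

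So the ``short case analysis'' you allude to for placing the auxiliary cannot repair the scheme as stated: what is needed is to place the seam \emph{inside} $T_r$ and share it between the two pieces, which is precisely the paper's move.
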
 
	
	By considering the underlying graphs of the dicycles in $\C$, it is straightforward to see that Theorem~\ref{main-thm1} already follows from \ref{C1} and \ref{C2}.

	\section{Preliminary for the proof of Theorem~\ref{main-thm2}} 
	\label{sec-pre}
	
	In this section, we introduce several definitions and lemmas that will be useful later. In particular, we will present a method for constructing a special family of paths that covers a tree. For this purpose, we define an auxiliary digraph and introduce some of its properties in Section \ref{sec-digraph}. Then, in Section \ref{sec-path}, we use this auxiliary digraph to construct a family of paths in which each edge is covered at least four times.

	\subsection{An auxiliary digraph}\label{sec-digraph}
	
	Let $n$ and $k$ be two positive integers with $k \leq n$. We say a $k$-tuple $\eta = (p_1, p_2, \ldots, p_k)$ is a \emph{$k$-partition} of $n$ if $p_i \geq 1$ for each $1 \leq i \leq k$ and $\sum_{i=1}^{k} p_i = n$. Given a $k$-partition $\eta$ of $n$, we define a multidigraph $D_\eta$ with vertex-set $\bigcup _{i=1}^k\{v^i_1, v^i_2, \ldots, v^i_{p_i}\}$ and arcs as follows:
	\begin{itemize}
		\item $k=1$. We add the arcs $(v^1_i,v^1_{i+1})$ and  $(v^1_{i+1},v^1_i)$ for each $i=1,2,\ldots, n$, where we interpret $v^1_{n+1}$ as $v^1_1$.
		\item $k \geq 2$. We add (1) the arcs $(v^i_j, v^i_{j+1})$ and $(v^i_{j+1},v^i_j)$ for all $1 \leq i \leq k$ and $1 \leq j \leq p_i-1$ (if $p_i=1$, then this step is skipped); (2) the arcs $(v^i_{1}, v^{i+1}_{1})$ and  $(v^i_{p_i},v^{i+1}_{p_{i+1}})$ for all $1 \leq i \leq k$, where we interpret $v^{k+1}_1$ as $v^1_1$ and $v^{{}\,k+1}_{p_{k+1}}$ as $v^1_{p_1}$.
	\end{itemize} 
	
	\begin{figure}[h]
		\begin{minipage}[t]{0.23\textwidth}
			\centering
			\begin{tikzpicture}[>=latex,	
				blacknode/.style={circle, draw = black,  minimum size=0mm, inner sep=-0.1pt},
				] 
				\node [blacknode] (A1) at (36:1.6cm){$v^1_1$};
				\node [blacknode] (A2) at (72:1.6cm){$v^1_{\scalebox{0.5}{10}}$};
				\node [blacknode] (A3) at (108:1.6cm){$v^1_9$};
				\node [blacknode] (A4) at (144:1.6cm){$v^1_8$};
				\node [blacknode] (A5) at (180:1.6cm){$v^1_7$};
				\node [blacknode] (A6) at (216:1.6cm){$v^1_6$};	
				\node [blacknode] (A7) at (252:1.6cm){$v^1_5$};
				\node [blacknode] (A8) at (288:1.6cm){$v^1_4$};
				\node [blacknode] (A9) at (324:1.6cm){$v^1_3$};
				\node [blacknode] (A10) at (360:1.6cm){$v^1_2$};

				\draw [->](A1) [bend left =15] to (A2);
				\draw [->](A2) [bend left =15] to (A1);
				
				\draw [->](A3) [bend left =15] to (A2);
				\draw [->](A2) [bend left =15] to (A3);
				
				\draw [->](A3) [bend left =15] to (A4);
				\draw [->](A4) [bend left =15] to (A3);
				
				\draw [->](A4) [bend left =15] to (A5);
				\draw [->](A5) [bend left =15] to (A4);
				
				\draw [->](A5) [bend left =15] to (A6);
				\draw [->](A6) [bend left =15] to (A5);
				
				\draw [->](A6) [bend left =15] to (A7);
				\draw [->](A7) [bend left =15] to (A6);
				
				\draw [->](A7) [bend left =15] to (A8);
				\draw [->](A8) [bend left =15] to (A7);
				
				\draw [->](A9) [bend left =15] to (A8);
				\draw [->](A8) [bend left =15] to (A9);
				
				\draw [->](A9) [bend left =15] to (A10);
				\draw [->](A10) [bend left =15] to (A9);
				
				\draw [->](A1) [bend left =15] to (A10);
				\draw [->](A10) [bend left =15] to (A1);
			\end{tikzpicture} 
			\subcaption{$\eta=(10)$}
		\end{minipage}  
		\begin{minipage}[t]{0.25\textwidth}
			\centering
			\begin{tikzpicture}[>=latex,	
				blacknode/.style={circle, draw = black,  minimum size=0mm, inner sep=-0.1pt}] 
				\node [blacknode] (A1) at (36:1.6cm){$v^1_1$};
				\node [blacknode] (A2) at (72:1.6cm){$v^3_3$};
				\node [blacknode] (A3) at (108:1.6cm){$v^3_2$};
				\node [blacknode] (A4) at (144:1.6cm){$v^3_1$};
				\node [blacknode] (A5) at (180:1.6cm){$v^2_3$};
				\node [blacknode] (A6) at (216:1.6cm){$v^2_2$};	
				\node [blacknode] (A7) at (252:1.6cm){$v^2_1$};
				\node [blacknode] (A8) at (288:1.6cm){$v^1_4$};
				\node [blacknode] (A9) at (324:1.6cm){$v^1_3$};
				\node [blacknode] (A10) at (360:1.6cm){$v^1_2$};

				\draw [->](A1) to (A7);
				\draw [->](A2) to (A8);
				
				\draw [->](A3) [bend left =15] to (A2);
				\draw [->](A2) [bend left =15] to (A3);
				
				\draw [->](A3) [bend left =15] to (A4);
				\draw [->](A4) [bend left =15] to (A3);
				
				\draw [->](A4)  to (A1);
				\draw [->](A5)  to (A2);
				
				\draw [->](A5) [bend left =15] to (A6);
				\draw [->](A6) [bend left =15] to (A5);
				
				\draw [->](A6) [bend left =15] to (A7);
				\draw [->](A7) [bend left =15] to (A6);
				
				\draw [->](A7)  to (A4);
				\draw [->](A8)  to (A5);
				
				\draw [->](A9) [bend left =15] to (A8);
				\draw [->](A8) [bend left =15] to (A9);
				
				\draw [->](A9) [bend left =15] to (A10);
				\draw [->](A10) [bend left =15] to (A9);
				
				\draw [->](A1) [bend left =15] to (A10);
				\draw [->](A10) [bend left =15] to (A1);
			\end{tikzpicture} 
			\subcaption{$\eta=(4,3,3)$}
		\end{minipage}
		\begin{minipage}[t]{0.25\textwidth}
			\centering
			\begin{tikzpicture}[>=latex,	
				blacknode/.style={circle, draw = black, minimum size=0mm, inner sep=-0.1pt}] 
				\node [blacknode] (A1) at (36:1.6cm){$v^1_1$};
				\node [blacknode] (A2) at (72:1.6cm){$v^4_1$};
				\node [blacknode] (A3) at (108:1.6cm){$v^3_3$};
				\node [blacknode] (A4) at (144:1.6cm){$v^3_2$};
				\node [blacknode] (A5) at (180:1.6cm){$v^3_1$};
				\node [blacknode] (A6) at (216:1.6cm){$v^2_2$};	
				\node [blacknode] (A7) at (252:1.6cm){$v^2_1$};
				\node [blacknode] (A8) at (288:1.6cm){$v^1_4$};
				\node [blacknode] (A9) at (324:1.6cm){$v^1_3$};
				\node [blacknode] (A10) at (360:1.6cm){$v^1_2$};

				\draw [->](A1) to (A7);
				\draw [->](A2) to (A8);
				
				\draw [->](A3) [bend left =15] to (A4);
				\draw [->](A2) to (A1);
				
				\draw [->](A3) to (A2);
				\draw [->](A4) [bend left =15] to (A3);
				
				\draw [->](A4) [bend left =15] to (A5);
				\draw [->](A5)  to (A2);
				
				\draw [->](A5) [bend left =15] to (A4);
				\draw [->](A6) to (A3);
				
				\draw [->](A6) [bend left =15] to (A7);
				\draw [->](A7) [bend left =15] to (A6);
				
				\draw [->](A7) [bend right =15]  to (A5);
				\draw [->](A8) [bend right =15]  to (A6);
				
				\draw [->](A9) [bend left =15] to (A8);
				\draw [->](A8) [bend left =15] to (A9);
				
				\draw [->](A9) [bend left =15] to (A10);
				\draw [->](A10) [bend left =15] to (A9);
				
				\draw [->](A1) [bend left =15] to (A10);
				\draw [->](A10) [bend left =15] to (A1);
			\end{tikzpicture} 
			\subcaption{$\eta=(4,2,3,1)$}
		\end{minipage}
		\begin{minipage}[t]{0.23\textwidth}
			\centering
			\begin{tikzpicture}[>=latex,	
				blacknode/.style={circle, draw = black,  minimum size=0mm, inner sep=-0.1pt},
				] 
				\node [blacknode] (A1) at (36:1.6cm){$v^1_{\scalebox{0.5}{1}}$};
				\node [blacknode] (A2) at (72:1.6cm){$v^{\scalebox{0.5}{10}}_1$};
				\node [blacknode] (A3) at (108:1.6cm){$v^{_9}_1$};
				\node [blacknode] (A4) at (144:1.6cm){$v^{_8}_1$};
				\node [blacknode] (A5) at (180:1.6cm){$v^{_7}_1$};
				\node [blacknode] (A6) at (216:1.6cm){$v^{_6}_1$};	
				\node [blacknode] (A7) at (252:1.6cm){$v^{_5}_1$};
				\node [blacknode] (A8) at (288:1.6cm){$v^4_1$};
				\node [blacknode] (A9) at (324:1.6cm){$v^3_1$};
				\node [blacknode] (A10) at (360:1.6cm){$v^2_1$};

				\draw [->](A2) [bend right =15] to (A1);
				\draw [->](A2) [bend left =15] to (A1);
				
				\draw [->](A3) [bend left =15] to (A2);
				\draw [->](A3) [bend right =15] to (A2);
				
				\draw [->](A4) [bend right =15] to (A3);
				\draw [->](A4) [bend left =15] to (A3);
				
				\draw [->](A5) [bend right =15] to (A4);
				\draw [->](A5) [bend left =15] to (A4);
				
				\draw [->](A6) [bend right =15] to (A5);
				\draw [->](A6) [bend left =15] to (A5);
				
				\draw [->](A7) [bend right =15] to (A6);
				\draw [->](A7) [bend left =15] to (A6);
				
				\draw [->](A8) [bend right =15] to (A7);
				\draw [->](A8) [bend left =15] to (A7);
				
				\draw [->](A9) [bend left =15] to (A8);
				\draw [->](A9) [bend right =15] to (A8);
				
				\draw [->](A10) [bend right =15] to (A9);
				\draw [->](A10) [bend left =15] to (A9);
				
				\draw [->](A1) [bend left =15] to (A10);
				\draw [->](A1) [bend right =15] to (A10);
			\end{tikzpicture} 
			\subcaption{$\eta=(1,1,\ldots,1)$}
		\end{minipage}  
		\caption{Examples of $D_\eta$, where each $\eta$ is a partition of $10$.}
		\label{fig-digraph}
	\end{figure}
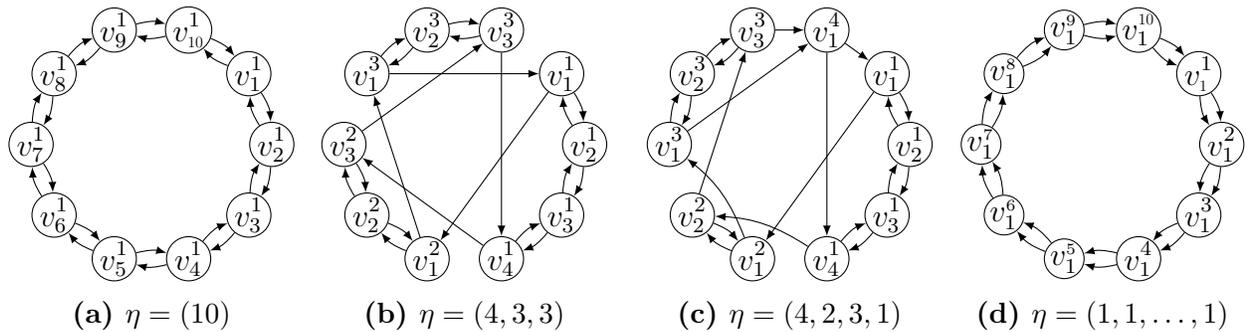 
	It is easy to check that  $D_\eta$ is a digraph in which every vertex has both in-degree $2$ and out-degree $2$. Note that if $p_i=p_{i+1}=1$ for some $i$, then $D_\eta$ contains multiple arcs. See Fig.~\ref{fig-digraph} for examples. 
	For convenience, we denote by $G_\eta$ the underlying multigraph of $D_\eta$, which may contain parallel edges. 
	
	\begin{lemma}
		\label{lem-twocycle}
		For each arc $(u,v)$ in $D_\eta$, there exist two dicycles containing $(u,v)$, and these two dicycles share only this arc. 
	\end{lemma}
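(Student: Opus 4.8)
The plan is to use the very explicit description of $D_\eta$ and simply write down, for every arc, two concrete dicycles through it. First I would set up notation: for $1\le i\le k$ let $P_i=v^i_1v^i_2\cdots v^i_{p_i}$ be the $i$-th \emph{segment}, so that each edge $v^i_jv^i_{j+1}$ carries the two opposite arcs $f^i_j=(v^i_j,v^i_{j+1})$ and $b^i_j=(v^i_{j+1},v^i_j)$, and (for $k\ge 2$) the segments are linked by the \emph{connecting arcs} $\ell^i=(v^i_1,v^{i+1}_1)$ and $r^i=(v^i_{p_i},v^{i+1}_{p_{i+1}})$, all upper indices read modulo $k$. Then I would record a short list of dicycles of $D_\eta$: the \emph{digon} $Z^i_j$ with arc set $\{f^i_j,b^i_j\}$ (defined when $p_i\ge 2$); the two \emph{rings} $\mathcal{L}=\{\ell^1,\dots,\ell^k\}$ and $\mathcal{R}=\{r^1,\dots,r^k\}$; the \emph{detours} $\vv{D}_i$ that run forward through $P_i$, then along $r^i$, then backward through $P_{i+1}$, then along $\ell^{i+1},\ell^{i+2},\dots,\ell^{i-1}$ back to $v^i_1$; and the \emph{mixed detours} $\vv{K}_i$ that run along $\ell^i$, then forward through $P_{i+1}$, then along $r^{i+1},r^{i+2},\dots,r^{i-1}$, then backward through $P_i$, together with the mirror images $\vv{K}'_i$ obtained by interchanging the roles of $\ell$ and $r$ and of ``forward'' and ``backward''. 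Writing out the vertices in the order visited confirms that each of these closed walks is a genuine simple dicycle of $D_\eta$, and this remains true when some $p_i=1$, in which case the corresponding traversal of $P_i$ is empty.

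When $k=1$ the digraph is just the cycle $v^1_1v^1_2\cdots v^1_n$ with every edge doubled; here, for a forward arc $f^1_j$ I would pair the digon $Z^1_j$ with the whole forward cycle $\{f^1_1,\dots,f^1_n\}$, which meet only in $f^1_j$, and backward arcs are symmetric (the one degeneracy, $n=2$, just offers two antiparallel arcs to pair up). So assume $k\ge 2$ and split into cases according to the type of $(u,v)$. If $(u,v)=f^i_j$, take $Z^i_j$ and $\vv{D}_i$: the first has arc set $\{f^i_j,b^i_j\}$, while $\vv{D}_i$ contains $f^i_j$ (it uses \emph{all} forward arcs of $P_i$) but no backward arc of $P_i$, so the two meet exactly in $f^i_j$. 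If $(u,v)=b^i_j$, take $Z^i_j$ and $\vv{D}_{i-1}$, which uses all backward arcs of $P_i$ and no forward arc of $P_i$. If $(u,v)=\ell^i$, take $\mathcal{L}$ and $\vv{K}_i$: the ring $\mathcal{L}$ is exactly $\{\ell^1,\dots,\ell^k\}$, whereas among the connecting arcs $\vv{K}_i$ uses only $\ell^i$ (all its remaining arcs are segment arcs or lie in $\{r^{i+1},\dots,r^{i-1}\}$), so the intersection is $\{\ell^i\}$. Finally, if $(u,v)=r^i$, take $\mathcal{R}$ and $\vv{K}'_i$, the mirror of the previous case.

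In every case it then remains to verify that the two listed arc sets really are dicycles and that they intersect in exactly $\{(u,v)\}$; that finishes the proof. The hard part is entirely bookkeeping: one must check that the detour walks never repeat a vertex (a priori they could), and one must carefully track the degenerate configurations --- $k\le 2$, where $\mathcal{L}$ and $\mathcal{R}$ are themselves digons; segments with $p_i=1$, where $v^i_1=v^i_{p_i}$; and the parallel arcs these produce (note that $\ell^i$ and $r^i$ are always distinct arcs) --- to be sure that the constructions and the ``exactly one common arc'' claim survive each of them. No genuinely new idea should be needed: the content is just that $D_\eta$ is assembled from the digons $Z^i_j$ and the two rings $\mathcal{L},\mathcal{R}$ in a sufficiently transparent way.
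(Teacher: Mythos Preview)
Your proposal is correct and follows essentially the same approach as the paper: for each arc you pair a short dicycle (the digon for segment arcs, the ring $\mathcal{L}$ or $\mathcal{R}$ for connecting arcs) with a long ``detour'' dicycle, and your $\vv{D}_i$ and $\vv{K}_i$ are exactly the cycles the paper writes out explicitly. The only difference is presentational---you introduce more systematic notation and handle the backward arcs and $r$-arcs by explicit mirror constructions rather than appealing to symmetry---and your attention to the degenerate cases ($k\le 2$, $p_i=1$, parallel arcs) matches the paper's closing remark about multi-arcs.
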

	\begin{proof}
		Suppose $\eta$ is a $k$-partition. The statement obviously holds for $k=1$, one dicycle of length $2$ and the other of length $n$ suffice. Thus we assume that $k\geq 2$. It suffices to describe such two dicycles. 
		
		First assume that $u=v^i_j$ and $v=v^i_{j+1}$ for some $1 \leq i \leq k$ with $p_i \geq 2$ and $1 \leq j \leq p_i-1$. It is clear that $v^i_j \rightarrow v^i_{j+1} \rightarrow v^i_j$  and
		$$v^i_j \rightarrow v^i_{j+1} \rightarrow \cdots \rightarrow  v^i_{p_i} \rightarrow v^{i+1}_{p_{i+1}} \rightarrow \cdots \rightarrow  v^{i+1}_1 \rightarrow v^{i+2}_1 \rightarrow v^{i+3}_1 \rightarrow \cdots \rightarrow  v^{i}_1 \rightarrow v^{i}_2 \rightarrow \cdots \rightarrow  v^{i}_j$$ are the desired dicycles (the upper indices are taken modulo $k$).	
		
		Thus without loss of generality, we may assume that $u=v^i_{1}$ and $v=v^{i+1}_1$. Then  $v^i_1 \rightarrow v^{i+1}_{1} \rightarrow  v^{i+2}_{1} \rightarrow \cdots \rightarrow   v^{i}_1$ and 	$$v^i_1 \rightarrow v^{i+1}_{1} \rightarrow v^{i+1}_{2} \rightarrow \cdots \rightarrow  v^{i+1}_{p_{i+1}} \rightarrow v^{i+2}_{p_{i+2}} \rightarrow \cdots \rightarrow  v^{i}_{p_i} \rightarrow  v^{i}_{p_i-1} \rightarrow \cdots v^i_1$$ are the desired dicycles. We remark that the arcs $(v^{j}_1,v^{j+1}_1)$ and $(v^{j}_{p_j},v^{j+1}_{p_{j+1}})$ are essentially different if $p_j=p_{j+1}=1$, in which case they are multi-arcs sharing the same tail and head by the definitions of $D_\eta$.  
	\end{proof}  
	 
	Given a digraph $D(G)$ with underlying graph $G$, for $S,T \subsetneq V(G)$, we denote by $[S,T]_{D(G)}$ the set of arcs with tails in $S$ and heads in $T$, and $[S,T]_G$ the edges of $G$ joining $S$ and $T$. For a subset $\emptyset \neq X \subsetneq V(G)$, we denote by $\overline{X}$ the vertex-set  $V(G) \setminus X$.  
	
	\begin{lemma}
		\label{lem-atleast4}
		For any $k$-partition $\eta$ of $n$, $1 \leq k \leq n$ and any $\emptyset \neq X \subsetneq V(D_\eta)$, we have $|[X,\overline{X}]_{G_{\eta}}| \geq 4$.
	\end{lemma}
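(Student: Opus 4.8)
The plan is to reduce the bound to two elementary facts about the multigraph $G_\eta$, which are then combined with Lemma~\ref{lem-twocycle}.

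\emph{Step 1: parity of cuts.} Since every vertex of $D_\eta$ has in-degree $2$ and out-degree $2$, the multigraph $G_\eta$ is $4$-regular. Counting edge-endpoints lying in $X$ then gives $|[X,\overline X]_{G_\eta}| = 4|X| - 2m$, where $m$ is the number of edges of $G_\eta$ with both endpoints in $X$; hence $|[X,\overline X]_{G_\eta}|$ is always even.

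\emph{Step 2: connectivity.} I would check that $G_\eta$ is connected. For $k=1$ this is clear, and for $k\ge 2$ each block-path $v^i_1v^i_2\cdots v^i_{p_i}$ connects the vertices of its block while the edges coming from the arcs $(v^i_1,v^{i+1}_1)$ connect consecutive blocks. In particular $[X,\overline X]_{G_\eta}\neq\emptyset$ for every admissible $X$, so by Step~1 it suffices to prove $|[X,\overline X]_{G_\eta}|\ge 3$.

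\emph{Step 3: three crossing edges via Lemma~\ref{lem-twocycle}.} Fix an arc $e$ of $D_\eta$ with endpoints on opposite sides of $(X,\overline X)$ (such $e$ exists by Step~2). By Lemma~\ref{lem-twocycle} there are two dicycles $\vv{C_1}$ and $\vv{C_2}$ of $D_\eta$, both containing $e$ and sharing only the arc $e$. The key observation is that a dicycle crosses the cut an even number of times: going once around it, the number of its arcs directed from $X$ to $\overline X$ equals the number directed from $\overline X$ to $X$, so the number of its arcs with endpoints on both sides is even; being at least $1$ (it contains $e$), this number is at least $2$ for each of $\vv{C_1}$ and $\vv{C_2}$. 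Thus each $\vv{C_j}$ contains, besides $e$, at least one further crossing arc, and these two further arcs are distinct from $e$ and from each other because $\vv{C_1}$ and $\vv{C_2}$ share only $e$. Distinct arcs of $D_\eta$ give distinct (possibly parallel) edges of $G_\eta$, so $[X,\overline X]_{G_\eta}$ contains at least three edges; by Step~1 it then contains at least $4$, as required.

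The heavy lifting is done entirely by Lemma~\ref{lem-twocycle}, so I do not expect a real obstacle; the only points needing a little care are the routine check that $G_\eta$ is connected and the remark that a dicycle meets any cut in an even number of arcs. Degenerate situations --- small $n$, or parallel/loop arcs when some $p_i=1$ --- are absorbed by the same argument; for instance, when $n=1$ there is no admissible $X$, so the statement is vacuous there.
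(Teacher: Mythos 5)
Your proposal is correct and follows essentially the same route as the paper: the authors also combine Lemma~\ref{lem-twocycle} (two dicycles through a crossing arc, each forced to cross back, yielding three cut edges) with the parity of cuts in the $4$-regular multigraph $G_\eta$ to upgrade $3$ to $4$. Your write-up is slightly more careful in making the connectedness of $G_\eta$ and the even-crossing property of dicycles explicit, but there is no substantive difference.
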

	\begin{proof}
		Assume for some  $\eta$ and a  subset $\emptyset \neq X \subsetneq V(D_\eta)$, we have $|[X,\overline{X}]_{G_\eta}| \leq 2$. Without loss of generality, assume that $[X,\overline{X}]_{D_{\eta}} \neq \emptyset$ and  $(u,v) \in [X,\overline{X}]_{D_{\eta}}$ is an arc, then for any dicycle in $D_\eta$ containing $(u,v)$, it must contain an arc in $[\overline{X},X]_{D_\eta}$. This contradicts Lemma \ref{lem-twocycle},  since we are unable to find two dicycles containing $(u,v)$ but only sharing $(u,v)$. Thus $|[X,\overline{X}]_{G_{\eta}}|  \geq 3$.  On the other hand, if $|[X,\overline{X}]_{G_\eta}|$ is odd, then the degree sum of the graph induced by $X$ is odd as each vertex in $G_\eta$ has even degree, but then it contradicts the Handshaking Lemma. Thus we have $|[X,\overline{X}]_{G_{\eta}}|  \geq 4$.
	\end{proof}
	
	\subsection{A family of dipaths of tree with labeled leaves}\label{sec-path}
	
	Given a tree $T$, let $\mathcal{P}(T)$ be the family of all possible dipaths of $T$ between leaves. For a dipath $\vv{P} \in \mathcal{P}(T)$ and an edge $e \in E(T)$, we say $\vv{P}$ covers $e$ if $e \in E(P)$. For $\mathcal{P} \subseteq \mathcal{P}(T)$ and an edge $e \in E(T)$, let $\mathcal{P}_e = \{\vv{P} \in \mathcal{P}: \text{$\vv{P}$ covers $e$}\}$.
	
	In this section, we will construct a subfamily $\mathcal{P} \subseteq \mathcal{P}(T)$, using the auxiliary digraph which we introduced in the beginning of Section~\ref{sec-digraph}, such that each edge of $T$ is covered by at least four dipaths in $\mathcal{P}$.
	
	For a tree $T$ and two distinct vertices $u,v \in T$, we denote by $T[u,v]$ the unique path in $T$ between $u$ and $v$, by $\vv{T}[u,v]$ the corresponding dipath from $u$ to $v$. We denote by $\partial(T)$ the set of leaves of $T$. 
	
	Recall that $D_{\eta}$ is the auxiliary digraph with respect to a $k$-partition $\eta = (p_1, p_2, \ldots, p_k)$ of some integer $n$. For a tree $T$ and a $k$-partition $\eta = (p_1, p_2, \ldots, p_k)$ of $|\partial(T)|$, an \emph{$\eta$-labeling} $\ell$ of $\partial(T)$ is a bijection from $\partial(T)$ to $V(D_{\eta})$.

	Given a tree $T$, a $k$-partition $\eta=(p_1, p_2, \ldots, p_k)$ of $|\partial(T)|$ and an $\eta$-labeling $\ell$ of $\partial(T)$, we define $\mathcal{P}(T,\eta, \ell)$ to be the collection of all dipaths $\vv{T}[u,v]$, where $u$ and $v$ are leaves of $T$ such that $(\ell(u),\ell(v))$ is an arc in the auxiliary digraph $D_{\eta}$.

	The following lemma analyzes the number of dipaths that cover an edge in   $\mathcal{P}(T,\eta,\ell)$.  
	
	\begin{lemma}
		\label{lem-coverpath}
		Let $T$ be a tree,  $\eta = (p_1, p_2, \ldots, p_k)$ be a $k$-partition of $|\partial(T)|$ and $\ell$ be a $\eta$-labeling of $\partial(T)$. Then for any edge $e \in E(T)$,  $|\mathcal{P}_e(T,\eta,\ell)|\geq 4$. 
	\end{lemma}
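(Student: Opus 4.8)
The plan is to reduce the statement directly to Lemma~\ref{lem-atleast4} via the natural correspondence between leaf-to-leaf dipaths of $T$ that cover a fixed edge and arcs of the auxiliary digraph $D_\eta$ crossing a suitable vertex cut. First I would fix an edge $e \in E(T)$ and delete it, obtaining two subtrees $T_1$ and $T_2$ which together partition $V(T)$. The basic observation is that for any two leaves $u,v$ of $T$, the dipath $\vv{T}[u,v]$ covers $e$ if and only if $u$ and $v$ lie in different components $T_1$ and $T_2$; this is immediate, since $T[u,v]$ is the unique $u$--$v$ path in $T$ and $e$ is a cut edge of $T$.

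Next I would check that each of $T_1$ and $T_2$ contains at least one leaf of $T$. Indeed, if $T_i$ consists of a single vertex, that vertex has degree $1$ in $T$ and is thus a leaf of $T$; and if $T_i$ has at least two vertices, then, being a tree, it has at least two leaves, of which at most one---the endpoint of $e$ lying in $T_i$---can fail to be a leaf of $T$, so $T_i$ still contains a leaf of $T$. Now set $X := \ell\bigl(\partial(T)\cap V(T_1)\bigr) \subseteq V(D_\eta)$. Since $\ell$ is a bijection from $\partial(T)$ onto $V(D_\eta)$ and, by the previous sentence, both $\partial(T)\cap V(T_1)$ and $\partial(T)\cap V(T_2)$ are nonempty, we get $\emptyset \neq X \subsetneq V(D_\eta)$ and $\overline{X} = \ell\bigl(\partial(T)\cap V(T_2)\bigr)$.

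Then I would establish the exact identity $|\mathcal{P}_e(T,\eta,\ell)| = |[X,\overline{X}]_{D_\eta}| + |[\overline{X},X]_{D_\eta}| = \bigl|[X,\overline{X}]_{G_\eta}\bigr|$. The first equality holds because $\mathcal{P}(T,\eta,\ell)$ consists precisely of the dipaths $\vv{T}[u,v]$ with $(\ell(u),\ell(v))$ an arc of $D_\eta$, and by the observation in the first paragraph such a dipath covers $e$ exactly when $\{\ell(u),\ell(v)\}$ meets both $X$ and $\overline{X}$; as $\ell$ is a bijection, distinct such arcs yield distinct dipaths, so this is a bijection. The second equality is simply the definition of the underlying multigraph $G_\eta$: each arc of $D_\eta$ between $X$ and $\overline{X}$ (in either direction) contributes exactly one edge of $G_\eta$ in the cut $[X,\overline{X}]_{G_\eta}$. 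Applying Lemma~\ref{lem-atleast4} to the set $X$ then yields $\bigl|[X,\overline{X}]_{G_\eta}\bigr| \geq 4$, which is exactly the desired bound.

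The argument is essentially bookkeeping, so I do not expect a serious obstacle; the only points needing care are verifying that each side of the cut carries a leaf of $T$---so that $X$ is a proper nonempty subset and Lemma~\ref{lem-atleast4} is applicable---and confirming that the map from covering dipaths to cut-arcs is a genuine bijection (not merely a surjection), which is where the bijectivity of the labeling $\ell$ enters.
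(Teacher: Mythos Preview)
Your proof is correct and follows essentially the same approach as the paper: fix an edge $e$, split $T$ into the two components $T_1,T_2$ of $T-e$, push the leaf partition through $\ell$ to obtain a proper nonempty subset $X\subsetneq V(D_\eta)$, and invoke Lemma~\ref{lem-atleast4}. If anything, your write-up is a bit more careful than the paper's, since you explicitly verify that each side of the cut contains a leaf of $T$ and spell out the bijection between covering dipaths and cut arcs.
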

	\begin{proof}
		Assume $e \in E(T)$, $T_1$ and $T_2$ are the two components of $T-e$. Let $X_1=\{\ell(v)|v \in \partial(T_1)\}$  and $X_2=\{\ell(v)|v \in \partial(T_2)\}$. Since $\ell$ is a one-to-one correspondence from $\partial(T)$ to $V(D_{\eta})$, and $\partial(T_1) \cap \partial(T_2) =\emptyset$, $\partial(T)= \partial(T_1) \cup \partial(T_2)$, it follows that $V(D_{\eta}) = X_1 \cup X_2$ and $X_1 \cap X_2 = \emptyset$, that is $X_2=\overline{X}_1$ in $D_{\eta}$. By Lemma~\ref{lem-atleast4},  $|[X_1,X_2]_{G_{\eta}}| \geq 4$. This implies that there exist at least four distinct ordered vertex pairs $(a,b) \in \partial(T_1) \times \partial(T_2)$ such that $(\ell(a),\ell(b)) \in [X_1,X_2]_{D_\eta} \cup [X_2,X_1]_{D_\eta}$. For each such pair $(a,b)$, either $\vv{T}[a,b] \in \mathcal{P}(T,\eta, \ell)$ or $\vv{T}[b,a] \in \mathcal{P}(T, \eta, \ell)$, and both $\vv{T}[a,b]$ and $\vv{T}[b,a]$ cover the edge $e$. Thus, $|\mathcal{P}_e(T, \eta, \ell)|\geq 4$.
	\end{proof}
	
	\section{Proof of Theorem \ref{main-thm2}} 
	\label{sec-proof} 
	
	For convenience, we say $\mathcal{C}$ is \emph{feasible} for $(G,L)$ if $\C \subseteq \C(G)$ and $\C$ satisfies \ref{C1}-\ref{C4}. For two vertices $u$ and $v$ on $L$, let $L[u,v]$ be the segment of $L$ from $u$ to $v$ in clockwise direction. We denote by $\vv{L}[u,v]$ the copy of $L[u,v]$ directed from $u$ to $v$, and $\cevv{L}[u,v]$ the copy of $L[u,v]$ directed from $v$ to $u$. For a subgraph $H$ of $G$, let $\vv{C}|_H$ be the subdigraph (not necessarily connected) of $\vv{C}$ whose underlying graph is induced by the edge set $E(C) \cap E(H)$, and let 
	$\mathcal{C}|_H = \{\vv{C}|_H: \vv{C} \in \C \}.$ 
	
	\medskip
	\noindent
	{\bf Basic step: $s=1$} 
	\medskip
	
	Assume that $v_1, v_2, \ldots, v_n$ are leaves of $T_1$, arranged in clockwise cyclic order on $L$.
	We construct $\C \subseteq \C(G)$ as follows: for each $1 \leq i \leq n$, we add two dicycles $$\vv{T}[v_i,v_{i+1}] \cup \cevv{L}[v_i,v_{i+1}] \ \text{and} \  \vv{T}[v_{i+1}, v_i] \cup \vv{L}[v_i,v_{i+1}]$$ to $\mathcal{C}$, where the indices are taken modulo $n$. We shall check that $\C$ satisfies \ref{C1}-\ref{C4}. Note that \ref{C2}-\ref{C4} are straightforward, we omit the verification and only focus on \ref{C1}. 
	
	It is clear that there is a one-to-one correspondence between $\mathcal{C}|_{T}$ and $\mathcal{P}(T,\eta,\ell)$, where $\eta=(n)$ is a $1$-partition of $n$, and $\ell: \partial(T) \to \{v_1,v_2,\ldots,v_n\}$ is the $\eta$-labeling of $\partial(T)$. Therefore, by Lemma \ref{lem-coverpath}, $|\mathcal{C}_e| = |\mathcal{P}_e(T,\eta,\ell)| \geq 4$ for every $e \in E(G) \setminus E(L)$. On the other hand, observe that by our construction, $|\mathcal{C}_e| = 2$ for each $e \in E(L)$. Thus $\mathcal{C}$ satisfies \ref{C1}.  
	
	\medskip
	\noindent
	{\bf Basic step: $s=2$} 
	\medskip 
	
	In this case, $G$ contains exactly two $L$-bridges $T_1$ and $T_2$. Let 
	$$u^1_1, \ldots, u^1_{p_1}, v^1_1, \ldots, v^1_{q_1}, u^2_1, \ldots, u^2_{p_2}, v^2_1, \ldots, v^2_{q_2}, \ldots, u^k_1,\ldots,u^k_{p_k},v^k_1,\ldots,v^k_{q_k}$$
	be all the attachments listed in clockwise cyclic order on $L$, where $p_i, q_i \geq 1$ for each $1 \leq i \leq k$, the vertices $u^i_j$'s are the attachments (leaves) of $T_1$, and the vertices $v^i_j$'s are the attachments (leaves) of $T_2$. Note that $k \geq 2$ because the overlap graph $O_{G}(L)$ is connected.  Thus $\eta_1=(p_1, p_2, \ldots, p_k)$ is a $k$-partition of $|\partial(T_1)|$, and $\eta_2=(q_1,q_2,\ldots,q_k)$ is a $k$-partition of $|\partial(T_2)|$. Let $\ell_i$ be the corresponding $\eta_i$-labeling of $\partial(T_i)$ for $i=1,2$.   
	
	We now construct $\C$ as follows (See Fig.~\ref{fig:s=2} for illustration). For  $1 \leq i \leq k$,  we add the following dicycles to $\C$, where the (superscript) index $0$ is interpreted as $k$, and $k+1$  as $1$, 
	\begin{enumerate}[label= {(\arabic*)}] 
		\item \label{s21} $\vv{T_1}[u^i_j,u^i_{j+1}] \cup \cevv{L}[u^i_{j},u^i_{j+1}]$ and $\vv{T_1}[u^i_{j+1}, u^i_j] \cup \vv{L}[u^i_j,u^i_{j+1}]$ for each $1 \leq j \leq p_i-1$ if $p_i \geq 2$.
		\item \label{s22} $\vv{T_2}[v^i_j,v^i_{j+1}] \cup \cevv{L}[v^i_{j},v^i_{j+1}]$ and $\vv{T_2}[v^i_{j+1}, v^i_j] \cup \vv{L}[v^i_j,v^i_{j+1}]$ for each $1 \leq j \leq q_i-1$ if $q_i \geq 2$.
		\item \label{s23} $\vv{T_2}[v^i_1, v^{i+1}_1] \cup \cevv{L}[u^{i+1}_{p_{i+1}},v^{i+1}_1] \cup \vv{T_1}[ u^{i+1}_{p_{i+1}}, u^i_{p_i}] \cup \vv{L}[u^i_{p_i},v^i_1]$. 
		\item \label{s24} $\vv{T_2}[v^{i-1}_{q_{i-1}}, v^i_{q_i}] \cup \vv{L}[v^i_{q_i},u^{i+1}_1] \cup \vv{T_1}[u^{i+1}_1, u^i_1] \cup \cevv{L}[v^{i-1}_{q_{i-1}},u^i_1]$.
	\end{enumerate} 
	
	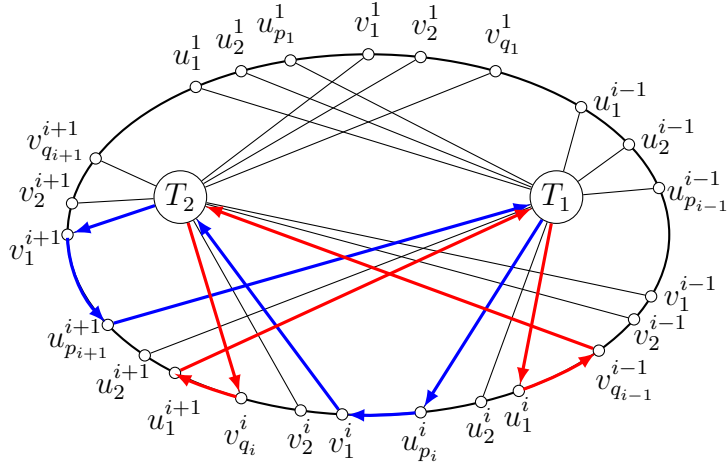
\begin{figure}[h] 
		\centering
		\begin{tikzpicture}[>=latex,
			Tnode/.style={circle, draw= black, fill = white, minimum size=7mm, inner sep=0pt},
			snode/.style={circle, draw=black,fill=white, minimum size=1.5mm, inner sep=0pt}] 
			 
			\draw[thick] (0,0) ellipse (4cm and 2.4cm);
			 
			\node [Tnode] (T2) at (-2.5,0.5){}; 
			\node [Tnode] (T1) at (2.5,0.5){};  
			\node at (-2.5,0.5){$T_2$};
			\node at (2.5,0.5){$T_1$};
			 
			\node[snode, label={[xshift=-3pt] $u^1_1$}] (u11) at (125:4cm and 2.4cm) {};
			\node[snode, label={[xshift=-4pt] $u^1_2$}] (u12) at (115:4cm and 2.4cm) {};
			\node[snode, label={[xshift=-5pt] $u^1_{p_1}$}] (u1p) at (105:4cm and 2.4cm) {};
			
			\node[snode, label={[xshift=0pt] $v^1_1$}] (v11) at (90:4cm and 2.4cm) {};
			\node[snode, label={[xshift=2pt] $v^1_2$}] (v12) at (80:4cm and 2.4cm) {};
			\node[snode, label={[xshift=3pt] $v^1_{q_1}$}] (v1q) at (65:4cm and 2.4cm) {};
			
			\node[snode, label={[xshift=15pt,yshift=-10pt] $u^{i-1}_1$}] (ux1) at (45:4cm and 2.4cm) {};
			\node[snode, label={[xshift=15pt,yshift=-10pt] $u^{i-1}_2$}] (ux2) at (30:4cm and 2.4cm) {};
			\node[snode, label={[xshift=15pt,yshift=-16pt] $u^{i-1}_{p_{i-1}}$}] (uxp) at (15:4cm and 2.4cm) {};
			
			\node[snode, label={[xshift=15pt, yshift=-12pt] $v^{i-1}_1$}] (vx1) at (-20:4cm and 2.4cm) {};
			\node[snode, label={[xshift=10pt, yshift=-18pt] $v^{i-1}_2$}] (vx2) at (-28:4cm and 2.4cm) {};
			\node[snode, label={[xshift=10pt, yshift=-26pt] $v^{i-1}_{q_{i-1}}$}] (vxq) at (-40:4cm and 2.4cm) {};
			
			\node[snode, label={[yshift=-22pt] $u^i_1$}] (uy1) at (-60:4cm and 2.4cm) {};
			\node[snode, label={[yshift=-22pt] $u^i_2$}] (uy2) at (-68:4cm and 2.4cm) {};
			\node[snode, label={[yshift=-25pt] $u^i_{p_i}$}] (uyp) at (-80:4cm and 2.4cm) {};
			
			\node[snode, label={[yshift=-23pt] $v^i_1$}] (vy1) at (-95:4cm and 2.4cm) {};
			\node[snode, label={[yshift=-23pt] $v^i_2$}] (vy2) at (-103:4cm and 2.4cm) {};
			\node[snode, label={[yshift=-28pt] $v^i_{q_i}$}] (vyq) at (-115:4cm and 2.4cm) {};
			
			\node[snode, label={[yshift=-30pt] $u^{i+1}_1$}] (uz1) at (-130:4cm and 2.4cm) {};
			\node[snode, label={[xshift=-8pt, yshift=-23pt] $u^{i+1}_2$}] (uz2) at (-138:4cm and 2.4cm) {};
			\node[snode, label={[xshift=-11pt, yshift=-20pt] $u^{i+1}_{p_{i+1}}$}] (uzp) at (-150:4cm and 2.4cm) {};
			
			\node[snode, label={[xshift=-12pt, yshift=-18pt] $v^{i+1}_1$}] (vz1) at (180:4cm and 2.4cm) {};
			\node[snode, label={[xshift=-10pt, yshift=-8pt] $v^{i+1}_2$}] (vz2) at (170:4cm and 2.4cm) {};
			\node[snode, label={[xshift=-15pt, yshift=-8pt] $v^{i+1}_{q_{i+1}}$}] (vzq) at (155:4cm and 2.4cm) {};
			 
			\foreach \id in {u11, u12, u1p, ux1, ux2, uxp, uy1, uy2, uyp, uz1, uz2, uzp} {
				\draw (T1) -- (\id);
			}
			 
			\foreach \id in {v11, v12, v1q, vx1, vx2, vxq, vy1, vy2, vyq, vz1, vz2, vzq} {
				\draw (T2) -- (\id);
			} 
			
			\draw [blue, line width=1.2pt,->] (T1)-- (uyp);
			\draw [blue, line width=1.2pt,->] (uzp)--(T1); 
			\draw [blue, line width=1.2pt,->] (-179: 4cm and 2.4cm) arc (-179: -151: 4cm and 2.4cm);
			\draw [blue, line width=1.2pt,->] (T2)-- (vz1);
			\draw [blue, line width=1.2pt,->] (vy1)--(T2); 
			\draw [blue, line width=1.2pt,->] (-81: 4cm and 2.4cm) arc (-81:-94: 4cm and 2.4cm);
			
			\draw [red, line width=1.2pt,->] (T1)--(uy1);
			\draw [red, line width=1.2pt,->] (uz1)--(T1); 
			\draw [red, line width=1.2pt,->] (-116: 4cm and 2.4cm) arc (-115:-129: 4cm and 2.4cm);
			\draw [red, line width=1.2pt,->] (T2)--(vyq);
			\draw [red, line width=1.2pt,->] (vxq)--(T2); 
			\draw [red, line width=1.2pt,->] (-59: 4cm and 2.4cm) arc (-59:-41:4cm and 2.4cm);
		\end{tikzpicture} 
		\caption{Illustration for the construction of $\C$. The blue dicycle corresponds to \ref{s23} and the red one corresponds to \ref{s24}.}
		\label{fig:s=2} 
	\end{figure} 
	
	By the construction of $\C$, it is easy to check that $\C$ satisfies  \ref{C3}-\ref{C4}, we omit the  details and only focus on the  verification of \ref{C1} and  \ref{C2}.
	
	Observe that for $i=1,2$, the underlying paths in $\mathcal{C}|_{T_i}$ and $\mathcal{P}(T_i,\eta_i,\ell_i)$ are identical, so there is a natural one-to-one correspondence between $\mathcal{C}|_{T_i}$ and  $\mathcal{P}(T_i,\eta_i,\ell_i)$.  Thus by Lemma \ref{lem-coverpath}, for each edge $e \in E(T_i)$, $|\mathcal{C}_e| = |\mathcal{P}_e(T_i, \eta_i, \ell_i)|  \geq 4$. On the other hand, $|\mathcal{C}_e|=2$ for each edge $e \in E(L)$. Therefore $\mathcal{C}$ satisfies \ref{C1}.
	
	The dicycles constructed in \ref{s21} and \ref{s22} satisfy \ref{C2} obviously, we focus on the rest two classes. For $\vv{C} = \vv{T_2}[v^i_1, v^{i+1}_1] \cup \cevv{L}[u^{i+1}_{p_{i+1}},v^{i+1}_1] \cup \vv{T_1}[u^{i+1}_{p_{i+1}}, u^i_{p_i}] \cup \vv{L}[u^i_{p_i},v^i_1]$, we have that $$C \triangle L = L[v^{i+1}_1, u^i_{p_i}] \cup T_1[u^i_{p_i}, u^{i+1}_{p_{i+1}}] \cup L[v^i_1, u^{i+1}_{p_{i+1}}] \cup T_2[v^i_1, v^{i+1}_1]$$ is a cycle. For $\vv{C'} = \vv{T_2}[v^{i-1}_{q_{i-1}}, v^i_{q_i}] \cup \vv{L}[v^i_{q_i},u^{i+1}_1] \cup \vv{T_1}[u^{i+1}_1, u^i_1] \cup \cevv{L}[v^{i-1}_{q_{i-1}},u^i_1]$, 
	$$C' \triangle L = L[u^{i+1}_1, v^{i-1}_{q_{i-1}}] \cup T_2[v^{i-1}_{q_{i-1}}, v^i_{q_i}] \cup L[u^i_1, v^i_{q_i}] \cup T_1[u^i_1, u^{i+1}_1]$$ is also a cycle. Thus $\mathcal{C}$ satisfies \ref{C2}. 
	
	\medskip
	\noindent
	{\bf Induction step: $s \geq 3$}
	
	Recall that $T_1,T_2, \ldots, T_s$ are all the $L$-bridges. We first claim the following.
	
	\begin{claim}
		\label{claim-uv}
		There exist two vertices $u$ and $v$ on $L$ such that the segment $L[u,v]$ contains all the attachments of some $T_i$, which is a leaf in $O_G(L)$, and contains no attachments of any other $L$-bridge that does not overlap with $T_i$.
	\end{claim}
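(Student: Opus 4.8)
I will recast the statement as a combinatorial fact about how the attachments sit on $L$. Because $T_1,\dots,T_s$ are pairwise vertex-disjoint, all of their attachments are \emph{distinct} vertices of $L$; and under this distinctness, an $L$-bridge $B'$ fails to overlap an $L$-bridge $B$ precisely when all attachments of $B'$ lie strictly inside one of the arcs into which the attachments of $B$ partition $L$ (call these the \emph{gaps} of $B$). This is a direct unwinding of the definition of ``overlap''. Hence it suffices to find a leaf $T_i$ of $O_G(L)$, with unique neighbour $T_j$, together with a gap $g$ of $T_i$, such that every $L$-bridge different from $T_i$ and $T_j$ has all of its attachments strictly inside $g$: then, taking $u,v$ to be the two endpoints of $g$ (these are attachments of $T_i$, and of no other bridge) and labelling them so that $L[u,v]=L\setminus\operatorname{int}(g)$, the arc $L[u,v]$ contains every attachment of $T_i$ and no attachment of any bridge that does not overlap $T_i$ --- such bridges being exactly those other than $T_i$ and $T_j$, since $T_i$ is a leaf.

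For an $L$-bridge $B$, let $\operatorname{sp}(B)$ be the shortest arc of $L$ containing all attachments of $B$; equivalently, $\operatorname{sp}(B)$ is $L$ minus the interior of a longest gap of $B$. I will choose $T_i$ to be a leaf of $O_G(L)$ with $|\operatorname{sp}(T_i)|$ as small as possible among all leaves, take $g$ to be a longest gap of $T_i$ (so that $L[u,v]=\operatorname{sp}(T_i)$), and verify the property above. The mechanism driving the proof is the following monotonicity: if an $L$-bridge $B$ has all of its attachments strictly inside some gap $g'\ne g$ of $T_i$, then $\operatorname{sp}(B)\subsetneq\operatorname{sp}(T_i)$, and in particular $|\operatorname{sp}(B)|<|\operatorname{sp}(T_i)|$. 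Indeed $g'$ is not a longest gap of $T_i$, so $|g'|\le |L|/2$; therefore, of the two arcs of $L$ joining the extreme attachments of $B$, the shorter is the one lying inside $g'$, and that arc is contained in $\bar{g'}\subseteq L\setminus\operatorname{int}(g)=\operatorname{sp}(T_i)$ while missing the endpoints of $g$, so the containment is strict.

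Now suppose, for contradiction, that some $L$-bridge $T_m\notin\{T_i,T_j\}$ has an attachment in $L[u,v]=\operatorname{sp}(T_i)$. As $T_i$ is a leaf, $T_m$ does not overlap $T_i$, so all attachments of $T_m$ lie strictly inside one gap of $T_i$; since one of them avoids $\operatorname{int}(g)$, that gap is some $g'\ne g$, and by the monotonicity $|\operatorname{sp}(T_m)|<|\operatorname{sp}(T_i)|$, so $T_m$ is not a leaf of $O_G(L)$. Let $\mathcal{B}$ be the set of $L$-bridges all of whose attachments lie strictly inside $g'$ (so $T_m\in\mathcal{B}$ and $T_j\notin\mathcal{B}$), and put $\Gamma=O_G(L)[\mathcal{B}]$. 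The key point is that any $O_G(L)$-neighbour of a bridge in $\mathcal{B}$ which is not itself in $\mathcal{B}$ must be $T_j$: such a neighbour overlaps a bridge confined to $g'$, hence has an attachment in $\operatorname{int}(g')$, yet it is not confined to $g'$, hence has an attachment strictly inside a different gap of $T_i$; so it overlaps $T_i$, so it is $T_j$. Consequently $\deg_{O_G(L)}(B)\le\deg_\Gamma(B)+1$ for every $B\in\mathcal{B}$, the extra unit being only a possible edge to $T_j$. Since $T_m$ is not a leaf, $\deg_\Gamma(T_m)\ge1$, so the component $C$ of $\Gamma$ containing $T_m$ has at least two vertices; being connected and a subgraph of the tree $O_G(L)$, $C$ is a subtree, hence has at least two leaves, and at most one of them can be adjacent to $T_j$ in $O_G(L)$ (otherwise the path between two such leaves inside $C$, together with their edges to $T_j\notin C$, would be a cycle in $O_G(L)$). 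Choosing a leaf $B$ of $C$ not adjacent to $T_j$ gives $\deg_{O_G(L)}(B)=\deg_\Gamma(B)=1$, so $B$ is a leaf of $O_G(L)$; but $B\in\mathcal{B}$ forces $|\operatorname{sp}(B)|<|\operatorname{sp}(T_i)|$, contradicting the choice of $T_i$. This contradiction proves the claim.

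I expect the delicate part to be the final paragraph: carefully justifying that the only exit from the set $\mathcal{B}$ of bridges trapped inside a gap $g'$ is through $T_j$, and then using acyclicity of $O_G(L)$ to promote a leaf of the component $C$ to an honest leaf of $O_G(L)$ of strictly smaller span. The preliminary steps --- the gap-characterization of overlapping and the span monotonicity --- are routine once one records that all attachments are distinct.
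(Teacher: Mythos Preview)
Your proof is correct and follows the same overall strategy as the paper: choose a leaf $T_i$ of $O_G(L)$ whose attachments span a shortest arc of $L$, and derive a contradiction by producing another leaf with strictly smaller span. The difference is in how the smaller leaf is found. The paper walks along a path in $O_G(L)$ from the offending bridge $T_m$ to some other leaf $T_k$, arguing inductively that each bridge on the path has its attachments trapped in $L[u^+,v^-]$; your argument instead isolates the set $\mathcal{B}$ of all bridges confined to the gap $g'$, observes that the only possible edge from $\mathcal{B}$ to its complement in $O_G(L)$ goes to $T_j$, and then uses acyclicity to find a leaf of the component of $\Gamma=O_G(L)[\mathcal{B}]$ that is not adjacent to $T_j$. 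Your version is a bit more explicit about the special role of the unique neighbour $T_j$, which the paper's sketch (the parenthetical ``$T_\ell$ is not adjacent to $T_i$'') treats somewhat loosely; in exchange, the paper's path-walking picture is slightly more direct. Both routes are short and yield the same contradiction.
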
  
	\begin{proof} 
		We choose two vertices $u,v$ such that $L[u,v]$ is as short as possible and contains all the attachments of some $T_i$ which is a leaf in $O_G(L)$. By this choice, both $u$ and $v$ are attachments of $T_i$. Note that for any other $T_j$ which does not overlap with $T_i$, if $L[u,v]$ contains one attachment of $T_j$, then it contains all the attachments of $T_j$, otherwise $T_i$ and $T_j$ would overlap.
		Suppose the claim is not true. Then $L[u,v]$ must contain all the attachments of $T_j$ for some $j \neq i$, where $T_j$ does not overlap with $T_i$. It is clear that $\partial(T_j)$ lies in $L[u^+,v^-]$ as the $L$-bridges are pairwise disjoint. We claim that $L[u^+,v^-]$ contains all the attachments of some $L$-bridge other than $T_i$ that is a leaf in $O_G(L)$, which contradicts the choice of $u$ and $v$. Indeed, if $T_j$ is a leaf in $O_G(L)$, then we are done. Assume $T_j$ is not a leaf. Since $O_G(L)$ is a tree, there exists a new leaf $T_k$ such that the unique path $P_{jk}$ in $O_G(L)$ between $T_j$ and $T_k$ does not contain the vertex $T_i$. Then by an easy inductive argument, it is easy to see that any $T_\ell \in V(P_{jk})$ (note that $T_\ell$ is not adjacent to $T_i$, i.e., $T_\ell$ and $T_i$ are not overlap) has an attachment in $L[x,y]$. Thus all attachments of $T_k$ are contained in $L[u^+,v^-]$.
	\end{proof} 
	
	Without loss of generality, by Claim~\ref{claim-uv}, we may assume that $u$ and $v$ are two vertices on $L$ such that $L[u,v]$ contains all the attachments of $T_1$ and $L[u,v]$ is as short as possible, and $T_1$ is a leaf in the overlap graph $O_G(L)$ that overlaps only with $T_{2}$. Assume 
	$$u^1_1, \ldots, u^1_{p_1}, v^1_1, \ldots, v^1_{q_1}, u^2_1, \ldots, u^2_{p_2}, v^2_1, \ldots, v^2_{q_2}, \ldots, v^{k-1}_1, \ldots, v^{k-1}_{q_{k-1}}, u^k_1, \ldots, u^k_{p_k},$$ are the attachments listed in clockwise cyclic order  on $L[u,v]$, where $k \geq 2$, $p_i, q_i \geq 1$,  the vertices $u^i_j$'s are attachments of $T_1$, and the vertices $v^i_j$'s are (partial) attachments of $T_2$.

	For two attachments $x, y$ on $L$, we say that $x$ and $y$ \emph{witness} each other if there are no other attachments on the segments $L[x, y]$ or $L[y, x]$. 
	By our choice of $u$ and $v$, we have $u = u^1_1$ and $v = u^k_{p_k}$; 
	hence, only $u^1_1$ or $u^k_{p_k}$ can possibly witness attachments of $T_i$ for $3 \le i \le s$.
	
	The remainder of the proof is divided into three subsections, depending on 
	how many vertices in $\{u^1_1, u^k_{p_k}\}$ can witness attachments of $T_i$ 
	for some $3 \le i \le s$.

	\subsection{Neither $u^1_1$ nor $u^k_{p_k}$ witness attachments of $T_i$ for $3 \le i \le s$}
	\label{sec-case 1} 
	
	Starting from $u^1_1$ and moving in the counterclockwise direction, let $a$ be the first attachment of $T_2$ that can be witnessed by $u^1_1$.  
	Starting from $u^k_{p_k}$ and moving in the clockwise direction, let $b$ be the first attachment of $T_2$ that may witness an attachment of $T_i$ for $3 \le i \le s$. See Fig.~\ref{fig-case1} for an illustration. Let $T_{21}$ be the subtree of $T_2$ whose leaves are precisely the attachments on $L[b,a]$, and let $T_{22}$ be the subtree of $T_2$ whose leaves are precisely the attachments on $L[a,b]$. We have the following.
	
	\begin{claimA}
		\label{claim-union} 
		$E(T_2) = E(T_{21}) \cup E(T_{22})$. 
	\end{claimA}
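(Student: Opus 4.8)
The approach is to establish the two inclusions separately. One direction is immediate: $T_{21}$ and $T_{22}$ are by construction subtrees of $T_2$, so $E(T_{21}) \cup E(T_{22}) \subseteq E(T_2)$. The content of the claim is therefore the reverse inclusion, that every edge of $T_2$ lies in $E(T_{21})$ or in $E(T_{22})$. Before analysing individual edges, I would isolate the only two properties of the leaf sets $P_1$ (the attachments of $T_2$ on the segment $L[b,a]$) and $P_2$ (the attachments of $T_2$ on $L[a,b]$) that the argument uses. First, $P_1 \cup P_2 = \partial(T_2)$: the two segments $L[b,a]$ and $L[a,b]$ together contain every vertex of the cycle $L$, so every attachment of $T_2$ lies on at least one of them. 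Second, $a \in P_1 \cap P_2$ (and likewise $b$): $a$ is an attachment of $T_2$ and is a common endpoint of the two segments. The second property — that $P_1 \cap P_2 \neq \emptyset$ — is what makes the decomposition work.

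Next I would record two standard facts about trees. (a) For $i \in \{1,2\}$, the subtree $T_{2i}$ is the minimal subtree of $T_2$ spanning $P_i$, equivalently $T_{2i} = \bigcup_{x,y \in P_i} T_2[x,y]$; consequently an edge $e \in E(T_2)$ lies in $E(T_{2i})$ if and only if both components of $T_2 - e$ contain a vertex of $P_i$. (b) Each of the two components of $T_2 - e$ contains a leaf of $T_2$: if such a component is a single vertex, that vertex is an endpoint of $e$ whose only $T_2$-edge is $e$, hence a leaf; if it has at least two vertices, it has at least two leaves of its own, at most one of which is an endpoint of $e$, and any other one is then a leaf of $T_2$.

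Finally, I would argue by contradiction. Fix $e \in E(T_2)$, write $F_1, F_2$ for the components of $T_2 - e$, and suppose $e \notin E(T_{21}) \cup E(T_{22})$. By (a), some component misses $P_1$ and some component misses $P_2$. If one component $F$ misses both, then by the first property $F$ misses $P_1 \cup P_2 = \partial(T_2)$, contradicting (b). Otherwise, after relabelling, $V(F_1) \cap P_1 = \emptyset$ while $V(F_2) \cap P_2 = \emptyset$; since $a \in P_1$ we get $a \notin V(F_1)$, hence $a \in V(F_2)$ because $V(T_2) = V(F_1) \cup V(F_2)$ is a disjoint union, but $a \in P_2$ forces $a \notin V(F_2)$ — a contradiction. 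Hence $e \in E(T_{21}) \cup E(T_{22})$, which proves the claim. The only point that calls for care is the handful of degenerate configurations — when $P_1$ or $P_2$ is a singleton (so the corresponding $T_{2i}$ is a lone vertex and contributes no edges), or when $a$ coincides with $b$ (so that one of the two segments is all of $L$); since the argument above never uses that either $T_{2i}$ has an edge and relies only on $a \in P_1 \cap P_2$, these cases go through verbatim, and I expect this bookkeeping to be the main — and only mild — obstacle.
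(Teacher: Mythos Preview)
Your proof is correct and follows essentially the same approach as the paper's: both argue the nontrivial inclusion by contradiction, using that $\partial(T_{21}) \cup \partial(T_{22}) = \partial(T_2)$ and $\partial(T_{21}) \cap \partial(T_{22}) \neq \emptyset$, and then splitting into the case where one component of $T_2 - e$ misses all leaves versus the case where the two subtrees lie in different components. Your write-up is somewhat more explicit about the underlying tree facts (your (a) and (b)) and about the degenerate cases, but the logical structure is the same.
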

	\begin{proof}
		By the definition of $T_{21}$ and $T_{22}$,  $\partial(T_2) = \partial(T_{21}) \cup \partial(T_{22})$ and $\partial(T_{21}) \cap \partial(T_{22}) \neq \emptyset$. Since $E(T_{21}) \cup E(T_{22}) \subseteq E(T_2)$, it suffices to show that $E(T_2) \subseteq E(T_{21}) \cup E(T_{22})$. Suppose to the contrary that there exists an edge $e \in E(T_2)$, but $e \notin E(T_{21}) \cup E(T_{22})$. Then either $T_{21}$ and $T_{22}$ are contained in the two components of $T_2-e$, respectively, or $T_{21} \cup T_{22}$ is contained in one component of $T_2-e$. The former case contradicts the condition that $\partial(T_{21}) \cap \partial(T_{22}) \neq \emptyset$, and the latter case contradicts the condition that $\partial(T_2) = \partial(T_{21}) \cup \partial(T_{22})$.
	\end{proof}

	\begin{figure}[h]
		\centering 
		\begin{tikzpicture}[>=latex,
			Tnode/.style={circle, draw= none, fill= magenta!50,opacity=0.3, minimum size=1.3cm, inner sep=0pt}, 
			bnode/.style={circle, draw=black,fill=green, minimum size=0.8mm, inner sep=0pt},
			snode/.style={circle, draw=black,fill=red, minimum size=0.8mm, inner sep=0pt},
			rnode/.style={circle, draw=black,fill=orange, minimum size=0.8mm, inner sep=0pt},
			Bnode/.style={circle, draw=black,fill=white, minimum size=1.5mm, inner sep=0pt}] 
			
			\draw ($(0,0)+(0:4.5 and 2.5)$) arc (0:360:4.5 and 2.5); 
			
			\node [Bnode] (u11) at ($(0,0)+(130:4.5 and 2.5)$){}; 
			\node [Bnode] (u12) at ($(0,0)+(135:4.5 and 2.5)$){};
			\node [Bnode] (u13) at ($(0,0)+(140:4.5 and 2.5)$){};
			
			\node [Bnode] (u21) at ($(0,0)+(80:4.5 and 2.5)$){};
			\node [Bnode] (u22) at ($(0,0)+(85:4.5 and 2.5)$){};
			\node [Bnode] (u23) at ($(0,0)+(90:4.5 and 2.5)$){};
			\node [Bnode] (u24) at ($(0,0)+(95:4.5 and 2.5)$){};
			
			\node [Bnode] (u31) at ($(0,0)+(30:4.5 and 2.5)$){};
			\node [Bnode] (u32) at ($(0,0)+(35:4.5 and 2.5)$){};
			\node [Bnode] (u33) at ($(0,0)+(40:4.5 and 2.5)$){}; 
			
			\node [Bnode] (u41) at ($(0,0)+(-30:4.5 and 2.5)$){};
			\node [Bnode] (u42) at ($(0,0)+(-35:4.5 and 2.5)$){};
			\node [Bnode] (u43) at ($(0,0)+(-40:4.5 and 2.5)$){}; 
			
			\node [Bnode] (u51) at ($(0,0)+(-77:4.5 and 2.5)$){};
			\node [Bnode] (u52) at ($(0,0)+(-82:4.5 and 2.5)$){};
			\node [Bnode] (u53) at ($(0,0)+(-87:4.5 and 2.5)$){}; 
			
			\node [Bnode] (u61) at ($(0,0)+(-130:4.5 and 2.5)$){};
			\node [Bnode] (u62) at ($(0,0)+(-135:4.5 and 2.5)$){};
			\node [Bnode] (u63) at ($(0,0)+(-140:4.5 and 2.5)$){}; 
			
			\node [Bnode] (v11) at ($(0,0)+(55:4.5 and 2.5)$){};
			\node [Bnode] (v12) at ($(0,0)+(60:4.5 and 2.5)$){};
			\node [Bnode] (v13) at ($(0,0)+(65:4.5 and 2.5)$){};

			\node [Bnode] (v21) at ($(0,0)+(0:4.5 and 2.5)$){};
			\node [Bnode] (v22) at ($(0,0)+(5:4.5 and 2.5)$){};
			\node [Bnode] (v23) at ($(0,0)+(10:4.5 and 2.5)$){};
			
			\node [Bnode] (v31) at ($(0,0)+(-55:4.5 and 2.5)$){};
			\node [Bnode] (v32) at ($(0,0)+(-65:4.5 and 2.5)$){}; 
			\node [Bnode] (v33) at ($(0,0)+(-60:4.5 and 2.5)$){}; 
			
			\node [Bnode] (w11) at ($(0,0)+(105:4.5 and 2.5)$){};
			\node [Bnode] (w12) at ($(0,0)+(110:4.5 and 2.5)$){};
			\node [Bnode] (w13) at ($(0,0)+(115:4.5 and 2.5)$){};

			\node [Bnode] (w21) at ($(0,0)+(175:4.5 and 2.5)$){};
			\node [Bnode] (w22) at ($(0,0)+(180:4.5 and 2.5)$){};
			\node [Bnode] (w23) at ($(0,0)+(185:4.5 and 2.5)$){};
			
			\node [Bnode] (w31) at ($(0,0)+(-105:4.5 and 2.5)$){};
			\node [Bnode] (w32) at ($(0,0)+(-110:4.5 and 2.5)$){};
			\node [Bnode] (w33) at ($(0,0)+(-115:4.5 and 2.5)$){};

			\node [snode] (T1) at (0.5,0){};
			\node at (0,-0.75){$T_2$};
			
			\draw (T1) -- (u11);
			\draw (0.3,0) -- (u12);
			\draw (0,0) -- (u13);
			
			\draw (0.7,0) -- (u21);
			\draw (0.6,0) -- (u22);
			\draw (0.55,0) -- (u23);
			\draw (T1) -- (u24);
			
			\draw (0.5,-0.1) -- (u31);
			\draw (T1) -- (u32);
			\draw (0.5,0.1) -- (u33);

			\draw (T1) -- (u41);
			\draw (T1) -- (u42);
			\draw (T1) -- (u43); 
			
			\draw (T1) -- (u51);
			\draw (T1) -- (u52);
			\draw (T1) -- (u53); 
			
			\draw (T1) -- (u61);
			\draw (T1) -- (u62);
			\draw (T1) -- (u63); 
			 
			\coordinate   (Am) at ($(0.5,0)+(0:0.65)$);
			\coordinate   (Bm) at (0.5,-0.7);
			\coordinate   (Cm) at (0.3,-0.6);
			\coordinate   (Dm) at (0.1,-0.5);
			\coordinate   (Em) at (-0.1,-0.4);
			\coordinate   (Fm) at (-0.2,-0.1);
			\coordinate   (Gm) at (0.3,0.1);
			\coordinate   (Hm) at ($(0.5,0)+(90:0.6)$);
			
			\draw[draw=none, fill=brown!30] (Am) to [closed, curve through = {(Bm) (Cm) (Dm) (Em) (Fm) (Gm) (Hm)}] (Am);

			\node at (2.9,-0.5){$T_1$};  
			
			\node[snode]  (A0) at (0.5,0){};	 
			\node[snode]  (A1) at ($(0.5,0)+(45:0.15)$) {};  
			\node[snode]  (A3) at ($(0.5,0)+(210:0.15)$) {};
			\draw (A0)--(A1); 
			\draw (A0)--(A3); 
			\node[snode]  (A4) at ($(0.5,0)+(-30:0.15)$) {};
			\draw (A0)--(A4); 
			\foreach \t in {1,6,9}{
				\node[snode]  (B\t) at ($(0.5,0)+(30*\t+30:0.33)$) {};} 
			\node[snode]  (B10) at ($(0.5,0)+(340:0.33)$){}; 
			\draw (A1)--(B1);   
			\draw (A3)--(B6);  
			\draw (A4)--(B9);
			\draw (A4)--(B10); 
			\foreach \t in {1,2,6,7,...,12}{
				\node[snode]  (C\t) at ($(0.5,0)+(30*\t+15:0.56)$) {};}
			\draw (B1)--(C1); 	
			\draw (B1)--(C2); 
			\draw (B6)--(C6); 	
			\draw (B6)--(C7); 
			\draw (A0)--(C8); 
			\draw (B9)--(C9); 	
			\draw (B9)--(C10);
			\draw (B10)--(C11);
			\draw (A4)--(C12);

			\node [snode] (T2) at (2.5,0){}; 
			\draw (T2) -- (v11);
			\draw (T2) -- (v12);
			\draw (T2) -- (v13);
			
			\draw (T2) -- (v21);
			\draw (T2) -- (v22);
			\draw (T2) -- (v23);
			\draw (T2) -- (v31);
			\draw (T2) -- (v32);
			\draw (T2) -- (v33);

			\coordinate   (Ar) at (2.6,0.52);
			\coordinate   (Br) at (2.2,0.25);
			\coordinate   (Cr) at (2,-0.2);
			\coordinate   (Dr) at (2.1,-0.4);
			\coordinate   (Er) at (2.3,-0.3);
			\coordinate   (Fr) at (2.5,-0.15);
			\coordinate   (Gr) at (2.8,-0.1);
			\coordinate   (Hr) at (3,0);
			
			\draw[draw=none, fill=cyan!30] (Ar) to [closed, curve through = {(Br) (Cr) (Dr) (Er) (Fr) (Gr) (Hr)}] (Ar);
			\node[rnode]  (D0) at (2.5,0){};	 
			
			\node[rnode]  (D1) at ($(2.5,0)+(60:0.13)$) {};
			\draw (D0)--(D1);
			
			\node[rnode]  (D2) at ($(2.5,0)+(135:0.13)$) {};
			\draw (D0)--(D2);
			
			\node[rnode]  (D3) at ($(2.5,0)+(210:0.13)$) {};
			\draw (D0)--(D3);

			\foreach \t in {1,6}{
				\node[rnode]  (E\t) at ($(2.5,0)+(30*\t+30:0.27)$) {};}
			
			\node[rnode]  (E3) at ($(2.5,0)+(130:0.27)$) {}; 
			
			\draw (D1)--(E1); 	
			\draw (D2)--(E3);
			
			\draw (D3)--(E6);  
			
			\foreach \t in {1,2,6,7,12}{
				\node[rnode]  (F\t) at ($(2.5,0)+(30*\t+15:0.46)$) {};}
			\draw (E1)--(F1); 	
			\draw (E1)--(F2);  
			\draw (E6)--(F6); 	
			\draw (E6)--(F7); 
			\draw (D0)--(F12);

			\node [snode] (T0) at (-2,0){};	
			\draw (T0) -- (w11);
			\draw (T0) -- (w12);
			\draw (T0) -- (w13);
			\draw (T0) -- (w21);
			\draw (T0) -- (w22);
			\draw (T0) -- (w23);
			\draw (T0) -- (w31);
			\draw (T0) -- (w32); 
			\draw (T0) -- (w33);
			
			\node at (-3.2,-0.9){$\bigcup_{i=3}^{s}T_i$};
			
			\coordinate   (A) at (-3.5,0.5);
			\coordinate   (B) at (-3.7,0.1);
			\coordinate   (C) at (-3.6,-0.4);
			\coordinate   (D) at (-1.8,-0.5);
			\coordinate   (E) at (-1.1,-0.5);
			\coordinate   (F) at (-1.2,0);
			\coordinate   (G) at (-1.3,0.5);
			\coordinate   (H) at (-1.7,0.5); 
			
			\draw[draw=none, fill=magenta!40] (A) to [closed, curve through = {(B) (C) (D) (E) (F) (G) (H)}] (A);
			
			\node[bnode] (a0) at (-1.5,0){};
			\node[bnode] (a1) at (-1.5,0.17){};
			\node[bnode] (a2) at (-1.4,0.36){};
			\node[bnode] (a3) at (-1.6,0.36){};
			\node[bnode] (a4) at (-1.7,0.1){};
			\node[bnode] (a5) at (-1.6,-0.2){};
			\node[bnode] (a6) at (-1.4,-0.18){};
			
			\node[bnode] (a7) at (-1.3,-0.4){};
			\node[bnode] (a8) at (-1.5,-0.4){};
			\node[bnode] (a9) at (-1.7,-0.4){};
			
			\foreach \t in {1,4,5,6}{
				\draw (a0)--(a\t);} 
			\draw (a1)--(a2);
			\draw (a1)--(a3);
			\draw (a6)--(a7);
			\draw (a5)--(a8);
			\draw (a5)--(a9);

			\node[bnode]  (H0) at (-3,0){};	 
			\node[bnode]  (H2) at ($(-3.2,0)+(135:0.13)$) {};
			\draw (H0)--(H2); 
			\node[bnode]  (H3) at ($(-3.2,0)+(210:0.13)$) {};
			\draw (H0)--(H3);   
			\node[bnode]  (I6) at ($(-3.2,0)+(210:0.27)$) {};  
			\node[bnode]  (I3) at ($(-3.2,0)+(130:0.27)$) {};
			\draw (H2)--(I3); 
			\draw (H3)--(I6);   
			\foreach \t in {3,4,...,8}{
				\node[bnode]  (J\t) at ($(-3.2,0)+(30*\t+15:0.46)$) {};}

			\draw (I3)--(J3); 	
			\draw (I3)--(J4);
			\draw (I3)--(J5);
			
			\draw (I6)--(J6); 	
			\draw (I6)--(J7);
			\draw (H0)--(J8);

			\node[bnode]  (P0) at (-2.8,0){};	  
			\node[bnode]  (P1) at ($(-2.8,0)+(60:0.13)$) {};
			\draw (P0)--(P1);  
			\node[bnode]  (P4) at ($(-2.8,0)+(320:0.13)$) {};
			\draw (P0)--(P4); 
			\foreach \t in {1,9}{
				\node[bnode]  (Q\t) at ($(-2.8,0)+(30*\t+30:0.27)$) {};}  
			\node[bnode]  (Q10) at ($(-2.8,0)+(340:0.27)$) {}; 
			\draw (P1)--(Q1); 	 
			\draw (P4)--(Q9);
			\draw (P4)--(Q10); 
			\foreach \t in {1,2,8,9,10,11,12}{
				\node[bnode]  (R\t) at ($(-2.8,0)+(30*\t+15:0.46)$) {};}
			\draw (Q1)--(R1); 	
			\draw (Q1)--(R2);
			
			\node at (-2,0){$\cdots$}; 
			\draw (P0)--(R12);
			
			\draw (Q9)--(R9); 	
			\draw (Q9)--(R10);
			\draw (Q10)--(R11);
			\draw (P4)--(R8);

			\node at ($(0,0)+(80:4.5 and 2.7)$){$a$}; 
			\node at ($(0,0)+(272:4.5 and 2.8)$){$b$}; 
			\node at ($(0,0)+(65:4.5 and 2.8)$){$u^1_1$}; 
			\node at ($(0,0)+(53:4.5 and 3)$){$u^1_{p_1}$};
			\node at ($(0,0)+(305:4.5 and 3)$){$u^k_1$}; 
			\node at ($(0,0)+(295:4.5 and 3)$){$u^k_{p_k}$}; 
			\node [rotate = 60] at ($(0,0)+(-10:3.5 and 2.9)$){$\cdots$}; 
			\node at ($(0,0)+(40:4.5 and 3)$){$v^1_1$}; 
			\node at ($(0,0)+(25:4.6 and 3.4)$){$v^1_{q_1}$}; 
			\node at ($(0,0)+(338:4.7 and 3.4)$){$v^{k-1}_1$};
			\node at ($(0,0)+(325:4.6 and 3.4)$){$v^{k-1}_{q_{k-1}}$};  
		\end{tikzpicture}  
		\caption{Case 1}
		\label{fig-case1} 
	\end{figure} 
	
	Let $G_1$ be the subgraph of $G$ induced by the edges in $L,T_{21},T_3,\ldots,T_s$ and $G_2$ be the subgraph of $G$ induced by the edges in $L,T_{22},T_1$. By induction hypothesis, there exists $\C^i \subseteq \C(G_i)$ which is feasible for $(G_i, L)$, $i=1,2$. We have the following.
	
	\begin{claimA}\label{claim-four cycles}
		The following statements hold.
		\begin{itemize}
			\item $\vv{C}_1=\vv{T_2}[b,a] \cup \vv{L}[a,b]$ is a dicycle of type $00$ with respect to $a$, of type $11$ with respect to $b$ in $\mathcal{C}^1$;
			\item $\vv{C}_2= \vv{T_2}[a,b] \cup \cevv{L}[a,b]$ is a dicycle of type $01$ with respect to $a$,  of type $10$ with respect to $b$ in $\mathcal{C}^1$;
			\item $\vv{C}_3 = \vv{T_2}[a,b] \cup \vv{L}[b,a]$ is a dicycle of type $11$ with respect to $a$,  of type $00$ with respect to $b$ in $\mathcal{C}^2$;
			\item $\vv{C}_4 = \vv{T_2}[b,a] \cup \cevv{L}[b,a]$ is a dicycle of type $10$ with respect to $a$,  of type $01$ with respect to $b$ in $\mathcal{C}^2$;
		\end{itemize} 
	\end{claimA}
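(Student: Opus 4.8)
The plan is to split the claim into its easy part---that each $\vv{C}_t$ is a genuine dicycle of the stated types at $a$ and at $b$---and its substantive part, that $\vv{C}_1,\vv{C}_2\in\mathcal{C}^1$ and $\vv{C}_3,\vv{C}_4\in\mathcal{C}^2$.

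For the easy part I would first use the proof of Claim~\ref{claim-union}: since $a,b\in\partial(T_{21})\cap\partial(T_{22})$, the $a$--$b$ path of $T_2$ coincides with the $T_{21}$-path and with the $T_{22}$-path between these leaves, and its internal vertices avoid $L$; hence $\vv{T_2}[b,a]\cup\vv{L}[a,b]$ is a dicycle of $G_1$ meeting $L$ exactly in the arc $L[a,b]$, and $\vv{T_2}[a,b]\cup\vv{L}[b,a]$ is a dicycle of $G_2$ meeting $L$ exactly in $L[b,a]$; thus all four dicycles lie in $\C(G_1)$ or $\C(G_2)$. Reading off the type at $a$ and at $b$ is then a direct inspection of which $L$-edge and which bridge-edge the dicycle uses there and in which direction; for instance $\vv{C}_1$ enters $a$ along the unique $T_2$-edge at $a$ and leaves along $(a,a^+)$, so it is of type $00$ at $a$, and enters $b$ along $(b^-,b)$ and leaves along the $T_2$-edge at $b$, so it is of type $11$ at $b$, and the other three are identical computations.

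The substantive part is where the work lies, and I would run it through \ref{C4}. Let $\vv{D}\in\mathcal{C}^1$ be the unique dicycle of type $00$ at $a$, so $\vv{D}$ contains $(w_a,a)$ and $(a,a^+)$, where $w_a$ is the unique $T_2$-neighbour of $a$; I claim $\vv{D}=\vv{C}_1$, and the remaining three cases go the same way. The crucial point---which I would isolate as a lemma---is that, for $a,b$ as chosen in Section~\ref{sec-case 1} and under the Case~$1$ hypothesis, every internal vertex of the arc $L[a,b]$ has degree $2$ in $G_1$: such a vertex is an ordinary vertex, or an attachment of $T_1$ (whose edges are not in $G_1$), or a $T_2$-attachment which, lying on $L[a,b]$ and hence off $L[b,a]$, is an attachment of $T_{22}$ but not of $T_{21}$ (and the edges of $T_{22}$ are likewise not in $G_1$), while the choice of $b$ as the first $T_2$-attachment clockwise from $u^k_{p_k}$ that witnesses an attachment of some $T_i$ with $i\ge3$ excludes internal attachments of such $T_i$ altogether. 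Since $\vv{D}$ contains $(a,a^+)$, it is forced to run along all of $\vv{L}[a,b]$ and reach $b$. By \ref{C3}, $\vv{D}$ meets $T_{21}$ in exactly two attachments, one of which is $a$; if $\vv{D}$ leaves $b$ along its $T_{21}$-edge, then $b$ is the other, so $\vv{D}=\vv{T_{21}}[b,a]\cup\vv{L}[a,b]=\vv{T_2}[b,a]\cup\vv{L}[a,b]=\vv{C}_1$.

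I expect the one real obstacle to be ruling out the alternative, where on reaching $b$ the dicycle $\vv{D}$ continues along $L$ through $b^+$ rather than turning into $T_{21}$. I would close this off with \ref{C1} and \ref{C4} at the vertex $b$: by \ref{C1} the edge $bb^+$ lies in exactly two dicycles of $\mathcal{C}^1$, while \ref{C4} already supplies two of them, namely the dicycles of type $00$ and of type $01$ at $b$, both of which contain $bb^+$ and which are distinct; in the alternative scenario $\vv{D}$, whose incoming arc at $b$ would be $(b^-,b)$, would be a third, a contradiction. Hence $\vv{D}=\vv{C}_1$. The remaining memberships follow by the same template: $\vv{C}_2$ is the unique dicycle of type $01$ at $a$ in $\mathcal{C}^1$, forced along $\cevv{L}[a,b]$ and pinned down at $b$ exactly as above; and $\vv{C}_3,\vv{C}_4$ are the unique dicycles of type $11$ and of type $10$ at $a$ in $\mathcal{C}^2$, using the mirror observation that every internal vertex of $L[b,a]$ has degree $2$ in $G_2$ (the attachments of the $T_i$ with $i\ge3$ and the $T_{21}$-attachments internal to $L[b,a]$ belong to no bridge of $G_2$, and no attachment of $T_1$ or of $T_{22}$ is internal to $L[b,a]$). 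The only change in bookkeeping on the $G_2$ side is that $b^-$ is itself an attachment of $T_{22}$, which is harmless since the forcing argument and the \ref{C1}--\ref{C4} step concern only the edges incident with $b$.
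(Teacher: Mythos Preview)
Your proof is correct and follows essentially the same line as the paper: invoke \ref{C4} to produce the candidate dicycle, argue that no attachments of $G_1$ (resp.\ $G_2$) lie in the interior of $L[a,b]$ (resp.\ $L[b,a]$) so that the dicycle is forced along that arc, and then pin down the bridge-path.

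The one place you work harder than necessary is the ``alternative'' at $b$. Once $\vv{D}$ has been forced along $\vv{L}[a,b]$, the vertex $b$ already lies on $\vv{D}$; since $b$ is a $T_{21}$-attachment, \ref{C3} (which counts attachments as vertices, regardless of whether the bridge-edge at that vertex is used) immediately forces $a$ and $b$ to be the two $T_{21}$-attachments on $\vv{D}$, so the $T_{21}$-portion of $\vv{D}$ must be $T_{21}[b,a]$ and the cycle closes. This is exactly the paper's one-line use of \ref{C3}, and it makes your \ref{C1}$+$\ref{C4} counting argument at $bb^+$ (and the corresponding bookkeeping about $b^-$ on the $G_2$ side) superfluous---though that argument is also valid.
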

	
	\begin{proof}
		We only prove the first statement, the rest can be proven similarly. By \ref{C4}, there exists a dicycle $\vv{C} \in \C^1$ which is of type $00$ with respect to $a$. Hence $(a,a^+)$ is an arc in $\vv{C}$. As in $G_1$, there is not attachments except $a$ and $b$ in $L[a,b]$, so by \ref{C3},  $\vv{C} = \vv{T_2}[b,a] \cup \vv{L}[a,b]$. It is clear that this dicycle is of type $11$ with respect to $b$.
	\end{proof}
	
	Then we let $\mathcal{C}=\mathcal{C}^1 \cup \mathcal{C}^2\setminus \{\vv{C}_1,\vv{C}_2,\vv{C}_3,\vv{C}_4\}$, where $\vv{C}_i$ is defined  as in Claim~\ref{claim-four cycles}. We now verify that $\mathcal{C}$ satisfies \ref{C1}-\ref{C4}.
	
	For each edge $e \in E(L)$, $|\mathcal{C}^1_e|=|\mathcal{C}^2_e|=2$, and it is also covered by exactly two  dicycles in $\{\vv{C}_1,\vv{C}_2,\vv{C}_3,\vv{C}_4\}$, so $|\mathcal{C}_e|=|\mathcal{C}^1_e|+|\mathcal{C}^2_e|-2=2$. Consider an arbitrary edge $e \in E(G) \setminus E(L)$. Since $E(T_2) = E(T_{21}) \cup E(T_{22})$ by Claim~\ref{claim-union}, we have $e \in E(G_1) \cup E(G_2)$. If $e \in E(G)\setminus [E(T_2[a,b]) \cup E(L)]$, then $e \in E(G_i) \setminus E(L)$ for some $ i \in \{1,2\}$ and $e \notin E(C_j)$ for any $j \in \{1,2,3,4\}$. Therefore $|\mathcal{C}_e| \geq |\mathcal{C}^i_e| \geq 4$, where the last inequality follows from \ref{C1} for $(G^i,L)$. If $e \in E(T_2[a,b])$, since $E(T_2[a,b]) \subseteq  E(T_{21}) \cap E(T_{22})$, and $e \in E(C_i)$ for each $i \in \{1,2,3,4\}$, then we can obtain $$|\mathcal{C}_e|=|\mathcal{C}^1_e|+|\mathcal{C}^2_e|-4 \geq 4+4-4 =4.$$ Therefore, \ref{C1} holds.
	
	Since each dicycle in $\mathcal{C}^1 \cup \mathcal{C}^2$  satisfies \ref{C2} by induction, $\mathcal{C}$ is a subset of $\mathcal{C}^1 \cup \mathcal{C}^2$, $\C$ satisfies \ref{C2} too.
	
	By Claim~\ref{claim-four cycles} and that \ref{C4} holds for $\C^1$, $\vv{C}_1$ and $\vv{C}_2$ are the only two dicycles in $\C^1$ containing vertices in $L[a^+,b^-]$. Thus every dicycle in $\mathcal{C}^1 \setminus \{\vv{C}_1, \vv{C}_2\}$ contains no attachments of $T_1$. On the other hand, as $\mathcal{C}^1$ satisfies \ref{C3} by induction hypothesis, every dicycle in $\mathcal{C}^1 \setminus \{\vv{C}_1, \vv{C}_2\}$ contains either no or two attachments of  $T_i$ for $i=2, 3, \ldots, s$. Similarly, we can verify that each dicycle in $\mathcal{C}^2 \setminus \{\vv{C}_3, \vv{C}_4\}$ contains no attachments of $T_i$ for $i =3,4,\ldots, s$ and contains either no or exactly two attachments of each of $T_1$ and $T_2$. Thus $\C$ satisfies \ref{C3}.
	
	For each $ij \in \mathbb{Z}^2_2$ and attachment $w$ distinct from $a$ or $b$, every dicycle of type $ij$ with respect to $w$ in $\mathcal{C}^1$ or $\mathcal{C}^2$ is kept in $\mathcal{C}$. For $a$ or $b$, there are exactly two dicycles of type $ij$ with respect to it in $\mathcal{C}^1 \cup \mathcal{C}^2$, but we removed one for each type by Claim \ref{claim-four cycles}. Thus,  $\mathcal{C}$ satisfies \ref{C4}.  
	
	\subsection{Exactly one of $u^1_1$ and $u^k_{p_k}$ witnesses attachments of $T_i$ for $3 \le i \le s$}
	\label{sec-case 2} 
	
	Without loss of generality, we assume that only $u^1_1$ witnesses a vertex $w$, which is an attachment of $T_i$ for some $3 \le i \le s$. Starting from $u^k_{p_k}$ and moving in the clockwise direction, let $b$ be the first attachment of $T_2$ that witnesses an attachment of $T_i$ for some $3 \le i \le s$. Let $x$ be the unique vertex in $T_2$ adjacent to $b$. We construct a new graph $G^*$ obtained from $G$ by subdividing the edge $ww^+$ to $waw^+$ and adding a new edge between $x$ and the new vertex $a$. See Fig.~\ref{fig-case2} for illustration. Let $L'$ be the corresponding subdivision of $L$. Define $T^*_2 = T_2 \cup xa$, and let $T^*_{21}$ be the subtree of $T^*_2$ whose leaves are precisely the attachments in $L'[b,a]$, and $T^*_{22}$ be the subtree of $T^*_2$ whose leaves are precisely the attachments in $L'[a,b]$. By arguments similar to those in Claim~\ref{claim-union}, we obtain the following.
	
	\begin{claimB}
		\label{claim2-union} 
		$E(T^*_2) = E(T^*_{21}) \cup E(T^*_{22})$. 
	\end{claimB}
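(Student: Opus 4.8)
The plan is to reproduce the proof of Claim~\ref{claim-union} in the subdivided setting, after recording the two structural facts that drive that argument. First I would note that $T^*_2 = T_2 \cup xa$ is obtained from the tree $T_2$ by attaching the single pendant edge $xa$ at the vertex $x$, so $T^*_2$ is again a tree, and its set of leaves is $\partial(T^*_2) = (\partial(T_2)\setminus\{x\})\cup\{a\}$. In particular every leaf of $T^*_2$ lies on the subdivided cycle $L'$: the old leaves of $T_2$ lie on $L\subseteq L'$, and the new leaf $a$ is precisely the subdivision vertex inserted on $ww^+$. Consequently the subtrees $T^*_{21}$ and $T^*_{22}$ are well defined (as the minimal subtrees of $T^*_2$ spanning the attachments of $T^*_2$ on $L'[b,a]$ and on $L'[a,b]$, respectively), and each has leaf set exactly the corresponding attachment set, since a minimal subtree spanning a set of at least two leaves of a tree has that set as its leaves.

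Next I would verify the two facts. Since $L' = L'[b,a]\cup L'[a,b]$ and every leaf of $T^*_2$ lies on $L'$, each such leaf lies on at least one of the two arcs, so $\partial(T^*_2) = \partial(T^*_{21})\cup\partial(T^*_{22})$. For the intersection, both $a$ and $b$ are attachments of $T^*_2$ — $a$ via the new edge $xa$, and $b\in\partial(T_2)$ — and both are common endpoints of the arcs $L'[b,a]$ and $L'[a,b]$; hence $\{a,b\}\subseteq\partial(T^*_{21})\cap\partial(T^*_{22})$, which is therefore nonempty.

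With these in hand the argument of Claim~\ref{claim-union} applies verbatim. The inclusion $E(T^*_{21})\cup E(T^*_{22})\subseteq E(T^*_2)$ is immediate, so suppose some $e\in E(T^*_2)$ lies in neither $E(T^*_{21})$ nor $E(T^*_{22})$. Deleting $e$ breaks $T^*_2$ into two components, each of which contains a leaf of $T^*_2$; then either $T^*_{21}$ and $T^*_{22}$ lie in different components, contradicting that they share the leaf $a$, or $T^*_{21}\cup T^*_{22}$ lies in one component, contradicting $\partial(T^*_2)=\partial(T^*_{21})\cup\partial(T^*_{22})$ (the leaves of $T^*_2$ cannot all lie on one side of an edge of the tree $T^*_2$). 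Either way we reach a contradiction. I do not expect a genuine obstacle here; the only care needed is the bookkeeping around the new vertex $a$ — confirming it is truly a leaf of $T^*_2$ and that it (together with $b$) witnesses $\partial(T^*_{21})\cap\partial(T^*_{22})\neq\emptyset$ — together with the degenerate cases where a spanned attachment set is a single vertex (trivial) or where $T_2$ is a single edge, which is handled by the same remark.
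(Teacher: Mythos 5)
Your proposal is correct and follows essentially the same route as the paper, which proves this claim simply by invoking ``arguments similar to those in Claim~1a'': you reproduce that deletion-of-$e$ argument verbatim after checking that $\partial(T^*_2)=\partial(T^*_{21})\cup\partial(T^*_{22})$ and that $a,b\in\partial(T^*_{21})\cap\partial(T^*_{22})$. The extra bookkeeping you supply --- that $T^*_2=T_2\cup xa$ is a tree whose leaves all lie on $L'$ with $a$ the new leaf --- is exactly the verification the paper leaves implicit.
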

	
	\begin{figure}[h]
		\centering
		\begin{tikzpicture}[>=latex,
			Tnode/.style={circle, draw= none, fill= magenta!50,opacity=0.3, minimum size=1.3cm, inner sep=0pt}, 
			bnode/.style={circle, draw=black,fill=green, minimum size=0.8mm, inner sep=0pt},
			snode/.style={circle, draw=black,fill=red, minimum size=0.8mm, inner sep=0pt},
			bluenode/.style={circle, draw=black,fill=red, minimum size=1.5mm, inner sep=0pt},
			rnode/.style={circle, draw=black,fill=orange, minimum size=0.8mm, inner sep=0pt},
			Bnode/.style={circle, draw=black,fill=white, minimum size=1.5mm, inner sep=0pt}] 
			
			\begin{scope}[yshift=4cm]  
				\draw ($(0,0)+(0:4.5 and 2.5)$) arc (0:360:4.5 and 2.5); 
				
				\node [Bnode] (u11) at ($(0,0)+(130:4.5 and 2.5)$){}; 
				\node [Bnode] (u12) at ($(0,0)+(135:4.5 and 2.5)$){};
				\node [Bnode] (u13) at ($(0,0)+(140:4.5 and 2.5)$){};
				
				\node [Bnode] (u31) at ($(0,0)+(30:4.5 and 2.5)$){};
				\node [Bnode] (u32) at ($(0,0)+(35:4.5 and 2.5)$){};
				\node [Bnode] (u33) at ($(0,0)+(40:4.5 and 2.5)$){}; 
				
				\node [Bnode] (u41) at ($(0,0)+(-30:4.5 and 2.5)$){};
				\node [Bnode] (u42) at ($(0,0)+(-35:4.5 and 2.5)$){};
				\node [Bnode] (u43) at ($(0,0)+(-40:4.5 and 2.5)$){}; 
				
				\node [Bnode] (u51) at ($(0,0)+(-77:4.5 and 2.5)$){};
				\node [Bnode] (u52) at ($(0,0)+(-82:4.5 and 2.5)$){};
				\node [Bnode] (u53) at ($(0,0)+(-87:4.5 and 2.5)$){}; 
				
				\node [Bnode] (u61) at ($(0,0)+(-130:4.5 and 2.5)$){};
				\node [Bnode] (u62) at ($(0,0)+(-135:4.5 and 2.5)$){};
				\node [Bnode] (u63) at ($(0,0)+(-140:4.5 and 2.5)$){}; 
				
				\node [Bnode] (v11) at ($(0,0)+(55:4.5 and 2.5)$){};
				\node [Bnode] (v12) at ($(0,0)+(60:4.5 and 2.5)$){};
				\node [Bnode] (v13) at ($(0,0)+(65:4.5 and 2.5)$){};

				\node [Bnode] (v21) at ($(0,0)+(0:4.5 and 2.5)$){};
				\node [Bnode] (v22) at ($(0,0)+(5:4.5 and 2.5)$){};
				\node [Bnode] (v23) at ($(0,0)+(10:4.5 and 2.5)$){};
				
				\node [Bnode] (v31) at ($(0,0)+(-55:4.5 and 2.5)$){};
				\node [Bnode] (v32) at ($(0,0)+(-65:4.5 and 2.5)$){}; 
				\node [Bnode] (v33) at ($(0,0)+(-60:4.5 and 2.5)$){};
				
				\node [Bnode] (w11) at ($(0,0)+(100:4.5 and 2.5)$){};
				\node [Bnode] (w12) at ($(0,0)+(105:4.5 and 2.5)$){};
				\node [Bnode] (w13) at ($(0,0)+(110:4.5 and 2.5)$){};
				\node [Bnode] (w+) at ($(0,0)+(80:4.5 and 2.5)$){};

				\node [Bnode] (w21) at ($(0,0)+(175:4.5 and 2.5)$){};
				\node [Bnode] (w22) at ($(0,0)+(180:4.5 and 2.5)$){};
				\node [Bnode] (w23) at ($(0,0)+(185:4.5 and 2.5)$){};
				
				\node [Bnode] (w31) at ($(0,0)+(-105:4.5 and 2.5)$){};
				\node [Bnode] (w32) at ($(0,0)+(-110:4.5 and 2.5)$){};
				\node [Bnode] (w33) at ($(0,0)+(-115:4.5 and 2.5)$){};

				\node [bluenode] (u21) at ($(0,0)+(90:4.5 and 2.5)$){};

				\node [snode] (T1) at (0.5,0){};
				\node at (1.2,-0.75){$T_2$};
				
				\draw (T1) -- (u11);
				\draw (0.3,0) -- (u12);
				\draw (0,0) -- (u13); 
				
				\draw (0.5,-0.1) -- (u31);
				\draw (T1) -- (u32);
				\draw (0.5,0.1) -- (u33);  
				
				\draw (T1) -- (u41);
				\draw (T1) -- (u42);
				\draw (T1) -- (u43); 
				
				\draw (T1) -- (u51);
				\draw (T1) -- (u52);

				\draw (T1) -- (u61);
				\draw (T1) -- (u62);
				\draw (T1) -- (u63);

				\coordinate   (Am) at ($(0.5,0)+(0:0.65)$);
				\coordinate   (Bm) at (0.5,-0.7);
				\coordinate   (Cm) at (0.3,-0.6);
				\coordinate   (Dm) at (0.1,-0.5);
				\coordinate   (Em) at (-0.1,-0.4);
				\coordinate   (Fm) at (-0.2,-0.1);
				\coordinate   (Gm) at (0.3,0.1);
				\coordinate   (Hm) at ($(0.5,0)+(90:0.6)$);
				
				\draw[draw=none, fill=brown!30] (Am) to [closed, curve through = {(Bm) (Cm) (Dm) (Em) (Fm) (Gm) (Hm)}] (Am);

				\node at (2.9,-0.5){$T_1$};  
				
				\node[snode]  (A0) at (0.5,0){};	 
				\node[snode]  (A1) at ($(0.5,0)+(45:0.15)$) {};  
				\node[snode]  (A3) at ($(0.5,0)+(210:0.15)$) {};
				\draw (A0)--(A1); 
				\draw (A0)--(A3); 
				\node[snode]  (A4) at ($(0.5,0)+(-30:0.15)$) {};
				\draw (A0)--(A4); 
				\foreach \t in {1,6,9}{
					\node[snode]  (B\t) at ($(0.5,0)+(30*\t+30:0.33)$) {};} 
				\node[snode]  (B10) at ($(0.5,0)+(340:0.33)$) {}; 
				\draw (A1)--(B1);   
				\draw (A3)--(B6);  
				\draw (A4)--(B9);
				\draw (A4)--(B10); 
				\foreach \t in {1,2,6,7,...,12}{
					\node[snode]  (C\t) at ($(0.5,0)+(30*\t+15:0.56)$) {};}
				\draw (B1)--(C1); 	
				\draw (B1)--(C2); 
				\draw (B6)--(C6); 	
				\draw (B6)--(C7); 
				\draw (A0)--(C8); 
				\draw (B9)--(C9); 	
				\draw (B9)--(C10);
				\draw (B10)--(C11);
				\draw (A4)--(C12);
	 
				\node [snode] (T2) at (2.5,0){}; 
				\draw (T2) -- (v11);
				\draw (T2) -- (v12);
				\draw (T2) -- (v13);
				
				\draw (T2) -- (v21);
				\draw (T2) -- (v22);
				\draw (T2) -- (v23);
				\draw (T2) -- (v31);
				\draw (T2) -- (v32); 
				\draw (T2) -- (v33);

				\coordinate   (Ar) at (2.6,0.52);
				\coordinate   (Br) at (2.2,0.25);
				\coordinate   (Cr) at (2,-0.2);
				\coordinate   (Dr) at (2.1,-0.4);
				\coordinate   (Er) at (2.3,-0.3);
				\coordinate   (Fr) at (2.5,-0.15);
				\coordinate   (Gr) at (2.8,-0.1);
				\coordinate   (Hr) at (3,0);
				
				\draw[draw=none, fill=cyan!30] (Ar) to [closed, curve through = {(Br) (Cr) (Dr) (Er) (Fr) (Gr) (Hr)}] (Ar);
				\node[rnode]  (D0) at (2.5,0){};	  
				\node[rnode]  (D1) at ($(2.5,0)+(60:0.13)$) {};
				\draw (D0)--(D1); 
				\node[rnode]  (D2) at ($(2.5,0)+(135:0.13)$) {};
				\draw (D0)--(D2); 
				\node[rnode]  (D3) at ($(2.5,0)+(210:0.13)$) {};
				\draw (D0)--(D3);

				\foreach \t in {1,6}{
					\node[rnode]  (E\t) at ($(2.5,0)+(30*\t+30:0.27)$) {};}
				
				\node[rnode]  (E3) at ($(2.5,0)+(130:0.27)$) {}; 
				
				\draw (D1)--(E1); 	
				\draw (D2)--(E3); 
				\draw (D3)--(E6);  
				
				\foreach \t in {1,2,6,7,12}{
					\node[rnode]  (F\t) at ($(2.5,0)+(30*\t+15:0.46)$) {};}
				\draw (E1)--(F1); 	
				\draw (E1)--(F2);  
				\draw (E6)--(F6); 	
				\draw (E6)--(F7); 
				\draw (D0)--(F12);  
		 
				\node [snode] (T0) at (-2,0){};	
				\draw (T0) -- (w11);
				\draw (T0) -- (w12);
				\draw (T0) -- (w13);
				\draw (T0) -- (w21);
				\draw (T0) -- (w22);
				\draw (T0) -- (w23);
				\draw (T0) -- (w31);
				\draw (T0) -- (w32); 
				\draw (T0) -- (w33);
				
				\node at (-3.2,-0.9){$\bigcup_{i=3}^{s}T_i$};
				
				\coordinate   (A) at (-3.5,0.5);
				\coordinate   (B) at (-3.7,0.1);
				\coordinate   (C) at (-3.6,-0.4);
				\coordinate   (D) at (-1.8,-0.5);
				\coordinate   (E) at (-1.1,-0.5);
				\coordinate   (F) at (-1.2,0);
				\coordinate   (G) at (-1.3,0.5);
				\coordinate   (H) at (-1.7,0.5); 
				
				\draw[draw=none, fill=magenta!40] (A) to [closed, curve through = {(B) (C) (D) (E) (F) (G) (H)}] (A);
			 
				\node[bnode] (a0) at (-1.5,0){};
				\node[bnode] (a1) at (-1.5,0.17){};
				\node[bnode] (a2) at (-1.4,0.36){};
				\node[bnode] (a3) at (-1.6,0.36){};
				\node[bnode] (a4) at (-1.7,0.1){};
				\node[bnode] (a5) at (-1.6,-0.2){};
				\node[bnode] (a6) at (-1.4,-0.18){};
				
				\node[bnode] (a7) at (-1.3,-0.4){};
				\node[bnode] (a8) at (-1.5,-0.4){};
				\node[bnode] (a9) at (-1.7,-0.4){};
				
				\foreach \t in {1,4,5,6}{
					\draw (a0)--(a\t);} 
				\draw (a1)--(a2);
				\draw (a1)--(a3);
				\draw (a6)--(a7);
				\draw (a5)--(a8);
				\draw (a5)--(a9);

				\node[bnode]  (H0) at (-3,0){};	 
				\node[bnode]  (H2) at ($(-3.2,0)+(135:0.13)$) {};
				\draw (H0)--(H2); 
				\node[bnode]  (H3) at ($(-3.2,0)+(210:0.13)$) {};
				\draw (H0)--(H3);   
				\node[bnode]  (I6) at ($(-3.2,0)+(210:0.27)$) {};  
				\node[bnode]  (I3) at ($(-3.2,0)+(130:0.27)$) {};
				\draw (H2)--(I3); 
				\draw (H3)--(I6);   
				\foreach \t in {3,4,...,8}{
					\node[bnode]  (J\t) at ($(-3.2,0)+(30*\t+15:0.46)$) {};}

				\draw (I3)--(J3); 	
				\draw (I3)--(J4);
				\draw (I3)--(J5);
				
				\draw (I6)--(J6); 	
				\draw (I6)--(J7);
				\draw (H0)--(J8);

				\node[bnode]  (P0) at (-2.8,0){};	  
				\node[bnode]  (P1) at ($(-2.8,0)+(60:0.13)$) {};
				\draw (P0)--(P1);  
				\node[bnode]  (P4) at ($(-2.8,0)+(320:0.13)$) {};
				\draw (P0)--(P4); 
				\foreach \t in {1,9}{
					\node[bnode]  (Q\t) at ($(-2.8,0)+(30*\t+30:0.27)$) {};}  
				\node[bnode]  (Q10) at ($(-2.8,0)+(340:0.27)$) {}; 
				\draw (P1)--(Q1); 	 
				\draw (P4)--(Q9);
				\draw (P4)--(Q10); 
				\foreach \t in {1,2,8,9,10,11,12}{
					\node[bnode]  (R\t) at ($(-2.8,0)+(30*\t+15:0.46)$) {};}
				\draw (Q1)--(R1); 	
				\draw (Q1)--(R2);
				
				\node at (-2,0){$\cdots$};
				\draw (P0)--(R12);
				
				\draw (Q9)--(R9); 	
				\draw (Q9)--(R10);
				\draw (Q10)--(R11);
				\draw (P4)--(R8);

				\draw (A0)--(u53);
				\draw[blue, thick] (A0) -- (u21);
				
				\node at ($(0,0)+(90:4.5 and 2.7)$){$a$}; 
				\node at ($(0,0)+(80:4.5 and 2.8)$){$w^+$}; 
				\node at ($(0,0)+(100:4.5 and 2.8)$){$w$}; 
				\node at (0.1,0.5){$x$};
				\node at ($(0,0)+(272:4.5 and 2.8)$){$b$}; 
				\node at ($(0,0)+(65:4.5 and 2.8)$){$u^1_1$}; 
				\node at ($(0,0)+(305:4.5 and 3)$){$u^k_1$}; 
				\node at ($(0,0)+(53:4.5 and 2.9)$){$u^1_{p_1}$};
				\node at ($(0,0)+(295:4.5 and 3)$){$u^k_{p_k}$}; 
				\node [rotate = 60] at ($(0,0)+(-10:3.5 and 2.9)$){$\cdots$};
				\node at ($(0,0)+(40:4.5 and 3)$){$v^1_1$}; 
				\node at ($(0,0)+(25:4.6 and 3.4)$){$v^1_{q_1}$}; 
				\node at ($(0,0)+(338:4.7 and 3.4)$){$v^{k-1}_1$};
				\node at ($(0,0)+(325:4.6 and 3.4)$){$v^{k-1}_{q_{k-1}}$};  
			\end{scope}   
			
			\draw[->,line width=2pt,red] (-4.5, 2.2) -- (-5, 1) node[midway, left, black] {$G^*_1$};
			\draw[->,line width=2pt,red] (4.5, 2.2) -- (5, 1) node[midway, right, black] {$G^*_2$};
			
			\begin{scope}[xshift=-4.5cm, yshift=-1.5cm] 
				\draw ($(0,0)+(0:3.2 and 2)$) arc (0:360:3.2 and 2); 
				
				\node [Bnode] (u11) at ($(0,0)+(100:3.2 and 2)$){}; 
				\node [Bnode] (u12) at ($(0,0)+(110:3.2 and 2)$){};
				\node [Bnode] (u13) at ($(0,0)+(120:3.2 and 2)$){};

				\node [Bnode] (u53) at ($(0,0)+(-30:3.2 and 2)$){}; 
				
				\node [Bnode] (u61) at ($(0,0)+(-110:3.2 and 2)$){};
				\node [Bnode] (u62) at ($(0,0)+(-120:3.2 and 2)$){};
				\node [Bnode] (u63) at ($(0,0)+(-130:3.2 and 2)$){};  
				
				\node [Bnode] (w11) at ($(0,0)+(60:3.2 and 2)$){};
				\node [Bnode] (w12) at ($(0,0)+(70:3.2 and 2)$){};
				\node [Bnode] (w13) at ($(0,0)+(80:3.2 and 2)$){};
				\node [Bnode] (w+) at ($(0,0)+(20:3.2 and 2)$){};

				\node [Bnode] (w21) at ($(0,0)+(170:3.2 and 2)$){};
				\node [Bnode] (w22) at ($(0,0)+(180:3.2 and 2)$){};
				\node [Bnode] (w23) at ($(0,0)+(190:3.2 and 2)$){};
				
				\node [Bnode] (w31) at ($(0,0)+(-60:3.2 and 2)$){};
				\node [Bnode] (w32) at ($(0,0)+(-70:3.2 and 2)$){};
				\node [Bnode] (w33) at ($(0,0)+(-80:3.2 and 2)$){};

				\node [bluenode] (u21) at ($(0,0)+(40:3.2 and 2)$){};  
				
				\node at (-2,-0.9){$\bigcup_{i=3}^{s}T_i$};
				
				\node [snode] (T1) at (1.5,0){};
				\node at (1.2,-0.95){$T^*_{21}$};
				
				\draw (1.2,0) -- (u11);
				\draw (1.1,0) -- (u12);
				\draw (1,0) -- (u13);

				\draw (T1) -- (u61);
				\draw (T1) -- (u62);
				\draw (T1) -- (u63); 
				  
				\coordinate   (Cm) at (1.5,0.3);
				\coordinate   (Dm) at (1.4,-0.6);
				\coordinate   (Em) at (1.1,-0.6);
				\coordinate   (Fm) at (0.9,0.1); 
				
				\draw[draw=none, fill=brown!30] (Cm) to [closed, curve through = {(Dm) (Em) (Fm) }] (Cm);
				
				\node[snode]  (A0) at (1.5,0){};	  
				\node[snode]  (A3) at ($(1.5,0)+(210:0.15)$) {};
				
				\draw (A0)--(A3);

				\node[snode]  (B6) at ($(1.5,0)+(210:0.33)$) {};
				\draw (A3)--(B6);  
				
				\foreach \t in {6,7,8}{
					\node[snode]  (C\t) at ($(1.5,0)+(30*\t+15:0.56)$) {};}
				
				\draw (B6)--(C6); 	
				\draw (B6)--(C7); 
				\draw (A0)--(C8);

				\draw (A0)--(u53);
				\draw[blue, thick] (A0) -- (u21);

				\coordinate   (Ar) at (2.6,0.52);
				\coordinate   (Br) at (2.2,0.25);
				\coordinate   (Cr) at (2,-0.2);
				\coordinate   (Dr) at (2.1,-0.4);
				\coordinate   (Er) at (2.3,-0.3);
				\coordinate   (Fr) at (2.5,-0.15);
				\coordinate   (Gr) at (2.8,-0.1);
				\coordinate   (Hr) at (3,0);

				\node [snode] (T0) at (-1,0){};	
				\draw (T0) -- (w11);
				\draw (T0) -- (w12);
				\draw (T0) -- (w13);
				\draw (T0) -- (w21);
				\draw (T0) -- (w22);
				\draw (T0) -- (w23);
				\draw (T0) -- (w31);
				\draw (T0) -- (w32); 
				\draw (T0) -- (w33);

				\coordinate   (A) at (-2.5,0.5);
				\coordinate   (B) at (-2.7,0.1);
				\coordinate   (C) at (-2.6,-0.4);
				\coordinate   (D) at (-0.8,-0.5);
				\coordinate   (E) at (-0.1,-0.5);
				\coordinate   (F) at (-0.2,0);
				\coordinate   (G) at (-0.3,0.5);
				\coordinate   (H) at (-0.7,0.5); 
				
				\draw[draw=none, fill=magenta!40] (A) to [closed, curve through = {(B) (C) (D) (E) (F) (G) (H)}] (A);

				\node[bnode] (a0) at (-0.5,0){};
				\node[bnode] (a1) at (-0.5,0.17){};
				\node[bnode] (a2) at (-0.4,0.36){};
				\node[bnode] (a3) at (-0.6,0.36){};
				\node[bnode] (a4) at (-0.7,0.1){};
				\node[bnode] (a5) at (-0.6,-0.2){};
				\node[bnode] (a6) at (-0.4,-0.18){};
				
				\node[bnode] (a7) at (-0.3,-0.4){};
				\node[bnode] (a8) at (-0.5,-0.4){};
				\node[bnode] (a9) at (-0.7,-0.4){};
				
				\foreach \t in {1,4,5,6}{
					\draw (a0)--(a\t);} 
				\draw (a1)--(a2);
				\draw (a1)--(a3);
				\draw (a6)--(a7);
				\draw (a5)--(a8);
				\draw (a5)--(a9);

				\node[bnode]  (H0) at (-2,0){};	 
				\node[bnode]  (H2) at ($(-2.2,0)+(135:0.13)$) {};
				\draw (H0)--(H2); 
				\node[bnode]  (H3) at ($(-2.2,0)+(210:0.13)$) {};
				\draw (H0)--(H3);   
				\node[bnode]  (I6) at ($(-2.2,0)+(210:0.27)$) {};  
				\node[bnode]  (I3) at ($(-2.2,0)+(130:0.27)$) {};
				\draw (H2)--(I3); 
				\draw (H3)--(I6);   
				\foreach \t in {3,4,...,8}{
					\node[bnode]  (J\t) at ($(-2.2,0)+(30*\t+15:0.46)$) {};}

				\draw (I3)--(J3); 	
				\draw (I3)--(J4);
				\draw (I3)--(J5);
				
				\draw (I6)--(J6); 	
				\draw (I6)--(J7);
				\draw (H0)--(J8);

				\node[bnode]  (P0) at (-1.8,0){};	  
				\node[bnode]  (P1) at ($(-1.8,0)+(60:0.13)$) {};
				\draw (P0)--(P1);  
				\node[bnode]  (P4) at ($(-1.8,0)+(320:0.13)$) {};
				\draw (P0)--(P4); 
				\foreach \t in {1,9}{
					\node[bnode]  (Q\t) at ($(-1.8,0)+(30*\t+30:0.27)$) {};}  
				\node[bnode]  (Q10) at ($(-1.8,0)+(340:0.27)$) {}; 
				\draw (P1)--(Q1); 	 
				\draw (P4)--(Q9);
				\draw (P4)--(Q10); 
				\foreach \t in {1,2,8,9,10,11,12}{
					\node[bnode]  (R\t) at ($(-1.8,0)+(30*\t+15:0.46)$) {};}
				\draw (Q1)--(R1); 	
				\draw (Q1)--(R2);
				
				\node at (-2,0){$\cdots$};
				\draw (P0)--(R12);
				
				\draw (Q9)--(R9); 	
				\draw (Q9)--(R10);
				\draw (Q10)--(R11);
				\draw (P4)--(R8);

				\node at ($(0,0)+(42:3.5 and 2.2)$){$a$}; 
				\node at ($(0,0)+(20:3.7 and 2.2)$){$w^+$}; 
				\node at ($(0,0)+(60:3.6 and 2.2)$){$w$}; 
				\node at (1.3,0.2){$x$};
				\node at ($(0,0)+(-30:3.5 and 2.2)$){$b$};  
			\end{scope}  
			
			\begin{scope}[xshift=4.5cm, yshift=-1.5cm] 
				\draw ($(0,0)+(0:3.2 and 2)$) arc (0:360:3.2 and 2); 
				
				\node [Bnode] (u31) at ($(0,0)+(60:3.2 and 2)$){};
				\node [Bnode] (u32) at ($(0,0)+(70:3.2 and 2)$){};
				\node [Bnode] (u33) at ($(0,0)+(80:3.2 and 2)$){}; 
				
				\node [Bnode] (u41) at ($(0,0)+(-50:3.2 and 2)$){};
				\node [Bnode] (u42) at ($(0,0)+(-60:3.2 and 2)$){};
				\node [Bnode] (u43) at ($(0,0)+(-70:3.2 and 2)$){}; 
				
				\node [Bnode] (u51) at ($(0,0)+(-150:3.2 and 2)$){};
				\node [Bnode] (u52) at ($(0,0)+(-160:3.2 and 2)$){};
				\node [Bnode] (u53) at ($(0,0)+(-170:3.2 and 2)$){}; 
				
				\node [Bnode] (v11) at ($(0,0)+(100:3.2 and 2)$){};
				\node [Bnode] (v12) at ($(0,0)+(110:3.2 and 2)$){};
				\node [Bnode] (v13) at ($(0,0)+(120:3.2 and 2)$){};

				\node [Bnode] (v21) at ($(0,0)+(0:3.2 and 2)$){};
				\node [Bnode] (v22) at ($(0,0)+(10:3.2 and 2)$){};
				\node [Bnode] (v23) at ($(0,0)+(20:3.2 and 2)$){};
				
				\node [Bnode] (v31) at ($(0,0)+(-90:3.2 and 2)$){};
				\node [Bnode] (v32) at ($(0,0)+(-110:3.2 and 2)$){}; 
				\node [Bnode] (v33) at ($(0,0)+(-100:3.2 and 2)$){}; 
				
				\node [Bnode] (w11) at ($(0,0)+(165:3.2 and 2)$){}; 
				\node [Bnode] (w+) at ($(0,0)+(135:3.2 and 2)$){};
				
				\node [bluenode] (u21) at ($(0,0)+(150:3.2 and 2)$){};

				\node [snode] (T1) at (-0.5,0){};
				\node at (-1,-0.8){$T^*_{22}$};

				\draw (T1) -- (u31);
				\draw (T1) -- (u32);
				\draw (T1) -- (u33);  
				
				\draw (T1) -- (u41);
				\draw (T1) -- (u42);
				\draw (T1) -- (u43); 
				
				\draw (T1) -- (u51);
				\draw (T1) -- (u52);

				\coordinate   (Bm) at (-1,-0.5);
				\coordinate   (Cm) at (-1.1,0.5);
				\coordinate   (Dm) at (-1,0.6);
				\coordinate   (Em) at (-0.6,0.6);
				\coordinate   (Fm) at (-0.3,-0.3);
				\coordinate   (Gm) at (-0.5,-0.6); 
				
				\draw[draw=none, fill=brown!30] (Bm) to [closed, curve through = {(Cm) (Dm) (Em) (Fm) (Gm)}] (Bm);

				\node at (1.8,-0.5){$T_1$};  
				
				\node[snode]  (A0) at (-1,0){};	 
				\node[snode]  (A1) at ($(-1,0)+(45:0.15)$) {};  
				\node[snode]  (A3) at ($(-1,0)+(210:0.15)$) {};
				\draw (A0)--(A1); 
				\draw (A0)--(A3); 
				\node[snode]  (A4) at ($(-1,0)+(-30:0.15)$) {};
				\draw (A0)--(A4); 
				\foreach \t in {1,9}{
					\node[snode]  (B\t) at ($(-1,0)+(30*\t+30:0.33)$) {};} 
				\node[snode]  (B10) at ($(-1,0)+(340:0.33)$) {}; 
				\draw (A1)--(B1);

				\draw (A4)--(B9);
				\draw (A4)--(B10); 
				\foreach \t in {1,2,9,10,11,12}{
					\node[snode]  (C\t) at ($(-1,0)+(30*\t+15:0.56)$) {};}
				\draw (B1)--(C1); 	
				\draw (B1)--(C2); 
				
				\draw (B9)--(C9); 	
				\draw (B9)--(C10);
				\draw (B10)--(C11);
				\draw (A4)--(C12);
			 
				\node [snode] (T2) at (1.5,0){}; 
				\draw (T2) -- (v11);
				\draw (T2) -- (v12);
				\draw (T2) -- (v13);
				
				\draw (T2) -- (v21);
				\draw (T2) -- (v22);
				\draw (T2) -- (v23);
				\draw (T2) -- (v31);
				\draw (T2) -- (v32); 
				\draw (T2) -- (v33); 
				 
				\coordinate   (Ar) at (1.6,0.52);
				\coordinate   (Br) at (1.2,0.25);
				\coordinate   (Cr) at (1,-0.2);
				\coordinate   (Dr) at (1.1,-0.4);
				\coordinate   (Er) at (1.3,-0.3);
				\coordinate   (Fr) at (1.5,-0.15);
				\coordinate   (Gr) at (1.8,-0.1);
				\coordinate   (Hr) at (2,0);
				
				\draw[draw=none, fill=cyan!30] (Ar) to [closed, curve through = {(Br) (Cr) (Dr) (Er) (Fr) (Gr) (Hr)}] (Ar);
				\node[rnode]  (D0) at (1.5,0){};	  
				\node[rnode]  (D1) at ($(1.5,0)+(60:0.13)$) {};
				\draw (D0)--(D1); 
				\node[rnode]  (D2) at ($(1.5,0)+(135:0.13)$) {};
				\draw (D0)--(D2); 
				\node[rnode]  (D3) at ($(1.5,0)+(210:0.13)$) {};
				\draw (D0)--(D3);

				\foreach \t in {1,6}{
					\node[rnode]  (E\t) at ($(1.5,0)+(30*\t+30:0.27)$) {};}
				
				\node[rnode]  (E3) at ($(1.5,0)+(130:0.27)$) {}; 
				
				\draw (D1)--(E1); 	
				\draw (D2)--(E3); 
				\draw (D3)--(E6);  
				
				\foreach \t in {1,2,6,7,12}{
					\node[rnode]  (F\t) at ($(1.5,0)+(30*\t+15:0.46)$) {};}
				\draw (E1)--(F1); 	
				\draw (E1)--(F2);  
				\draw (E6)--(F6); 	
				\draw (E6)--(F7); 
				\draw (D0)--(F12);

				\draw (A0)--(u53);
				\draw[blue, thick] (A0) -- (u21);
				
				\node at ($(0,0)+(150:3.5 and 2.2)$){$a$}; 
				\node at ($(0,0)+(135:3.5 and 2.2)$){$w^+$}; 
				\node at ($(0,0)+(165:3.5 and 2.2)$){$w$}; 
				\node at (-1.1,0.3){$x$};
				\node at (-3.5,-0.3){$b$};  
				\node at (-3.2,-1.2){$v^k_1$};  
				\node at ($(0,0)+(120:3.5 and 2.4)$){$u^1_1$}; 
				\node at ($(0,0)+(100:3.5 and 2.4)$){$u^1_{p_1}$};
				\node at ($(0,0)+(270:3.5 and 2.4)$){$u^k_1$}; 
				\node at ($(0,0)+(250:3.5 and 2.5)$){$u^k_{p_k}$}; 
				\node [rotate = 20] at (2.3,-0.8){$\cdots$};
				\node at ($(0,0)+(80:3.5 and 2.3)$){$v^1_1$}; 
				\node at ($(0,0)+(310:3.6 and 2.4)$){$v^{k-1}_1$};
				\node at ($(0,0)+(60:3.6 and 2.4)$){$v^1_{q_1}$}; 
				\node at ($(0,0)+(290:3.6 and 2.4)$){$v^{k-1}_{q_{k-1}}$}; 
			\end{scope}
		\end{tikzpicture}  
		\caption{Illustration for the construction of $G^*$, $G^*_1$ and $G^*_2$.}
		\label{fig-case2} 
	\end{figure}
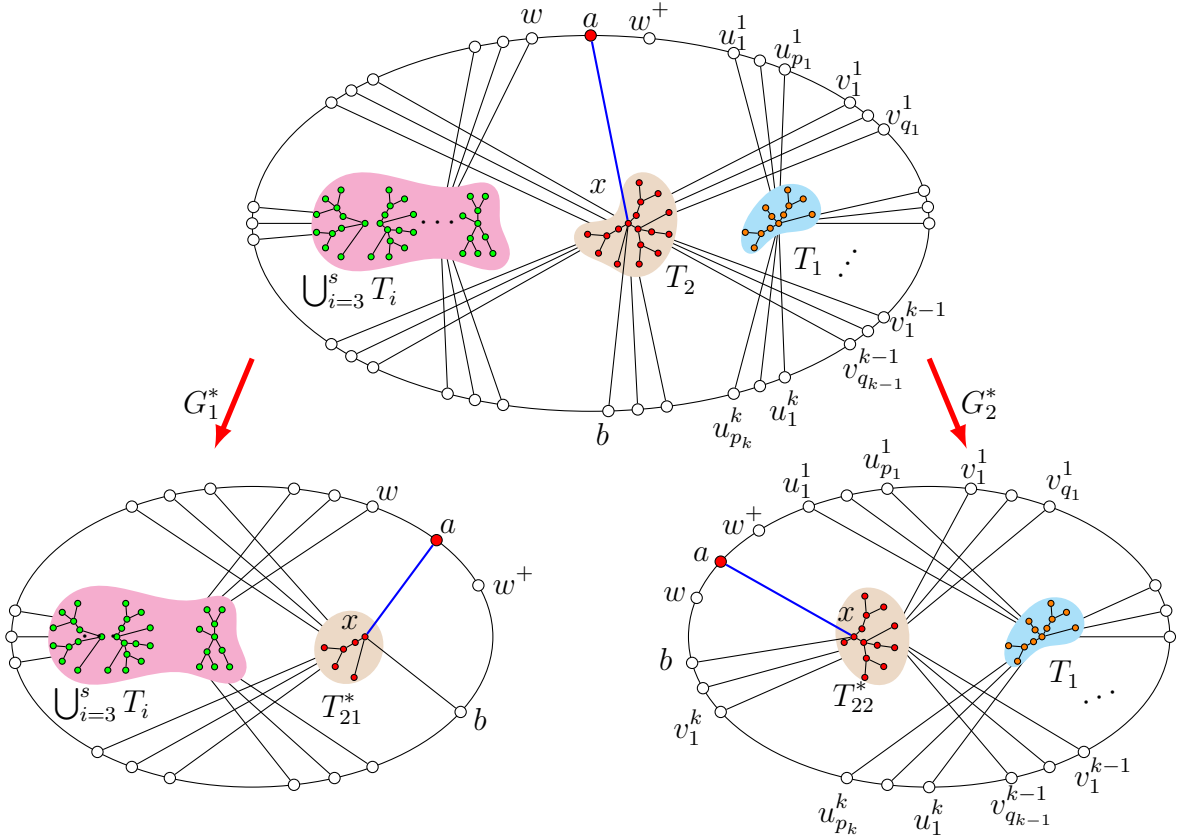
	
	Let $G^*_1$ be the subgraph of $G^*$ induced by the edges in $L',T^*_{21},T_3,\ldots,T_s$ and $G^*_2$ be the subgraph of $G^*$ induced by the edges in $L',T^*_{22},T_1$. See again Fig.~\ref{fig-case2} for illustration. By induction hypothesis, there exists $\C^1 \in \C(G^*_1)$ that is feasible for $(G^*_1, L')$. Instead of applying induction on $(G^*_2, L')$, we construct  $\C^2$ that is feasible for $(G^*_2, L')$, following the construction in the basic step with $s=2$. More precisely, $T^*_{22}$ plays the role of $T_2$, $a$ plays the role of $v^k_{q_k}$ and $b$ plays the role of $v^k_{q_k-1}$, and the remaining attachments of $T^*_{22}$ along $L[u^k_{p_k}, b^-]$ play the role of $v^k_1, v^k_2, \ldots, v^k_{q_k-2}$ in order, respectively.
	
	Note that both $a$ and $b$ are two attachments if $T^*_{2i}$ in $G^*_i$ for $i=1,2$. 
	By \ref{C4} for $(G^*_1, L')$ and $(G^*_2,L')$, for any $ij \in \mathbb{Z}^2_2$ and $t \in \{1,2\}$, there is a unique dicycle in $\C^t$ which is of type $ij$ with respect to $a$, we denote such dicycle as $\vv{C}^{ij}_t(a)$, and denote the other attachment of $T_2$ contained in $\vv{C}^{ij}_t(a)$ as $a^{ij}_t$. We denote $\vv{C}^{ij}_t(b)$ similarly. Also, we denote by $\C^i(a)$  all the dicycles in $\C^i$ containing $a$ for $i=1,2$. The following claim lists six of the dicycles in $\C^1(a) \cup \C^2(a)$ and some properties of the rest two.
	\begin{claimB}\label{claim2-six cycles}
		The following statements hold.
		\begin{enumerate}[label= {(\arabic*)}] 
			\item $\vv{C}^{00}_1 (a) = \vv{C}^{11}_1(b) = (b,x) \cup (x,a) \cup \vv{L}[a,b]$.
			\item $\vv{C}^{01}_1 (a) = \vv{C}^{10}_1(b) = (a,x) \cup (x,b) \cup \cevv{L}[a,b]$.
			\item $\vv{C}^{10}_2 (a) = \vv{C}^{01}_2(b) = (b,x) \cup (x,a) \cup \cevv{L}[b,a]$.
			\item $\vv{C}^{11}_2 (a) = \vv{C}^{00}_2(b) = (a,x) \cup (x,b) \cup \vv{L}[b,a]$.
			\item $\vv{C}^{00}_2 (a) = \vv{T^*_2}[v^{k-1}_{q_{k-1}}, a] \cup \vv{L'}[a,u^1_1] \cup \vv{T_1}[u^1_1, u^k_1] \cup \cevv{L'}[ v^{k-1}_{q_{k-1}}, u^k_1]$. (Hence $a^{00}_2=v^{k-1}_{q_{k-1}}$) 
			\item $\vv{C}^{01}_2 (a) = \vv{T^*_2}[a, v^1_{q_1}] \cup \vv{L'}[v^1_{q_1}, u^2_1] \cup \vv{T_1}[u^2_1, u^1_1] \cup \cevv{L'}[a, u^1_1]$. (Hence $a^{01}_2=v^1_{q_1}$)
			\item Neither $\vv{C}^{11}_1(a)$ nor $\vv{C}^{10}_1(a)$ contain the vertex $b$.
		\end{enumerate} 
	\end{claimB}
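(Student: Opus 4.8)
The plan is to split the statement into three groups and handle each with a different tool. Statements (1)--(2) concern $\mathcal C^1$, which is only provided abstractly by the induction hypothesis, so there I would pin down the dicycle by a tracing argument; statements (3)--(6) concern $\mathcal C^2$, which was built \emph{explicitly} via the $s=2$ step, so there I would just identify which of the families \ref{s21}--\ref{s24} yields a dicycle of the prescribed type at $a$; and (7) I would deduce from (1)--(2) together with a structural observation on $T^*_{21}$.

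For (1)--(2) I would first record two facts about $G^*_1$: (a) the only neighbour of $a$ off $L'$ is $x$, the vertex $a$ is a leaf of $T^*_{21}$, and the unique path in the tree $T^*_{21}$ between its leaves $a$ and $b$ is $a\,x\,b$ (and symmetrically for $b$); (b) the open arc of $L'$ from $a$ to $b$ running through $a^+$ contains no attachment of $G^*_1$ --- the attachments of $T^*_{21}$ all lie on $L'[b,a]$ by definition, $T_3,\dots,T_s$ have no attachment on $L[u,v]$ since they do not overlap $T_1$ (by the choice of $u,v$ in Claim~\ref{claim-uv}), no attachment of $T_3,\dots,T_s$ lies strictly between $u^k_{p_k}$ and $b$ (otherwise the $T_2$-attachment just before the first such one would witness it, contradicting the minimality defining $b$), and none lies strictly between $w$ and $u^1_1$ since $u^1_1$ witnesses $w$. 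Granting these, let $\vv{C}$ be the dicycle of type $00$ with respect to $a$ in $\mathcal C^1$ (unique by \ref{C4}). Since $\vv{C}$ uses the arc $(x,a)$, $a$ is a leaf of $T^*_{21}$, and by \ref{C3} exactly one further attachment $a'$ of $T^*_{21}$ lies on $C$, the trace $C\cap T^*_{21}$ is the $a$--$a'$ path in $T^*_{21}$ (which begins $a\,x\cdots$), while $C\cap L'$ is an arc of $L'$ from $a$ to $a'$ leaving $a$ through $a^+$, i.e.\ into the arc from (b); by (b) it meets no attachment before $b$, so $a'=b$, and by (a) $C\cap T^*_{21}=\{ax,xb\}$. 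Reassembling with orientations gives exactly $\vv{C}=(b,x)\cup(x,a)\cup\vv{L}[a,b]$, which is visibly of type $11$ at $b$; reversing the orientation throughout gives (2).

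For (3)--(6), recall that $\mathcal C^2$ is obtained by running the $s=2$ construction with $T^*_{22},T_1$ in the roles of $T_2,T_1$, with $a$ as $v^k_{q_k}$ and $b$ as $v^k_{q_k-1}$. I would simply check: the two dicycles of family \ref{s22} with $i=k$, $j=q_k-1$ are precisely the dicycles of types $10$ and $11$ at $a$, and substituting the labels (and $\vv{T^*_{22}}[b,a]=(b,x)\cup(x,a)$) turns them into the expressions in (3) and (4); the dicycles of types $00$ and $01$ at $a=v^k_{q_k}$ are the members of family \ref{s24} with $i=k$ and with $i=1$ (using the cyclic convention $k+1\equiv1$), and rewriting them with $a,b$ and the original labels restored yields the displays in (5) and (6), whence also $a^{00}_2=v^{k-1}_{q_{k-1}}$ and $a^{01}_2=v^1_{q_1}$.

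Finally, for (7) the key preliminary is that $T^*_{21}$ has at least three attachments. If $\partial(T^*_{21})=\{a,b\}$, then $T^*_{21}$ is the path $a\,x\,b$, and since all attachments of $T_3,\dots,T_s$ lie on $L'[b,a]$, one can separate $\{a,b\}$ from $\partial(T_j)$ on $L'$ for every $j\ge 3$; hence $T^*_{21}$ would be isolated in $O_{G^*_1}(L')$, which would then not be a tree --- contradicting that the induction hypothesis applies to $(G^*_1,L')$. So there is an attachment $c$ of $T^*_{21}$ with $c\ne a,b$, lying strictly between $b$ and $a$ on $L'[b,a]$. Now if $\vv{C}^{11}_1(a)$ (or $\vv{C}^{10}_1(a)$) contained $b$: being of type $11$ (or $10$) at $a$ it uses $ax$, and containing $b$ it also uses $bx$, so its two arcs at $x$ are $ax,xb$, forcing $C\cap T^*_{21}=\{ax,xb\}$ and, by \ref{C3}, that $a,b$ are its only $T^*_{21}$-attachments; but the direction of its arcs at $a$ forces its $L'$-trace to leave $a$ through $a^-=w$, run down $L'[b,a]$ past $c$, and close up at $b$, so it must contain the third $T^*_{21}$-attachment $c$ --- a contradiction. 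The hard part will be making this last step airtight: one must exclude that the trace ``jumps over'' $c$ via some bridge $T_j$ ($j\ge3$) whose attachments straddle $c$ on $L'[b,a]$. I would do this by noting that such a $T_j$ would have to overlap $T_2$ (as $c$ is a $T_2$-attachment lying between two $T_j$-attachments), and then invoking \ref{C2}, that $L'\triangle C$ is a single cycle, to see that no such detour can be completed to a dicycle of $\mathcal C^1$ through $b$.
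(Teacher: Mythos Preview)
Your treatment of (1)--(6) is correct and matches the paper's: (1)--(2) via the tracing argument using \ref{C3}, \ref{C4} and the fact that the open arc $L'(a,b)$ carries no attachments of $G^*_1$ (this is exactly the method of Claim~\ref{claim-four cycles}, to which the paper simply refers); (3)--(6) by reading off the explicit $s=2$ construction with the correct substitutions.

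For (7), however, you end with a genuine gap that you yourself flag. Your attempted route---trace the $L'$-portion of $C$ from $a$ through $a^-$ down to $b$ and catch the third attachment $c$ along the way---does not work as stated, because $C$ can indeed detour through some $T_j$ ($j\ge 3$) and skip over $c$. Your proposed patch (``$T_j$ would overlap $T_2$\dots then invoke \ref{C2}'') is too vague to constitute a proof.

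The paper avoids this difficulty by using \ref{C2} \emph{globally} rather than tracing locally. You already have all the ingredients: from the type at $a$ you know $aa^+\notin E(C)$ and $ax\in E(C)$; from \ref{C3} and $b\in V(C)$ you get $C\cap T^*_{21}=\{ax,xb\}$; and your fact (b) says the interior of $L'[a,b]$ has no attachments, so (since $aa^+\notin E(C)$ forces $bb^-\notin E(C)$ as well) $E(C)\cap E(L'[a,b])=\emptyset$. Hence the cycle $L'[a,b]\cup\{ax,xb\}$ lies inside $L'\triangle C$. But \ref{C2} says $L'\triangle C$ is itself a single cycle, so the containment is an equality, and therefore $C=L'[b,a]\cup\{ax,xb\}$. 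Now $C$ contains the entire arc $L'[b,a]$, hence every attachment of $T^*_{21}$; since there are at least three (your argument for this via connectedness of $O_{G^*_1}(L')$ is fine, as is the paper's via $T_2$ overlapping some $T_i$, $i\ge 3$), this contradicts \ref{C3}. No tracing, no worry about jumps.
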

	\begin{proof}
		The first two can be proven as in the proof of Claim~\ref{claim-four cycles}. The third through sixth statements follow from the construction of $\C^2$. We focus on the last one, and only prove that $\vv{C}^{11}_1(a)$ does not contain $b$, the other part can be shown similarly. Suppose to the contrary, $b$ is contained in $\vv{C}^{11}_1(a)$. Since $\vv{C}^{11}_1(a)$ is of type $11$ with respect to $a$, we know that $L'[a,b]$ is not contained in $\vv{C}^{11}_1(a)$, hence the cycle $L'[a,b] \cup T^*_{21}[a,b]$ is contained in  $L' \triangle C^{11}_1(a)$. As $\C^1$ holds for \ref{C2}, it must be that $L' \triangle C^{11}_1(a) = L'[a,b] \cup T^*_{21}[a,b]$. This implies that $\vv{C}^{11}_1(a) = L'[b,a] \cup T^*_{21}[a,b]$. Hence $\vv{C}^{11}_1(a)$ contains all the attachments of $T^*_{21}$. Since $T_2$ overlaps with some $T_i$ for $i \in \{3,4,\ldots, s\}$, $T^*_{21}$ must contain another attachment other than $a$ and $b$. So $\vv{C}^{11}_1(a)$ contains at least three attachments of $T^*_{21}$, which violates \ref{C3}.  
	\end{proof}

	Now we are going to construct $\C$ that is feasible for $(G,L)$. To do this, we introduce several auxiliary subgraphs and notations that will be used in the construction.

	Given two dicycles $\vv{C}_1$ and $\vv{C}_2$ such that their intersection $C_1 \cap C_2$ is a path containing at least one edge, and such that the orientations of $\vv{C}_1$ and $\vv{C}_2$ on this common path are opposite, we define $\vv{C}_1 \triangle \vv{C}_2$ to be the digraph whose underlying graph is $C_1 \triangle C_2$, with arc directions inherited from $\vv{C}_1 \cup \vv{C}_2$. It is clear that $\vv{C}_1 \triangle \vv{C}_2$ is also a dicycle. 
	
	Let $Q_1 = T_2[x,a^{11}_1] \cap T_2[x, a^{00}_2]$ and $Q_2 = T_2[x, a^{10}_1] \cap T_2[x, a^{01}_2]$. Note that both $Q_1$ and $Q_2$ are paths in $T_2$, and may consist of only the single vertex $x$. Since $C^{11}_1(a)$ and $C^{00}_2(a)$ share common edges and vertices only within $T_2$, it follows that $$C^{11}_1(a) \cap C^{00}_2(a) = xa \cup Q_1.$$ Similarly, we have that $$C^{10}_1(a) \cap C^{01}_2(a) = xa \cup Q_2.$$ On the other hand, by the definitions of $\vv{C}^{11}_1(a)$ and $\vv{C}^{00}_2(a)$, the orientations of these two dicycles are opposite on $xa \cup Q_1$. Thus $\vv{C}^{11}_1(a) \triangle \vv{C}^{00}_2(a)$ is also a dicycle. Let $\vv{C}^{\text{new}}_1$ be the dicycle obtained from $\vv{C}^{11}_1(a) \triangle \vv{C}^{00}_2(a)$ by contracting $w \rightarrow a \rightarrow w^+$ to $w \rightarrow w^+$. Similarly, let $\vv{C}^{\text{new}}_2$ be the dicycle obtained from $\vv{C}^{10}_1(a) \triangle \vv{C}^{01}_2(a)$ by contracting $w^+ \rightarrow a \rightarrow w$ to $w^+ \rightarrow w$. See Fig.~\ref{fig:new dicycle} for illustration.
	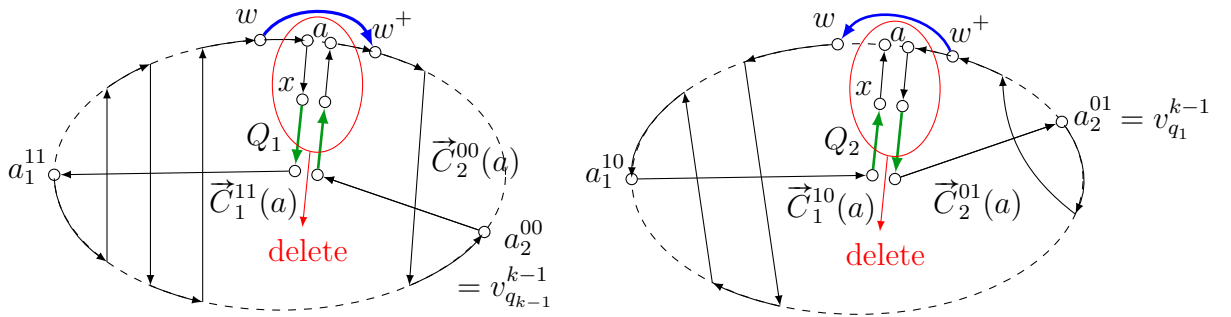
\begin{figure}[h]
		\centering
		\begin{minipage}{0.45\textwidth} 
			\begin{tikzpicture}[>=latex,
				Tnode/.style={circle, draw= none, fill = yellow, opacity=0.3, minimum size=7mm, inner sep=0pt},
				snode/.style={circle, draw=black,fill=white, minimum size=1.5mm, inner sep=0pt}] 
				
				\draw[dashed] (0,0) ellipse (3cm and 1.8cm); 
				\node[snode, label={[xshift=-6pt, yshift= -3pt] $x$}] (x) at (0.3,1) {};
				\node[snode] (x') at (0.6,0.97) {};
				
				\node[snode, label={[xshift=-5pt] $w$}] (w) at (95:3cm and 1.8cm) {};
				\node[snode, label={[xshift=5pt, yshift=-5pt] $a$}] (a) at (83:3cm and 1.8cm) {};
				\node[snode] (a') at (77:3cm and 1.8cm) {};
				\node[snode] (w+) at (65:3cm and 1.8cm) {}; 
				
				\node at (1.5,2) {$w^+$};
				
				\node[snode] (y) at (0.2,0.05) {};
				\node[snode] (y') at (0.5,0) {};
				
				\node[snode, label={[xshift=-10pt,yshift=-10pt] $a^{11}_1$}] (a111) at (180:3cm and 1.8cm) {};
				
				\draw [->] (a)-- (x);
				\draw [darkgreen,line width=1pt,->] (x)--(y); 
				\draw [->] (y)--(a111); 
				\draw [->] (181.5: 3cm and 1.8cm) arc (181.5: 220: 3cm and 1.8cm); 
				\draw [->] (220:3cm and 1.8cm)--(140:3cm and 1.8cm); 
				\draw [->] (140: 3cm and 1.8cm) arc (140: 125: 3cm and 1.8cm);
				\draw [->] (125:3cm and 1.8cm)--(235:3cm and 1.8cm); 
				\draw [->] (235: 3cm and 1.8cm) arc (235: 250: 3cm and 1.8cm);
				\draw [->] (250:3cm and 1.8cm)--(110:3cm and 1.8cm); 
				\draw [->] (110: 3cm and 1.8cm) arc (110: 96: 3cm and 1.8cm);
				\draw [->] (94: 3cm and 1.8cm) arc (94: 83.5: 3cm and 1.8cm);
				
				\node[snode, label={[xshift=15pt,yshift=-15pt] $a^{00}_2$}] (a002) at (-25:3cm and 1.8cm) {};
				\node at (3,-1.5) {$=v^{k-1}_{q_{k-1}}$}; 
				
				\draw [darkgreen,line width=1pt,->] (y')-- (x');
				\draw [->] (x')--(a'); 
				\draw [->] (a002)--(y'); 
				\draw [->] (75: 3cm and 1.8cm) arc (75: 65.5: 3cm and 1.8cm); 
				\draw [->] (63.5: 3cm and 1.8cm) arc (63.5: 50: 3cm and 1.8cm); 
				\draw [->] (50:3cm and 1.8cm)--(-55:3cm and 1.8cm); 
				\draw [->] (-55: 3cm and 1.8cm) arc (-55: -26: 3cm and 1.8cm);
				\draw [->] (a002)--(y');  
				
				\draw [blue, line width=1.2pt,->] (w) [bend left = 60] to (w+);
				
				\draw[red] (0.5,1.2) ellipse (0.6cm and 0.9cm);
				\draw[red,->] (0.4,0.3) to (0.3,-0.7);  
				\node[red, right] at (-0.3,-1) {delete}; 
				
				\node at (-0.2,0.5) {$Q_1$}; 
				\node at (-0.35,-0.35) {$\vv{C}^{11}_1(a)$};  
				\node at (2.6,0.2) {$\vv{C}^{00}_2(a)$}; 
			\end{tikzpicture} 
		\end{minipage}
		\begin{minipage}{0.45\textwidth} 
			\begin{tikzpicture}[>=latex,
				Tnode/.style={circle, draw= none, fill = yellow, opacity=0.3, minimum size=7mm, inner sep=0pt},
				snode/.style={circle, draw=black,fill=white, minimum size=1.5mm, inner sep=0pt}] 
				
				\draw[dashed] (0,0) ellipse (3cm and 1.8cm); 
				\node[snode, label={[xshift=-6pt, yshift= -3pt] $x$}] (x) at (0.3,1) {};
				\node[snode] (x') at (0.6,0.97) {};
				
				\node[snode, label={[xshift=-5pt] $w$}] (w) at (95:3cm and 1.8cm) {};
				\node[snode, label={[xshift=5pt, yshift=-5pt] $a$}] (a) at (83:3cm and 1.8cm) {};
				\node[snode] (a') at (77:3cm and 1.8cm) {};
				\node[snode] (w+) at (65:3cm and 1.8cm) {}; 
				
				\node at (1.5,2) {$w^+$};
				
				\node[snode] (y) at (0.2,0.05) {};
				\node[snode] (y') at (0.5,0) {};
				
				\node[snode, label={[xshift=-10pt,yshift=-10pt] $a^{10}_1$}] (a111) at (180:3cm and 1.8cm) {};
				
				\draw [<-] (a)-- (x);
				\draw [darkgreen,line width=1pt,<-] (x)--(y); 
				\draw [<-] (y)--(a111); 
				\draw [<-] (179.5: 3cm and 1.8cm) arc (179.5: 140: 3cm and 1.8cm);
				\draw [<-] (140:3cm and 1.8cm)--(230:3cm and 1.8cm);   
				\draw [<-] (230: 3cm and 1.8cm) arc (230: 250: 3cm and 1.8cm);
				\draw [<-] (250:3cm and 1.8cm)--(120:3cm and 1.8cm);  
				\draw [<-] (120: 3cm and 1.8cm) arc (120: 96.5: 3cm and 1.8cm);
				
				\node[snode, label={[xshift=12pt,yshift=-10pt] $a^{01}_2$}] (a012) at (25:3cm and 1.8cm) {};
				\node at (4.1,0.8) {$=v^{k-1}_{q_1}$}; 
				
				\draw [darkgreen,line width=1pt,<-] (y')-- (x');
				\draw [<-] (x')--(a'); 
				\draw [<-] (a012)--(y'); 
				\draw [<-] (75: 3cm and 1.8cm) arc (75: 65.5: 3cm and 1.8cm); 
				\draw [<-] (63.5: 3cm and 1.8cm) arc (63.5: 50: 3cm and 1.8cm); 
				\draw [<-] (50:3cm and 1.8cm) [bend right = 25] to (-15:3cm and 1.8cm); 
				\draw [<-] (-15: 3cm and 1.8cm) arc (-15: 23.5: 3cm and 1.8cm);
				\draw [<-] (a012)--(y');  
				
				\draw [blue, line width=1pt,<-] (w) [bend left = 60] to (w+);
				
				\draw[red] (0.5,1.2) ellipse (0.6cm and 0.9cm);
				\draw[red,->] (0.4,0.3) to (0.3,-0.7);  
				\node[red, right] at (-0.3,-1) {delete}; 
				
				\node at (-0.2,0.5) {$Q_2$}; 
				\node at (-0.35,-0.35) {$\vv{C}^{10}_1(a)$};  
				\node at (1.6,-0.3) {$\vv{C}^{01}_2(a)$}; 
			\end{tikzpicture} 
		\end{minipage}
		\caption{Illustration for the construction of  $\protect\vv{C}^{\protect\text{new}}_1$ (left) and $\protect\vv{C}^{\protect\text{new}}_2$ (right). The underlying graph of the green diptahs in the left figure are both $Q_1$, while those that in the right figure are both $Q_2$.}
		\label{fig:new dicycle} 
	\end{figure}

	Now we construct $\C$ as follows, let $$\mathcal{C} = \left(\mathcal{C}^1\cup \mathcal{C}^2\setminus [\mathcal{C}^1(a)\cup\mathcal{C}^2(a)]\right) \cup \{\vv{C}^{\text{new}}_1, \vv{C}^{\text{new}}_2\}.$$
	
	In the rest of this subsection, we check that $\C$ satisfies \ref{C1}-\ref{C4}. 
	
	We first show that $\C$ satisfies \ref{C1}. It is not difficult to verify that $|\mathcal{C}_e| = 2$ for each $e \in E(L)$. We then focus on edges in $E(G) \setminus E(L)$. 
	
	As each edge in $Q_1$ is also in $T^*_{21}[b,a^{11}_1] \cap T^*_{22}[b, a^{00}_2]$, and each edge in $Q_2$ is also in $T^*_{21}[b,a^{10}_1] \cap T^*_{22}[b, a^{01}_2]$, the following holds. 
	\begin{claimB}
		\label{claim-Q1Q2}
		For each edge $e \in E(Q_1) \cup E(Q_2) \cup \{xb\}$, we have $e \in E(T^*_{21} \cap T^*_{22})$.  
	\end{claimB}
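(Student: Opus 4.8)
The plan is to deduce everything from one elementary fact about trees: a subtree $S$ of a tree $T$ is \emph{convex}, i.e.\ it contains the unique $T$-path between any two of its vertices. First I would record the local picture inside $T^*_2 = T_2 \cup xa$. Since $x$ is the unique neighbour of the leaf $b$ in $T_2$ and $a$ is a new leaf joined only to $x$, the $T^*_2$-path between the two leaves $a$ and $b$ is precisely $a$–$x$–$b$. Both $T^*_{21}$ and $T^*_{22}$ have $a$ and $b$ among their leaves (they are the two endpoints of $L'[b,a]$ and of $L'[a,b]$, respectively), so by convexity each of $T^*_{21}$ and $T^*_{22}$ contains this $a$–$b$ path; in particular $x \in V(T^*_{21}) \cap V(T^*_{22})$ and $xb \in E(T^*_{21}) \cap E(T^*_{22})$, which already settles the edge $xb$.

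Next I would pin down that each of the four ``other attachments'' is a leaf of the appropriate subtree and lies in $V(T_2)$. The dicycles $\vv{C}^{11}_1(a)$ and $\vv{C}^{10}_1(a)$ belong to $\mathcal{C}^1$, which is feasible for $(G^*_1,L')$, and $T^*_{21}$ is an $L'$-bridge of $G^*_1$ with $a$ among its attachments; since these dicycles contain $a$, property~\ref{C3} forces each to contain exactly one further attachment of $T^*_{21}$, which is a leaf of $T^*_{21}$ distinct from $a$, hence a leaf of $T_2$ — and this is, by definition, $a^{11}_1$ (respectively $a^{10}_1$). Thus $a^{11}_1,a^{10}_1 \in \partial(T^*_{21}) \cap V(T_2)$. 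Symmetrically, using that $\mathcal{C}^2$ is feasible for $(G^*_2,L')$, that $T^*_{22}$ is an $L'$-bridge of $G^*_2$ with $a$ among its attachments, and (for explicitness) items~(5)--(6) of Claim~\ref{claim2-six cycles}, we obtain $a^{00}_2,a^{01}_2 \in \partial(T^*_{22}) \cap V(T_2)$.

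Finally I would combine convexity with these facts. For any leaf $z$ of $T^*_{21}$ with $z \in V(T_2)$, the path $T^*_{21}[b,z]$ equals $T^*_2[b,z]$ (by convexity of $T^*_{21}$), and the latter is the edge $xb$ followed by $T_2[x,z]$ (since $a$ is a pendant leaf at $x$, no path between two vertices of $T_2$ uses $a$, and $x$ is the unique neighbour of $b$); hence $E(T_2[x,z]) \subseteq E(T^*_{21})$. In the same way $E(T_2[x,z']) \subseteq E(T^*_{22})$ whenever $z' \in \partial(T^*_{22}) \cap V(T_2)$. Applying the first inclusion with $z = a^{11}_1$ and the second with $z' = a^{00}_2$ yields $E(Q_1) = E(T_2[x,a^{11}_1]) \cap E(T_2[x,a^{00}_2]) \subseteq E(T^*_{21}) \cap E(T^*_{22})$, and applying them with $z = a^{10}_1$ and $z' = a^{01}_2$ gives the analogous inclusion for $Q_2$; together with $xb$ this is exactly the claim. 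I do not expect a real obstacle: the only delicate point is verifying that the attachment recorded as $a^{ij}_t$ is genuinely a leaf of $T^*_{2t}$ lying in $V(T_2)$, and this is precisely where \ref{C3} for the feasible families $\mathcal{C}^1,\mathcal{C}^2$ (together with Claim~\ref{claim2-six cycles}) is used. Degenerate cases, such as $Q_1$ or $Q_2$ collapsing to the single vertex $x$, make the relevant part of the statement vacuous and require no separate treatment.
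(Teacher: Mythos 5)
Your proposal is correct and follows essentially the same route as the paper, which justifies the claim in a single sentence by noting that every edge of $Q_1$ lies on $T^*_{21}[b,a^{11}_1]\cap T^*_{22}[b,a^{00}_2]$ and every edge of $Q_2$ on $T^*_{21}[b,a^{10}_1]\cap T^*_{22}[b,a^{01}_2]$; your write-up simply makes explicit the convexity of subtrees and the verification that each $a^{ij}_t$ is a leaf of the relevant $T^*_{2t}$ lying in $V(T_2)$.
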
 
	
	Assume $e \in E(G) \setminus E(L)$, clearly $e \neq xa$. Then we consider the following cases. 
	\begin{itemize}
		\item $e = xb$. Note that by the first four items of Claim~\ref{claim2-six cycles}, $e$ is contained in each of the cycles $C^{00}_1(a)$, $C^{01}_1(a)$, $C^{10}_2(a)$, $C^{11}_2(a)$. By the last three items of Claim~\ref{claim2-six cycles}, $e$ is not contained in each of $C^{10}_1(a)$,  $C^{11}_1(a)$, $C^{00}_2(a)$, $C^{01}_2(a)$. On the other hand, by Claim~\ref{claim-Q1Q2}, $e \in E(G^*_1) \cap E(G^*_2)$. Therefore,  $|\C_e| = |\C^1_e| + |\C^2_e|-4 \geq 4$.
		\item $e \in E(Q_1) \cup E(Q_2)$. By Claim~\ref{claim-Q1Q2}, $e \in E(G^*_1) \cap E(G^*_2)$. On the other hand, $e$ is not contained in any of the four dicycles $\vv{C}^{00}_1(a)$, $\vv{C}^{01}_1(a)$, $\vv{C}^{10}_2(a)$, $\vv{C}^{11}_2(a)$, but it is possible that $e$ is in $C^{11}_1(a) \cap C^{00}_2(a)$ or $C^{10}_1(a) \cap C^{01}_2(a)$ as $e \in E(Q_1) \cup E(Q_2)$. Hence $|\C_e| \geq  |\C^1_e| + |\C^2_e|-4 \geq 4$.
		\item $e \in E(G) \setminus [E(L) \cup E(Q_1) \cup E(Q_2) \cup \{bx\}]$. Note that in this case, it is not possible that $e \in E(C^{11}_1(a)) \cup E(C^{00}_2(a))$ but $e \notin E(C^{\text{new}}_1)$ as $e \notin E(Q_1)$. Similarly, it is not possible that $e \in E(C^{10}_1(a)) \cup E(C^{01}_2(a))$ but $e \notin E(C^{\text{new}}_2)$ as $e \notin E(Q_2)$. Then by Claim~\ref{claim2-union},  $e$ is contained in  $E(G^*_i) \setminus (E(L') \cup \{xa\})$ for some $i \in \{1,2\}$. On the other hand, $e$ is not contained in any of the four dicycles $\vv{C}^{00}_1(a)$, $\vv{C}^{01}_1(a)$, $\vv{C}^{10}_2(a)$, $\vv{C}^{11}_2(a)$. Thus, as $\C^i$ is feasible for $(G^*_i, L')$, we have $|\C_e| \geq |\C^i_e| \geq 4$. 
	\end{itemize}   
	Therefore, we proved that $\mathcal{C}$ satisfies \ref{C1}.
	
	By our construction of $\mathcal{C}$, to check that $\mathcal{C}$ satisfies \ref{C2}, it suffices to check that both $C^{\text{new}}_1 \triangle L$ and $C^{\text{new}}_2 \triangle L$ are cycles. 
	Observe that  $C^{11}_1(a) \cap C^{00}_2(a) = Q_1 \cup xa$ and $C^{10}_1(a) \cap C^{01}_2(a) = Q_2 \cup xa$. Since $C^{ij}_t(a) \triangle L'$ is a cycle for $ij \in \mathbb{Z}^2_2$, $t \in \{1,2\}$, we decompose four of them to edge-disjoint paths as follows.  
	\begin{align*} 
		C^{11}_1(a) \triangle L' &  = xa \cup Q_1 \cup L'[a,b] \cup R_1, \\
		C^{00}_2(a) \triangle L' &  = xa \cup Q_1 \cup L'[b,a] \cup R_2, \\ 
		C^{10}_1(a) \triangle L' &  = xa \cup Q_2 \cup L'[a,b] \cup R_3, \\
		C^{01}_2(a) \triangle L' &  = xa \cup Q_2 \cup L'[b,a] \cup R_4,
	\end{align*}   
	where $R_1$ and $R_3$ are in $G^*_1$, $R_2$ and $R_4$ are in $G^*_2$. In addition, $R_1$ and $R_3$ share only the vertices $b$ and one endpoint of $Q_1$ distinct from $x$, and similarly, $R_2$ and $R_4$ share only  the vertices $b$ and one endpoint of $Q_2$ distinct from $x$. It follows that $C^{\text{new}}_1 \triangle L = R_1 \cup R_2$ and $C^{\text{new}}_2 \triangle L = R_3 \cup R_4$ are both cycles. Thus $\mathcal{C}$ satisfies \ref{C2}.
	
	For \ref{C3}, we only need to focus on $C^{\text{new}}_1$ and $C^{\text{new}}_2$, the rest can be verified similarly as in Section~\ref{sec-case 1}. It is clear that $C^{\text{new}}_1$ contains two attachments of $T_2$, which are $a^{11}_1$ and $a^{00}_2$, and contains either no or two attachments of $T_j$ for $j\neq 2$ as \ref{C3} holds for $C^{11}_1(a)$ and $C^{00}_2(a)$. Similarly, we can verify that $C^{\text{new}}_2$ satisfies \ref{C3}.
	
	Now we show that $\mathcal{C}$ satisfies \ref{C4}. This statement holds for $b$ by the similar arguments as in Section~\ref{sec-case 1}. Thus we assume $w$ is an attachment distinct from $b$. If $w$ is not contained in any dicycle in $\mathcal{C}^1(a) \cup \mathcal{C}^2(a)$, then each dicycle containing $w$ is kept in $\mathcal{C}$. So we assume that $w$ is contained in at least one dicycle in $\mathcal{C}^1(a) \cup \mathcal{C}^2(a)$. By Claim~\ref{claim2-six cycles}, $w$ is not contained in any of $\vv{C}^{00}_1(a)$, $\vv{C}^{01}_1(a)$, $\vv{C}^{11}_2(a)$ and $\vv{C}^{10}_2(a)$, each of which contains two attachments $a$ and $b$ of $T^*_2$. If $w$ is contained in $\vv{C}^{11}_1(a)$ (resp., $\vv{C}^{00}_2(a)$) which is the unique dicycle of  type $ij$ with respect to $w$ for some $ij \in \mathbb{Z}^2_2$, then $\vv{C}^{\text{new}}_1$ is also the unique dicycle of type $ij$ with respect to $w$. If $w$ is contained in $\vv{C}^{10}_1(a)$ (resp., $\vv{C}^{00}_2(a)$) which is the unique dicycle of  type $ij$ with respect to $w$ for some $ij \in \mathbb{Z}^2_2$, then  $\vv{C}^{\text{new}}_2$ is also the unique dicycle of type $ij$ with respect to $w$.

	\subsection{Both $u^1_1$ and $u^k_{p_k}$ witness attachments of $T_i$ for $3 \le i \le s$}\label{case3}
	
	In this case, let $w$ be the first attachment that $u^1_1$ witnesses when moving in the counterclockwise direction, and let $z$ be the first attachment that $u^k_{p_k}$ witnesses when moving in the clockwise direction.  
	Here, $w$ and $z$ are attachments of $T_i$ and $T_j$, respectively, for some $3 \le i,j \le s$ (possibly with $i=j$).

	We choose a non-leaf vertex $x$ in $T_2$ as follows. If $v^1_1 = v^1_{q_1} = v^{k-1}_1 = v^{k-1}_{q_{k-1}}$ (when $k=2$ and $q_1=1$), then let $x$ be the unique neighbor of $v^1_{q_1}$ in $T_2$. Otherwise, let $x$ be a non-leaf vertex in the subtree of $T_2$ whose leaves are exactly $\{v^1_1, v^1_{q_1}, v^{k-1}_1, v^{k-1}_{q_{k-1}}\}$.  
	We choose $x$ so that its degree in this subtree is as large as possible. In addition, if $v^1_{q_1} \neq v^{k-1}_{q_{k-1}}$, then make sure that $x$ is in $T_2[v^1_{q_1}, v^{k-1}_{q_{k-1}}]$. See Fig.~\ref{fig:case3} for illustration.
	
	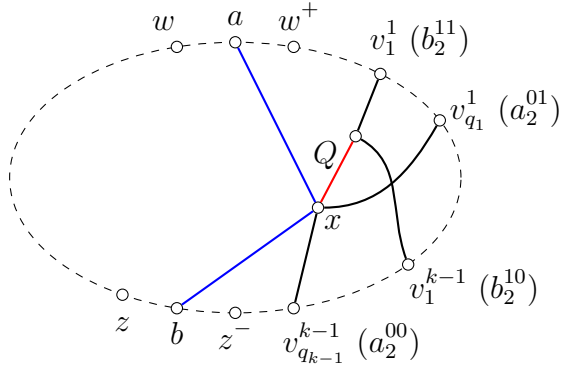
\begin{figure}  
		\centering
		\begin{tikzpicture}[>=latex,
			Tnode/.style={circle, draw= none, fill = yellow, opacity=0.3, minimum size=7mm, inner sep=0pt},
			snode/.style={circle, draw=black,fill=white, minimum size=1.5mm, inner sep=0pt}] 
			
			\draw[dashed] (0,0) ellipse (3cm and 1.8cm);
			
			\node[snode, label={[xshift=-5pt] $w$}] (w) at (105:3cm and 1.8cm) {};
			
			\node[snode, label={[xshift=0pt] $a$}] (a) at (90:3cm and 1.8cm) {};
			\node[snode, label={[xshift=2pt] $w^+$}] (w+) at (75:3cm and 1.8cm) {}; 
			
			\node[snode, label={[xshift=15pt] $v^1_1$ ($b^{11}_2$)}] (v11) at (50:3cm and 1.8cm) {};
			\node[snode, label={[xshift=25pt,yshift=-10pt] $v^1_{q_1}$ ($a^{01}_2$)}] (v1q) at (25:3cm and 1.8cm) {}; 
			
			\node[snode, label={[xshift=25pt, yshift=-22pt] $v^{k-1}_1$ ($b^{10}_2$)}] (vk1) at (-40:3cm and 1.8cm) {};
			\node[snode, label={[xshift=22pt, yshift=-28pt] $v^{k-1}_{q_{k-1}}$ ($a^{00}_2$)}] (vkq) at (-75:3cm and 1.8cm) {};
			
			\node[snode, label={[yshift=-20 pt] $z$}] (z) at (-120:3cm and 1.8cm) {};
			\node[snode, label={[yshift=-20 pt] $b$}] (b) at (-105:3cm and 1.8cm) {}; 
			\node[snode, label={[yshift=-20 pt] $z^-$}] (z-) at (-90:3cm and 1.8cm) {};

			\node [snode] (y) at (1.6,0.55) {};
			\node [snode] (x) at (1.1,-0.4) {};
			\node at (1.3,-0.6) {$x$};
			
			\draw[thick] (v11) to (y);
			\draw[red, thick] (y) to (x);
			\draw[thick] (x) to (vkq); 
			\node at (1.2,0.3) {$Q$};
			
			\draw[thick] (v1q) to[out=240, in = 0] (x);
			\draw[thick] (y) to[out=330, in = 110] (vk1);
			
			\draw [blue, thick] (a)--(x)--(b);
		\end{tikzpicture} 
		\caption{Illustration for the choice of $x$ and $xa, xb$. The red path is $Q$.}
		\label{fig:case3} 
	\end{figure} 
	
	We construct a new graph $G^*$ which obtained from $G$ by subdividing $ww^+$ to $waw^+$ and adding a new edge between $x$ and the new vertex $a$, and subdividing $zz^-$ to $zbz^-$ and adding an edge between $x$ and the new vertex $b$.  Let $L'$ be the corresponding subdivision of $L$. Let $T^*_2, T^*_{21}, T^*_{22}, G^*_1, G^*_2$, $\C^1, \C^2, \ \vv{C}^{ij}_t(a), a^{ij}_t$ be defined or obtained as last section. Similarly, we define $\vv{C}^{ij}_t(b)$ and $b^{ij}_t$ for $b$ as $\vv{C}^{ij}_t(a)$ and $a^{ij}_t$, respectively. Obviously, Claim~\ref{claim2-union} also holds here.

	Let $Q_1 = T_2[x,a^{11}_1] \cap T_2[x, a^{00}_2]$, $Q_2 = T_2[x, a^{10}_1] \cap T_2[x, a^{01}_2]$, $Q_3 = T_2[x, b^{00}_1] \cap T_2[x, b^{11}_2]$ and $T_2[x, b^{10}_1] \cap T_2[x, b^{01}_2]$. The following is obviously true. 
	\begin{claimC}
		\label{claim-Q1-Q4}
		Every edge in $E(Q_1) \cup E(Q_2) \cup E(Q_3) \cup E(Q_4) \cup \{xa, xb\}$ is also in $E(T^*_{21} \cap T^*_{22})$.  
	\end{claimC}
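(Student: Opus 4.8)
The plan is to prove three kinds of containments separately — the pendant edges $xa,xb$, and then each of the four paths $Q_r$ — all of them resting on one structural observation about $T^*_2$: the only edges of $T^*_2$ that are not edges of $T_2$ are the two pendant edges $xa$ and $xb$, so $a$ and $b$ each have degree one in $T^*_2$ and cannot be interior vertices of any path in $T^*_2$. For the pendant edges themselves, note that by construction $a$ and $b$ are the two endpoints of each of the arcs $L'[b,a]$ and $L'[a,b]$, hence leaves of both $T^*_{21}$ and $T^*_{22}$; since $x$ is the unique $T^*_2$-neighbour of each of $a,b$, the edges $xa,xb$ are pendant edges of both subtrees, giving $\{xa,xb\}\subseteq E(T^*_{21})\cap E(T^*_{22})$, and moreover $x\in V(T^*_{21})\cap V(T^*_{22})$ because $x$ lies on the $T^*_2$-path from $a$ to $b$ that each subtree must contain.

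Next I would treat $Q_1$; the other $Q_r$ are handled verbatim after the obvious substitution of dicycles. Recall that $a^{11}_1$ is the second attachment of $T^*_{21}$ lying on $\vv{C}^{11}_1(a)\in\mathcal{C}^1$: the dicycle $\vv{C}^{11}_1(a)$ contains $a$ (being of type $11$ with respect to $a$), so by \ref{C3} for $(G^*_1,L')$ it contains exactly two attachments of $T^*_{21}$, namely $a$ and $a^{11}_1$, and by the analogue of Claim~\ref{claim2-six cycles} — which is also what makes $Q_1$ well defined — $a^{11}_1$ is a genuine vertex of $T_2$. Since $T^*_{21}$ is a subtree of $T^*_2$ containing the two vertices $x$ and $a^{11}_1$, it contains the $T^*_2$-path between them; as $a$ and $b$ have degree one in $T^*_2$ and differ from $x$ and from $a^{11}_1$, that path avoids $a$ and $b$, hence uses neither $xa$ nor $xb$, so it is exactly $T_2[x,a^{11}_1]$, and thus $E(T_2[x,a^{11}_1])\subseteq E(T^*_{21})$. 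The symmetric argument applied to $\vv{C}^{00}_2(a)\in\mathcal{C}^2$ gives $E(T_2[x,a^{00}_2])\subseteq E(T^*_{22})$, and intersecting yields $E(Q_1)\subseteq E(T^*_{21})\cap E(T^*_{22})$.

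Running this argument with the pairs $(\vv{C}^{10}_1(a),\vv{C}^{01}_2(a))$, $(\vv{C}^{00}_1(b),\vv{C}^{11}_2(b))$ and $(\vv{C}^{10}_1(b),\vv{C}^{01}_2(b))$ gives $E(Q_2),E(Q_3),E(Q_4)\subseteq E(T^*_{21})\cap E(T^*_{22})$, which together with the first paragraph and the identity $E(T^*_{21})\cap E(T^*_{22})=E(T^*_{21}\cap T^*_{22})$ is the claim. I expect no genuine obstacle here; the only point requiring care — the reason the statement is merely ``obviously true'' rather than wholly trivial — is keeping track of which dicycle in each pair meets $T^*_{21}$ (the $\mathcal{C}^1$ one) and which meets $T^*_{22}$ (the $\mathcal{C}^2$ one), and confirming that every $a^{ij}_t$ and $b^{ij}_t$ is actually a vertex of $T_2$ so that $T_2[x,\cdot]$ is defined, which is precisely the bookkeeping inherited from Claim~\ref{claim2-six cycles}.
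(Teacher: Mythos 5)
Your argument is correct and is exactly the reasoning the paper leaves implicit: it states Claim~\ref{claim-Q1-Q4} as ``obviously true,'' and the one-line justification it gives for the analogous Claim~\ref{claim-Q1Q2} in Section~4.2 is precisely your observation that each $Q_r$ lies on a path of $T^*_{21}$ (from $x$ to the relevant $a^{ij}_1$ or $b^{ij}_1$) and simultaneously on a path of $T^*_{22}$, while $xa$ and $xb$ are pendant edges of both subtrees because $a$ and $b$ are leaves of each. Your added care about $a$ and $b$ having degree one in $T^*_2$ and about each $a^{ij}_t$, $b^{ij}_t$ being a genuine leaf of $T_2$ (via \ref{C3} and the analogue of Claim~\ref{claim2-six cycles}) just fills in the bookkeeping the authors omit.
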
 
	
	By \ref{C3} and \ref{C4} for $\C^1$ and the construction of $\C^2$, Claim~\ref{claim2-six cycles} also holds here. In addition, we have the following.
	\begin{claimC}\label{claim-8 cycles}
		The following statements hold.
		\begin{enumerate}[label= {(\arabic*)}]
			\item $\vv{C}^{10}_2 (b) = \vv{T^*_2}[v^{k-1}_1, b] \cup \cevv{L'}[u^k_{p_k},b] \cup \vv{T_1}[u^k_{p_k}, u^{k-1}_{p_{k-1}}] \cup \vv{L'}[u^{k-1}_{p_{k-1}}, v^{k-1}_1]$. (Hence $b^{10}_2=v^{k-1}_1$) 
			\item $\vv{C}^{11}_2 (b) = \vv{T^*_2}[b, v^1_1] \cup \cevv{L'}[u^1_{p_1}, v^1_1] \cup \vv{T_1}[u^1_{p_1}, u^k_{p_k}] \cup \vv{L'}[u^k_{p_k},b]$. (Hence $b^{11}_2=v^1_1$) 
		\end{enumerate}
	\end{claimC}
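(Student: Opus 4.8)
The plan is to recognize $\vv{C}^{10}_2(b)$ and $\vv{C}^{11}_2(b)$ as two of the dicycles that are put into $\C^2$ when it is built following the basic step $s=2$ (with $T^*_{22}$ playing the role of $T_2$ and $L'$ that of $L$), exactly in the spirit in which statements (5) and (6) of Claim~\ref{claim2-six cycles} were handled. First I would fix the $s=2$ labeling in the present situation. Since $a$ lies between $w$ and $u^1_1$ while $b$ lies between $u^k_{p_k}$ and $z$, and since in $G^*_2$ the attachments of $T^*_{22}$ are precisely $a$, the groups $v^i_1,\dots,v^i_{q_i}$ for $1\le i\le k-1$, and $b$, with no attachment of $G^*_2$ interior to $L'[b,a]$, the construction is run with the $k$-th $T^*_{22}$-group equal to $(b,a)$; that is, $q_k=2$, $v^k_1=b$ and $v^k_2=a$. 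Statement (5) of Claim~\ref{claim2-six cycles}, which records $a=v^k_{q_k}$, already confirms this identification.

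With this labeling, the dicycles of class \ref{s23} of the $s=2$ construction that pass through $b=v^k_1$ are exactly those for $i=k-1$ and $i=k$. Substituting $v^k_1=b$, $T_2=T^*_{22}$, $L=L'$ into the formula $\vv{T_2}[v^i_1,v^{i+1}_1]\cup\cevv{L}[u^{i+1}_{p_{i+1}},v^{i+1}_1]\cup\vv{T_1}[u^{i+1}_{p_{i+1}},u^i_{p_i}]\cup\vv{L}[u^i_{p_i},v^i_1]$ (superscripts mod $k$) yields, for $i=k-1$, precisely the right-hand side of statement (1), and, for $i=k$, precisely that of statement (2); here one uses that a dipath of $T^*_2$ joining two leaves of $T^*_{22}$ stays inside $T^*_{22}$, so $\vv{T^*_2}[v^{k-1}_1,b]=\vv{T^*_{22}}[v^{k-1}_1,b]$ and $\vv{T^*_2}[b,v^1_1]=\vv{T^*_{22}}[b,v^1_1]$. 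Both are genuine dicycles because the $s=2$ construction produces only dicycles.

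It then remains to read off the types at $b$. Let $b^-$ be the predecessor of $b$ on $L'$, and recall $x$ is the unique neighbour of $b$ in $T^*_2$. In the $i=k-1$ dicycle the arc of $\vv{T^*_2}[v^{k-1}_1,b]$ incident to $b$ is $(x,b)$ and the arc of $\cevv{L'}[u^k_{p_k},b]$ incident to $b$ is $(b,b^-)$, so it is of type $10$ with respect to $b$; in the $i=k$ dicycle the arc of $\vv{T^*_2}[b,v^1_1]$ at $b$ is $(b,x)$ and the arc of $\vv{L'}[u^k_{p_k},b]$ at $b$ is $(b^-,b)$, so it is of type $11$ with respect to $b$. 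As $\C^2$ is feasible for $(G^*_2,L')$, condition \ref{C4} forces each type at $b$ to be realized by a unique dicycle of $\C^2$, so these two dicycles are $\vv{C}^{10}_2(b)$ and $\vv{C}^{11}_2(b)$. Finally, the only $T^*_2$-attachments on the first dicycle are $b$ and $v^{k-1}_1$, and on the second they are $b$ and $v^1_1$ (in each case every other attachment met lies on $T_1$ or on an arc-free stretch of $L'$), which gives $b^{10}_2=v^{k-1}_1$ and $b^{11}_2=v^1_1$.

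The one place that needs genuine care — and the step I would double-check — is this bookkeeping: that in the present case $b$ and $a$ form a length-two $T^*_{22}$-group $v^k_1,v^k_2$ (unlike in Section~\ref{sec-case 2}, where the terminal group could be longer), and that the orientations ($\cevv{L'}$ versus $\vv{L'}$) on the arcs incident to $b$ come out so as to give types $10$ and $11$ rather than $00$ and $01$; the latter two types are already realized at $b$ by the class \ref{s22} dicycles $\vv{T^*_{22}}[a,b]\cup\vv{L'}[b,a]$ and $\vv{T^*_{22}}[b,a]\cup\cevv{L'}[b,a]$, so all four types at $b$ are consistently accounted for, which is a useful sanity check.
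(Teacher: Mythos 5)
Your proposal is correct and follows exactly the route the paper intends: the paper states Claim 3b without proof, implicitly appealing to the construction of $\mathcal{C}^2$ via the $s=2$ basic step (as it did for items (3)--(6) of Claim 2b), and your argument supplies precisely that verification — identifying the $k$-th $T^*_{22}$-group as $(v^k_1,v^k_2)=(b,a)$, recognizing the two dicycles as the class \ref{s23} dicycles for $i=k-1$ and $i=k$, and reading off types $10$ and $11$ at $b$ together with the uniqueness from \ref{C4}. Your sanity check that types $00$ and $01$ at $b$ are realized by the class \ref{s22} dicycles is also consistent with items (3) and (4) of Claim 2b.
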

	
	Thus by Claim~\ref{claim2-six cycles} and Claim~\ref{claim-8 cycles}, $a^{00}_2=v^{k-1}_{q_{k-1}}$, $a^{01}_2=v^1_{q_1}$, $b^{10}_2=v^{k-1}_1$ and $b^{11}_2=v^1_1$. Let $T_a$ be the subtree of $T_2$ whose leaves are precisely $\{a^{00}_2, a^{01}_2, a^{10}_1, a^{11}_1\}$, and let $T_b$ be the subtree of $T_2$ whose leaves are precisely $\{b^{00}_1, b^{01}_1, b^{10}_2, b^{11}_2\}$. We claim the following.
	\begin{claimC}
		\label{Q1capQ2}
		$E(Q_1) \cap E(Q_2) \cap E(T_a)= \emptyset$ and $E(Q_3) \cap E(Q_4) \cap E(T_b)= \emptyset$. In particular,
		if $x$ is in $T_a$, then $E(Q_1) \cap E(Q_2) = \emptyset$. Similarly, if $x$ is in $T_b$, then $E(Q_3) \cap E(Q_4) = \emptyset$.
	\end{claimC}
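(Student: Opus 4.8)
The plan is to reduce Claim~\ref{Q1capQ2} to a general statement about trees and then prove that. Recall that $a^{11}_1,a^{10}_1,a^{00}_2,a^{01}_2$ are attachments (hence vertices) of $T_2$, that $Q_1=T_2[x,a^{11}_1]\cap T_2[x,a^{00}_2]$ and $Q_2=T_2[x,a^{10}_1]\cap T_2[x,a^{01}_2]$ are subpaths of $T_2$ having $x$ as one endpoint, and that $T_a$ is the subtree of $T_2$ spanned by $\{a^{00}_2,a^{01}_2,a^{10}_1,a^{11}_1\}$, i.e.\ the union of the pairwise paths between these four vertices; by this minimality every leaf of $T_a$ lies in this set, and for each $v\in V(T_a)$ every component of $T_a-v$ contains at least one of the four vertices. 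The statement I will prove is: \emph{for any tree $T$, vertex $x$ of $T$, and vertices $p_1,p_2,p_3,p_4$ of $T$ with spanned subtree $T'$,}
\[
E\bigl(T[x,p_1]\cap T[x,p_2]\bigr)\cap E\bigl(T[x,p_3]\cap T[x,p_4]\bigr)\cap E(T')=\emptyset .
\]
Specializing to $(T,T',p_1,p_2,p_3,p_4)=(T_2,T_a,a^{11}_1,a^{00}_2,a^{10}_1,a^{01}_2)$ gives the first equality of the claim, and the analogous specialization using the $b$-analogues of these quantities and $T'=T_b$ gives the second.

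To prove the displayed identity I would introduce $\pi$, the vertex of $T'$ nearest to $x$ in $T$, which satisfies $T[x,v]=T[x,\pi]\cup T'[\pi,v]$ and $E(T[x,\pi])\cap E(T')=\emptyset$ for every $v\in V(T')$. Then $T[x,p_1]\cap T[x,p_2]=T[x,m_{12}]$, where $m_{12}$ is the median of $x,p_1,p_2$ (the unique vertex on all three pairwise paths); since $p_1,p_2\in V(T')$ this median also equals the median of $\pi,p_1,p_2$ taken inside $T'$, so $m_{12}\in V(T')$ and intersecting with $E(T')$ leaves exactly $E\bigl(T'[\pi,m_{12}]\bigr)$. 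Defining $m_{34}$ likewise, the left side of the identity becomes the common initial segment $E\bigl(T'[\pi,m_{12}]\bigr)\cap E\bigl(T'[\pi,m_{34}]\bigr)$ of two paths of $T'$ leaving $\pi$, which is nonempty only if $m_{12}\neq\pi\neq m_{34}$ and $m_{12},m_{34}$ lie in a single component $U$ of $T'-\pi$. But $m_{12}\neq\pi$ forces $p_1\neq\pi\neq p_2$ and places $p_1,p_2$ (and $m_{12}$) in $U$, and $m_{34}\neq\pi$ similarly places $p_3,p_4$ in $U$; then $\pi$ is not a leaf of $T'$, so $T'-\pi$ has at least two components, each of which contains some $p_i$ by minimality of $T'$ — contradicting that all four $p_i$ lie in $U$. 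This proves the identity, and hence both displayed equalities of Claim~\ref{Q1capQ2}.

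For the ``in particular'' part, when $x\in V(T_a)$ the gateway $\pi$ equals $x$, and since $x$ together with the four spanning vertices all lie in the subtree $T_a$, the paths $Q_1$ and $Q_2$ are contained in $T_a$; hence $E(Q_1)\cap E(Q_2)=E(Q_1)\cap E(Q_2)\cap E(T_a)=\emptyset$, and the same reasoning gives the case $x\in V(T_b)$. The step that will need the most care is the reduction itself — verifying that the endpoints defining $Q_1,Q_2$ (resp.\ $Q_3,Q_4$) are exactly the vertices that span $T_a$ (resp.\ $T_b$), so that intersecting with $E(T_a)$ really is the same as truncating the two paths at $\pi$. After that, the median-and-connectivity argument above is routine, and I do not expect any genuine obstacle.
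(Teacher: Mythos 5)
Your proof is correct and rests on the same idea as the paper's: an edge common to $Q_1$ and $Q_2$ lies on all four paths $T_2[x,a^{11}_1]$, $T_2[x,a^{00}_2]$, $T_2[x,a^{10}_1]$, $T_2[x,a^{01}_2]$, hence would leave all four spanning vertices on one side of a separation of $T_a$, contradicting the minimality of $T_a$; the paper runs this contradiction directly on the components of $T_a-e$, while you run it on the components of $T_a-\pi$ via the gateway vertex and medians, but the substance is identical. As for the reduction step you flagged: the endpoints of $Q_1,Q_2$ do coincide exactly with the leaf set of $T_a$, and the apparent mismatch on the $b$-side is an index typo in the paper's definition of $Q_4$, which should read $T_2[x,b^{01}_1]\cap T_2[x,b^{10}_2]$ (consistent with $C^{01}_1(b)\cap C^{10}_2(b)=xb\cup Q_4$ and with the leaf set of $T_b$), after which your general lemma applies verbatim.
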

	\begin{proof}
		Assume to the contrary, $e \in E(Q_1) \cap E(Q_2)$ and also $e \in E(T_a)$. Then $T_a-e$ contains two components, say $T_{a1}$ and $T_{a2}$. Let $y$ be the vertex in $Q_1 \cap Q_2 \cap T_a$ that is closest to $x$ along $Q_1$. Clearly, $y$ is also in $Q_2$ for otherwise, there is a cycle in $T_2$.  Without loss of generality, assume $y \in V(T_{a1})$. By the definition of $Q_1$ and $Q_2$, it must be that all of $a^{00}_2, a^{01}_2, a^{10}_1, a^{11}_1$ are contained in $T_{a2}$ as $e \in \in E(Q_1) \cap E(Q_2)$. However, by definition, $T_a$ is the minimal subtree of $T_2$ containing all of these four vertices. This implies that $T_a \subseteq T_{a2}$ and hence does not contain $y$, which contradicts the assumption that $y \in V(T_{a1})$.
		
		Similarly, we can prove $E(Q_3) \cap E(Q_4) \cap E(T_b)= \emptyset$.
		
		Note that if $x$ is in $T_a$, and $E(Q_1) \cap E(Q_2) \neq \emptyset$, then $E(Q_1) \cap E(Q_2) \cap E(T_a) \neq \emptyset$, a contradiction. Similarly, we can show that if $x$ is in $T_b$, then $E(Q_3) \cap E(Q_4) = \emptyset$.
	\end{proof} 
	
	\begin{claimC}
		\label{claim-at most 2}
		Each edge in $E(G)$ is contained in at most two of $Q_1, Q_2, Q_3, Q_4$.
	\end{claimC}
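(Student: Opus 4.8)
The plan is to read off, from Claims~\ref{claim2-six cycles} and~\ref{claim-8 cycles}, exactly which attachments the four paths $Q_1,Q_2,Q_3,Q_4$ run towards, and then to observe that $Q_4$ is essentially trivial. Each $Q_i$ is an intersection of two paths of the form $T_2[x,\cdot]$ (or $T^*_2[x,\cdot]$), hence is itself a path starting at $x$, possibly a single vertex, and for an edge $e$ one has $e\in E(Q_i)$ precisely when $e$ lies on both of the two defining paths. Claims~\ref{claim2-six cycles} and~\ref{claim-8 cycles} give $a^{00}_2=v^{k-1}_{q_{k-1}}$, $a^{01}_2=v^1_{q_1}$, $b^{11}_2=v^1_1$, $b^{10}_2=v^{k-1}_1$, and they identify $\vv{C}^{10}_1(b)$ and $\vv{C}^{01}_2(b)$ as dicycles whose only arcs off $L'$ are $(a,x),(x,b)$, respectively $(b,x),(x,a)$; hence $b^{10}_1=b^{01}_2=a$, so $E(Q_4)=\{xa\}$, a single edge of $T^*_2$. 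Since $Q_1,Q_2,Q_3$ are genuine subgraphs of $T_2$ (their defining endpoints $a^{11}_1,a^{10}_1,v^{k-1}_{q_{k-1}},v^1_{q_1},v^1_1$ and $b^{00}_1$ all lie in $T_2$), the edge $xa$ lies in none of them; therefore an edge of $G$ lies in at least three of $Q_1,\dots,Q_4$ only if it lies in $Q_1\cap Q_2\cap Q_3$, and to rule that out it suffices to prove $E(Q_1)\cap E(Q_2)=\emptyset$.

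For this key step I would use $E(Q_1)\subseteq E(T_2[x,a^{00}_2])=E(T_2[x,v^{k-1}_{q_{k-1}}])$ and $E(Q_2)\subseteq E(T_2[x,a^{01}_2])=E(T_2[x,v^1_{q_1}])$. When $v^1_{q_1}\neq v^{k-1}_{q_{k-1}}$, the choice of $x$ puts $x$ on the path $T_2[v^1_{q_1},v^{k-1}_{q_{k-1}}]$; splitting that path at $x$ shows $T_2[x,v^1_{q_1}]$ and $T_2[x,v^{k-1}_{q_{k-1}}]$ share no edge, whence $E(Q_1)\cap E(Q_2)=\emptyset$. Equivalently, through Claim~\ref{Q1capQ2}: a common edge $e$ of $Q_1$ and $Q_2$ lies outside $E(T_a)$ yet on $T_2[x,a^{00}_2]$ with $a^{00}_2\in V(T_a)$, so $e$ separates $x$ from the connected subtree $T_a$; then $v^1_{q_1}$ and $v^{k-1}_{q_{k-1}}$, being leaves of $T_a$, both lie in the component of $T_2-e$ avoiding $x$, once more contradicting $x\in T_2[v^1_{q_1},v^{k-1}_{q_{k-1}}]$.

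The remaining case is $v^1_{q_1}=v^{k-1}_{q_{k-1}}$, which (since these two lie in different attachment blocks along $L[u,v]$ unless there is only one such block) forces $k=2$; here $x$ is chosen inside the subtree spanning $\{v^1_1,v^1_{q_1}\}$ with maximum degree, or is the neighbour of $v^1_{q_1}$ if even $v^1_1=v^1_{q_1}$. I would treat this directly, using that $a^{11}_1$ and $a^{10}_1$ are attachments of $T^*_{21}$ and hence $T_2$-attachments on the arc $L'[b,a]$, so both differ from $v^1_{q_1}\in L[u,v]$; together with the ``in particular'' clause of Claim~\ref{Q1capQ2} (after checking that the maximal-degree vertex $x$ lands in $T_a$) this should force $E(Q_1)\cap E(Q_2)=\emptyset$, often because $Q_1$ and $Q_2$ are already edgeless. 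I expect this degenerate sub-case to be the only delicate point — one must confirm that the chosen $x$ always sits in $T_a$ there (equivalently, that from $x$ the directions towards $a^{11}_1$ and towards $a^{10}_1$ both differ from the direction towards $v^1_{q_1}$); the rest is routine bookkeeping with the endpoint identifications above.
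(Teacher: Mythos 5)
Your reduction hinges on reading the displayed definition of $Q_4$ literally as $T_2[x,b^{10}_1]\cap T_2[x,b^{01}_2]$ and then computing $b^{10}_1=b^{01}_2=a$ from Claim~\ref{claim2-six cycles}, so that $Q_4$ degenerates to the single edge $xa$. That definition is an index transposition in the source: the $Q_4$ the claim must actually control is $T_2[x,b^{01}_1]\cap T_2[x,b^{10}_2]$, as forced by the identity $C^{01}_1(b)\cap C^{10}_2(b)=xb\cup Q_4$ used in the final count for \ref{C1}, by the construction of $\vv{C}^{\text{new}}_4$ from $\vv{C}^{01}_1(b)\triangle\vv{C}^{10}_2(b)$, and by the leaf set $\{b^{00}_1,b^{01}_1,b^{10}_2,b^{11}_2\}$ of $T_b$ (note also that $T_2[x,a]$ is not even defined, since $a\notin V(T_2)$). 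This corrected $Q_4$ is a genuine path in $T_2$ from $x$ towards $T_b$, on a par with $Q_3$. Consequently your key reduction --- ``an edge lies in three of the $Q_i$ only if it lies in $Q_1\cap Q_2\cap Q_3$, so $E(Q_1)\cap E(Q_2)=\emptyset$ suffices'' --- collapses: a priori an edge could lie in $Q_1\cap Q_3\cap Q_4$ or $Q_2\cap Q_3\cap Q_4$. Excluding that is where the real work lies, and your argument never addresses the interaction between the $a$-side paths $Q_1,Q_2$ and the $b$-side paths $Q_3,Q_4$. The paper handles it by splitting on whether $x\in T_b$: if so, $E(Q_3)\cap E(Q_4)=\emptyset$ by Claim~\ref{Q1capQ2}; if not, $Q_3\cap Q_4$ equals the connecting path $Q$ between $T_2[a^{00}_2,a^{01}_2]$ and $T_2[b^{10}_2,b^{11}_2]$, which is edge-disjoint from $Q_1\cup Q_2$.

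Two smaller points. Your argument for $E(Q_1)\cap E(Q_2)=\emptyset$ when $v^1_{q_1}\neq v^{k-1}_{q_{k-1}}$ (split $T_2[v^1_{q_1},v^{k-1}_{q_{k-1}}]$ at $x$ and use $Q_1\subseteq T_2[x,v^{k-1}_{q_{k-1}}]$, $Q_2\subseteq T_2[x,v^1_{q_1}]$) is correct and is essentially the paper's step ``$x\in T_a$, hence $E(Q_1)\cap E(Q_2)=\emptyset$ by Claim~\ref{Q1capQ2}''. But the degenerate case $k=2$ is left as an expectation rather than a proof; there the paper argues differently ($a^{00}_2=a^{01}_2$ collapses $T_a$ to at most three leaves so that one of $Q_1,Q_2$ is empty, and separately $x\in T_2[v^1_1,v^{k-1}_1]\subseteq T_b$ gives $E(Q_3)\cap E(Q_4)=\emptyset$), and even in this case the conclusion still needs the $b$-side disjointness that your approach omits.
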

	\begin{proof}
		First assume that $v^{k-1}_{q_{k-1}} = v^1_{q_1}$, it follows that $a^{00}_2=a^{01}_2$ (by the fifth and sixth items of Claim~\ref{claim2-six cycles}), hence $T_a$ has at most three leaves. Thus at least one of $Q_1$ and $Q_2$ is empty. Without loss of generality, assume $Q_1=\emptyset$. If $v^1_1 = v^1_{q_1} = v^{k-1}_1 = v^{k-1}_{q_{k-1}}$, then it is easy to see that $Q_1 = Q_2 = Q_3 = Q_4 = \emptyset$, the claim is obviously true. Otherwise, by the choice of $x$, we know that $x$ must lie in $T_2[v^1_1, v^{k-1}_1]$ and hence in $T_b$. By Claim~\ref{Q1capQ2}, $E(Q_3) \cap E(Q_4) = \emptyset$. Together with the fact that $Q_1=\emptyset$, the claim holds.
		
		Assume that $v^{k-1}_{q_{k-1}} \neq v^1_{q_1}$. By the choice of $x$, we have that $x$ is in $T_2[v^1_{q_1}, v^{k-1}_{q_{k-1}}] = T_2[a^{01}_2, a^{01}_2]$, hence in $T_a$. By Claim~\ref{Q1capQ2}, $E(Q_1) \cap E(Q_2) = \emptyset$. 
		
		If $x$ is also in $T_b$, then again by Claim~\ref{Q1capQ2}, $E(Q_3) \cap E(Q_4) = \emptyset$. Thus there is no edge could be in three of $E(Q_1), E(Q_2), E(Q_3), E(Q_4)$, the claim follows. 
		
		Assume that $x$ is not contained in $T_b$. Then, by the choice of $x$, the subpaths $T_2[a^{00}_2, a^{01}_2]$ and $T_2[b^{10}_2, b^{11}_2]$ are vertex-disjoint, see Fig.~\ref{fig:case3} for illustration. Since $T_2$ is a tree, there exists a unique path $Q$ in $T_2$ joining these two subpaths. By the choice of $x$, it must be one endpoint of $Q$, and we denote the other endpoint by $y$. Clearly, $y$ is in $T_2[b^{10}_2, b^{11}_2]$, and hence it is also in $T_b$. Most importantly, we know that $Q_3 \cap Q_4 = Q$ by Claim~\ref{Q1capQ2} and the definition of $Q_3, Q_4, Q$. 
		
		By the choice of $x$,  both $Q_1 \subset T_2[a^{00}_2, a^{01}_2]$ and $Q_2 \subset T_2[a^{00}_2, a^{01}_2]$, but $Q$ is edge-disjoint to $T_2[a^{00}_2, a^{01}_2]$, so we have 
		$$E(Q_3) \cap E(Q_4) \cap (E(Q_1) \cup E(Q_2)) =  E(Q) \cap (E(Q_1) \cup E(Q_2)) = \emptyset.$$ 
		Together with the fact that $E(Q_1) \cap E(Q_2) = \emptyset$, the claim follows.
	\end{proof}
	
	Now we shall construct $\C$ that is feasible for $(G,L)$. Let $$\mathcal{C} = \left(\mathcal{C}^1\cup \mathcal{C}^2\setminus [\mathcal{C}^1(a)\cup\mathcal{C}^2(a) \cup \mathcal{C}^1(b)\cup\mathcal{C}^2(b)]\right) \cup \{\vv{C}^{\text{new}}_1, \vv{C}^{\text{new}}_2, \vv{C}^{\text{new}}_3, \vv{C}^{\text{new}}_4\}, $$ where $\vv{C}^{\text{new}}_1$ and $\vv{C}^{\text{new}}_2$ are defined as last subsection, $\vv{C}^{\text{new}}_3$ and $\vv{C}^{\text{new}}_4$ are defined as follows.
	\begin{itemize}
		\item $\vv{C}^{\text{new}}_3$ is obtained from $\vv{C}^{00}_1(b) \triangle \vv{C}^{11}_2(b)$ by contracting $z^- \rightarrow b \rightarrow z$ to $z^- \rightarrow z$.
		\item $\vv{C}^{\text{new}}_4$ is obtained from $\vv{C}^{01}_1(b) \triangle \vv{C}^{10}_2(b)$ by contracting $z \rightarrow b \rightarrow z^-$ to $z \rightarrow z^-$.
	\end{itemize}

	The verification of $\mathcal{C}$ satisfying \ref{C2}-\ref{C4} is analogous to the last subsection, except that there are additional dicycles $\vv{C}^{\text{new}}_3$ and $\vv{C}^{\text{new}}_4$, which need to be verified for \ref{C2}-\ref{C3}, and one additional vertex $b$ to be considered for \ref{C4}. We therefore omit the repeated arguments. We can also deduce that $|\C_e|=2$ for $e \in E(L)$ and $|\mathcal{C}_e| \geq 4$ for each $e \in E(G) \setminus [E(L) \cup E(T^*_{21} \cap T^*_{22})]$ as in that part. Thus it remains to verify \ref{C1} for the edges in $\left[E(T^*_{21} \cap T^*_{22})\right] \setminus \{xa,xb\}$.

	By Claims~\ref{claim2-six cycles},~\ref{claim-8 cycles}, and the definition of $Q_1, Q_2, Q_3, Q_4$, we have the following.
	\begin{align*}
		C^{11}_1(a) \cap C^{00}_2(a) &= xa \cup Q_1, \quad C^{10}_1(a) \cap C^{01}_2(a)  = xa \cup Q_2, \\
		C^{00}_1(b) \cap C^{11}_2(b) &= xb \cup Q_3, \quad C^{01}_1(b) \cap C^{10}_2(b)  = xb \cup Q_4.
	\end{align*} 
	Therefore, by Claims~\ref{claim-Q1-Q4},~\ref{claim-at most 2}, $|\C_e| \geq	|\C^1_e|+|\C^2_e|-4 \geq 4$ for each edge $e \in \left[E(T^*_{21} \cap T^*_{22})\right] \setminus \{xa,xb\}$. 
	
	This completes the proof of Theorem~\ref{main-thm2}. \qed

	\section{Concluding remarks} 
	
	As shown in Voss~\cite{Voss1991}, the case $k \le 3$ of Conjecture~\ref{main-conj} 
	provides a useful estimate of the number of edges in certain subgraphs of the 
	bridges of a longest cycle, which has proved valuable in many problems on cycles.
	Hence, we would expect that resolving this conjecture (i.e., Theorem~\ref{thm:main}) may lead to further applications in the study of cycles.

	One potential application of Theorem~\ref{thm:main} concerns the size of the intersection 
	of two longest cycles in highly connected graphs. A well-known conjecture 
	(see \cite{GGL,Grotschel1984}), often attributed to Scott Smith, asserts the following.
	
	\begin{conjecture}\label{conj-intersection}
		In $k$-connected graphs, any two longest cycles intersect in at least $k$ vertices.
	\end{conjecture}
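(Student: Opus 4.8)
The plan is to derive Conjecture~\ref{conj-intersection} from Theorem~\ref{thm:main} by a cut-and-paste argument comparing two longest cycles, turning the intersection bound into the bridge-length inequality. I would argue by contradiction: suppose $G$ is $k$-connected, $L$ and $C$ are longest cycles, and $t:=|V(L)\cap V(C)|\le k-1$. The common vertices $v_1,\dots,v_t$ cut $C$ into arcs $P_1,\dots,P_t$ (consecutive along $C$); by the choice of the $v_i$ the interior of each $P_i$ is disjoint from $V(L)$, since an interior vertex on $L$ would be another common vertex. Because $L$ is longest, for every nontrivial $P_i$ with ends $a,b$ the cycle $P_i\cup L[a,b]$ has at most $|E(L)|$ edges, so $|E(P_i)|\le\min\{|E(L[a,b])|,\ |E(L[b,a])|\}$; summing this over $i$, together with $|E(C)|=|E(L)|$ and hence $|E(C)\setminus E(L)|=|E(L)\setminus E(C)|$, controls how the portion of $C$ lying off $L$ must be charged against the arcs of $L$.

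Next I would localize $C$ inside the bridges of $L$. Each nontrivial $P_i$ has connected interior, hence lies in a single $L$-bridge; let $B_1,\dots,B_m$ be the distinct $L$-bridges that so arise, so $E(C)\setminus E(L)\subseteq\bigcup_j E(B_j)$ and every attachment of $B_j$ met by $C$ is one of the $v_i$. The heart of the reduction is to show $|E(C)\cap E(B_j)|\le\lambda(B_j)$ for each $j$: indeed $C\cap B_j$ is a union of paths of $C$ whose ends are attachments of $B_j$, and, using that $B_j$ is connected, one extends it to a tree in $B_j$ whose leaves are exactly the attachments — the only delicate case being $m=1$ with $C$ using no edge of $L$, where $C$ lies almost entirely inside one bridge. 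Granting this, if the $B_j$ are pairwise vertex-disjoint and induce a tree in $O_G(L)$, Theorem~\ref{thm:main} gives $\sum_j\lambda(B_j)\le\lfloor|E(L)|/2\rfloor$, whence $|E(C)\setminus E(L)|=\sum_j|E(C)\cap E(B_j)|\le\lfloor|E(L)|/2\rfloor$, so $C$ traverses at least $\lceil|E(L)|/2\rceil$ edges of $L$; feeding this back into the $t$-arc partition of $L$ and the per-segment bound from the first paragraph should force $t\ge k$. Here $k$-connectivity enters through the fact that $V(L)\cap V(C)$ is not a separator, so that $G$ minus these $t$ vertices is connected — information I would use both to restrict which bridges can carry segments of $C$ and, crucially, to replace a bad family $\{B_j\}$ by a disjoint subfamily with tree-like overlap at a controlled cost.

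The main obstacle is exactly this last surgery. In general the $L$-bridges through which $C$ passes are neither disjoint nor tree-like in $O_G(L)$, and the Petersen example following Conjecture~\ref{main-conj} shows the tree hypothesis of Theorem~\ref{thm:main} is genuinely needed; so one must really use $t<k$ to exclude cyclic overlaps and shared attachments, or to absorb them into the count. A second, directional, difficulty is that Theorem~\ref{thm:main} bounds $\sum_j\lambda(B_j)$ from above while the Smith conjecture bounds $t$ from below, so I expect one will also need a stability analysis of Theorem~\ref{thm:main} — a description of the configurations with $\sum_j\lambda(B_j)$ close to $|E(L)|/2$ — to close the gap, especially when the arcs of $L$ between consecutive $v_i$ are badly unbalanced. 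I would organize the whole argument as a simultaneous induction on $t$ and on the overlap structure, invoking Theorem~\ref{thm:main} at the base and performing the connectivity-driven surgery at each step.
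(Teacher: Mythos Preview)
The paper does not prove Conjecture~\ref{conj-intersection}. It appears in the concluding remarks as an \emph{open} conjecture (attributed to Scott Smith), mentioned only as a potential direction where Theorem~\ref{thm:main} might eventually be useful. There is no proof in the paper to compare against.

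Your proposal is therefore an attack on an open problem, and it does not close. You yourself identify the two fatal gaps. First, the bridges $B_1,\dots,B_m$ carrying the off-$L$ arcs of $C$ need not be pairwise vertex-disjoint (they can share attachments among the $v_i$) and need not induce a tree in $O_G(L)$; both hypotheses are essential to Theorem~\ref{thm:main}, as the Petersen example in the paper shows. Your suggestion to ``replace a bad family $\{B_j\}$ by a disjoint subfamily with tree-like overlap at a controlled cost'' using $k$-connectivity is exactly the missing idea, not a step that has been carried out. Second, even granting $|E(C)\setminus E(L)|\le \lfloor |E(L)|/2\rfloor$, the concluding sentence ``feeding this back \dots\ should force $t\ge k$'' is a hope, not an argument: nothing in the per-arc bounds $|E(P_i)|\le \min\{|E(L[a,b])|,|E(L[b,a])|\}$ together with $|E(C)\cap E(L)|\ge \lceil |E(L)|/2\rceil$ produces a lower bound on $t$ in terms of $k$, and indeed $k$-connectivity has not been used in any quantitative way beyond noting that the $t$ common vertices do not separate $G$.

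There is also a smaller issue you flag but do not resolve: the inequality $|E(C)\cap E(B_j)|\le \lambda(B_j)$ is not automatic. The subgraph $C\cap B_j$ is a disjoint union of paths with endpoints among the attachments, but to bound its size by $\lambda(B_j)$ you need to embed it in a tree in $B_j$ whose leaf set is \emph{exactly} the attachment set of $B_j$; if some attachment of $B_j$ is not visited by $C$, you must add edges to reach it, and the resulting tree may have more edges than $|E(C)\cap E(B_j)|$ --- which is fine --- but you must also check that the paths of $C\cap B_j$ do not already force a cycle when connected inside $B_j$, which they can if $B_j$ is not itself a tree. In short, the proposal is a reasonable heuristic outline for why Theorem~\ref{thm:main} is relevant to Smith's conjecture, which is precisely what the paper's concluding remarks suggest, but it is far from a proof.
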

	
	\noindent There has been extensive research on this conjecture; see 
	\cite{CFG1998,Grotschel1984,shabbir2013} and recent results \cite{GLSTY2024,MZ25}. 
	Let $C$ and $D$ be two longest cycles in a 2-connected graph $G$ (so that they intersect 
	in at least two vertices), and let $H = C \cup D$ denote the 2-connected subgraph formed by their union. 
	The $C$-bridges of $H$ are all subpaths of $D$, and existing approaches to Conjecture~\ref{conj-intersection} 
	often analyze how these bridges are arranged along $C$. 
	In this context, Conjecture~\ref{conj-intersection} aligns closely with the essence of Conjecture~\ref{main-conj}, 
	and our main result may provide a useful tool for studying this structure in more detail.
	This conjecture is also closely related to the famous conjecture of Lov{\'a}sz 
	on the circumference of vertex-transitive graphs 
	(see \cite{Babai1979,DeVos2023,GLSTY2024,MZ25,NSTW25} for details).
	
	Another potential application of Theorem~\ref{thm:main} is related to a problem of Babai~\cite{Babai1979}, 
	which asks about the intersection size of two longest cycles in a 3-connected cubic graph.
	
	\begin{problem}[Babai~\cite{Babai1979}, Problem 2]\label{problem-Babai}
		Let $f(c)$ denote the largest integer with the following property: if a 3-connected graph has circumference $c$, then any two longest cycles of the graph intersect in at least $f(c)$ vertices. 
		Does $f(c) \to \infty$ as $c \to \infty$? 
	\end{problem}
	
	\noindent Suppose $C$ and $D$ are two longest cycles of a 3-connected cubic graph $G$, and let $H = C \cup D$, which is a 2-connected subgraph of $G$. 
	Since $G$ is cubic, all $C$-bridges of $H$, say $B_1,\dots,B_k$, are vertex-disjoint subpaths of $D$, whose union is exactly $E(D) \setminus E(C)$, so that
	\(
	\sum_{i=1}^k \lambda(B_i) = |E(D) \setminus E(C)|.
	\)
	Now suppose that $O_H(C)$ forms a tree. By Theorem~\ref{thm:main}, we then have 
	\(
	\sum_{i=1}^k \lambda(B_i) \le |E(C)|/2.
	\)
	Combining these, we obtain
	\[
	|E(C \cap D)| = |E(D)| - |E(D) \setminus E(C)| = |E(C)| - \sum_{i=1}^k \lambda(B_i) \ge |C|/2.
	\]
	This bound is certainly too strong to hold in general. 
	A more practical approach is to analyze the overlap graph of $O_H(C)$ and attempt to decompose it into small trees, 
	so that Theorem~\ref{thm:main} can be applied to each tree in a meaningful way.

	\section*{Acknowledgments}
	The authors thank Yufan Luo, Zihan Zhou, and Ziyuan Zhao for carefully reading the draft 
	and providing valuable comments that improved the presentation.

	\bibliographystyle{abbrv}
	\bibliography{reference}

\end{document}